\numberwithin{equation}{section} \theoremstyle{plain}
\newtheorem{theorem}{Theorem}[section]
\newtheorem{lemma}{Lemma}[section]
\newtheorem{corollary}{Corollary}[section]
\newtheorem{proposition}{Proposition}[section]
\newtheorem{remark}{Remark}[section]
\numberwithin{equation}{section}
\def\bZ{\mathbb{Z}}
\def\bC{\mathbb{C}}
\def\bN{\mathbb{N}}
\def\bR{\mathbb{R}}
\def\bE{\mathbb{E}}
\def\bP{\mathbb{P}}
\def\bT{\mathrm{Tr}}
\def\br{\mathrm{rank}}
\def\e{\varepsilon}
\begin{document}

\begin{frontmatter}

  \title{Eigenvalue distributions of high-dimensional matrix processes driven by fractional Brownian motion}
    
  \runtitle{Empirical measures of matrices}

  \begin{aug}
    \author{\fnms{~ Jian} \snm{Song}\ead[label=e1]{txjsong@hotmail.com}}
    \and
    \author{\fnms{~ Jianfeng} \snm{Yao~}\ead[label=e2]{jeffyao@hku.hk}}
    \and
    \author{\fnms{~ Wangjun} \snm{Yuan}\ead[label=e3]{ywangjun@connect.hku.hk}}
    
    \affiliation{Shandong University and  The University of Hong Kong}
    \runauthor{J. Song,   J. Yao \& W. Yuan}

    \address{Research Center for Mathematics and Interdisciplinary Sciences, Shandong University, Qingdao, Shandong, 266237, China and School of Mathematics, Shandong University, Jinan, Shandong, 250100, China\\
      \printead{e1}}
    
    \address{
      Department of Statistics and Actuarial Science, 
      The University of Hong Kong\\
      \printead{e2}
    }

    \address{
      Department of Mathematics, 
      The University of Hong Kong\\
      \printead{e3}
    }
  \end{aug}

  \begin{abstract}
In this article, we study high-dimensional behavior of empirical spectral distributions $\{L_N(t), t\in[0,T]\}$   for a class of $N\times N$ symmetric/Hermitian random matrices,  whose entries are generated from the solution of stochastic differential equation driven by fractional Brownian motion with Hurst parameter $H \in(1/2,1)$.  For Wigner-type matrices, we obtain almost sure relative compactness of  $\{L_N(t), t\in[0,T]\}_{N\in\mathbb N}$   in $C([0,T], \mathbf P(\mathbb R))$ following the approach in \cite{Anderson2010}; for Wishart-type matrices, we obtain tightness of $\{L_N(t), t\in[0,T]\}_{N\in\mathbb N}$ on $C([0,T], \mathbf P(\mathbb R))$ by tightness criterions provided in Appendix \ref{subset:tightness argument}.  The limit of $\{L_N(t), t\in[0,T]\}$ as $N\to \infty$ is also characterised.  \end{abstract}

  \begin{keyword}[class=AMS]
    \kwd[Primary ]{60H15,~60F05}
  \end{keyword}

  \begin{keyword}
  \kwd{matrix-valued process}
    \kwd{fractional Brownian motion}
        \kwd{empirical spectral distribution}     \kwd{measure-valued process}\kwd{tightness criterion}

  \end{keyword}

\end{frontmatter}

{
  \hypersetup{linkcolor=black}
  \tableofcontents
}

\section{Introduction} \label{sec-Intro}

There  has been {  increasing  research activity} on matrix-valued
stochastic differential equations (SDEs) in recent years. A prominent
example is the class of generalized Wishart processes introduced in
\cite{Graczyk2013}. This class generalizes classical examples of
symmetric Brownian motion \cite{Dyson62}, the Wishart process
\cite{Bru91}  and the symmetric matrix-valued process whose entries are independent Ornstein-Uhlenbeck processes \cite{Chan1992}. Recent works on generalized Wishart processes include \cite{Song20,Song19} and \cite{Graczyk2014,Graczyk2019}. A common feature of these processes is that they are all driven by independent Brownian motions.

In contrast, the study of SDE matrices driven by fractional Brownian motions (fBm) has a shorter history and less literature. To our best knowledge, the first paper is \cite{Nualart20144266}, where the symmetric fractional Brownian matrix was studied. The SDE for the associated eigenvalues and conditions for non-collision of the eigenvalues were obtained when the Hurst parameter $H > 1/2$ by using fractional calculus and Malliavin calculus. The convergence in distribution of the empirical spectral measure of a scaled symmetric fractional Brownian matrix was established in \cite{Pardo2016} by using Malliavin calculus and a tightness argument. These results were generalized to centered Gaussian process in \cite{Jaramillo2019}. Besides, \cite{Pardo2017} obtained SDEs for the eigenvalues, conditions for non-collision of eigenvalues and the convergence in law of the empirical eigenvalue measure processes for a scaled fractional Wishart matrix (the product of an independent fractional Brownian matrix and its transpose). In the present paper, by  adopting a different approach, we obtain stronger results on the convergence of empirical measures of eigenvalues for a significantly larger class of matrix-valued processes driven by fBms (see Remark \ref{remark-6}).

More precisely, consider the following 1-dimensional SDE
\begin{align} \label{SDE}
	dX_t = \sigma(X_t) \circ dB_t^H + b(X_t)dt,\quad t \ge 0,
\end{align}
with initial value $X_0$ independent of $B^H=(B^H_t)_{t\ge0}$. Here, $B_t^H$ is a fractional Brownian motion with Hurst parameter $H \in (1/2,1)$, and the differential $\circ dB_t^H$ is in the {\em Stratonovich} sense. It has been shown in \cite{Lyons1994} that there is a unique solution to SDE \eqref{SDE} if the coefficient functions $\sigma$ and $b$ have bounded derivatives which are H\"{o}lder continuous of order greater than $1/H-1$.

Let $\{X_{ij}(t)\}_{i,j \ge 1}$ be i.i.d. copies of $X_t$ and $Y^N(t) = \left(Y_{ij}^N(t) \right)_{1 \le i,j \le N}$ be a symmetric $N \times N$ matrix with entries 
\begin{align} \label{matrix entries Wigner}
	Y_{ij}^N(t) =
	\begin{cases}
	\dfrac{1}{\sqrt{N}} X_{ij}(t), & 1 \le i < j \le N, \\
	\dfrac{\sqrt{2}}{\sqrt{N}} X_{ii}(t), & 1 \le i \le N.
	\end{cases}
\end{align}
Let $\lambda_1^N(t) \le \cdots \le \lambda_N^N(t)$ be the eigenvalues of $Y^N(t)$ and
\begin{align}\label{eq-em}
	L_N(t)(dx) = \dfrac{1}{N} \sum_{i=1}^N \delta_{\lambda_i^N(t)}(dx)
\end{align}
be the empirical distribution of the eigenvalues.

In this paper, we aim to study, as the dimension $N$ tends to
infinity, the limiting behavior of the empirical measure-valued
processes $\{L_N(t), t\in[0,T]\}$ defined by \eqref{eq-em} and the
empirical measure-valued processes arising from other related
matrix-valued processes in the space $C([0,T], \mathbf P(\bR))$ of
continuous measure-valued processes, with $\mathbf P(\bR)$ being the
space of probability measures equipped with its weak topology.
{  A summary of the contents of the paper is as follows.}

In Section \ref{sec:Holder}, we study the solution of  SDE \eqref{SDE}. In \cite{Hu2007}, a pathwise upper bound for $|X_t|$ on the time interval $[0,T]$ was obtained under some boundedness and smoothness conditions on the coefficient functions. We adapt the
techniques used in \cite{Hu2007} to study the increments $|X_t - X_s|$ for $t, s \in [0,T]$ and obtain the pathwise H\"{o}lder continuity for
$X_t$. Moreover, under some integrability conditions on the initial
value $X_0$, the H\"{o}lder norm of $X_t$ is shown to be
$L^p$-integrable.  These results improve the previous results
  in \cite{nr02} with  sharper bounds for the increments of the process.

In Section \ref{sec:hdl}, we prove the {\em almost sure}
high-dimensional convergence in $C([0,T], \mathbf P(\bR))$ for the
empirical spectral measure-valued process \eqref{eq-em} of the
Wigner-type matrix \eqref{matrix entries Wigner}. We also obtain the
PDE for the Stieltjes transform $G_t(z)$ of the limiting measure
process $\mu_t$, which { turns to be}  the complex Burgers' equation up to a change of variable (see Remark \ref{remark-1'} and Remark \ref{reamrk-3'}). This generalizes the results of \cite{Pardo2016}, where the special case of $X_t = B_t^H$ was studied (note that the convergence obtained therein is in law). A complex analogue is also studied at the end of this section.

In Section \ref{sec:dependent}, we extend the model of Wigner-type
matrix to the locally dependent symmetric matrix-valued stochastic
process $R^N(t) = \left( R^N_{ij}(t) \right)_{1 \le i, j \le N}$,
where each $R_{ij}^N(t)$ is a weighted sum of
i.i.d. $\{X_{(k,l)}(t)\}$ for $(k,l)$ in a fixed and bounded
neighborhood of $(i,j)$ (see \eqref{matrix entries dependent} for the
definition). We also establish the {\em almost sure} convergence of
the empirical spectral measure-valued process in $C([0,T], \mathbf
P(\bR))$. Moreover, the limiting measure-valued process $\mu_t$ is
characterized by an equation satisfied by its Stieltjes transform (see
Theorem \ref{Thm-LED dependence}).
{ It is worth noticing that for the  proofs of}
 the {\em almost sure} convergence for the empirical spectral
 measure-valued processes in both Section \ref{sec:hdl} and Section
 \ref{sec:dependent}, we follow the strategy used in \cite{Song20} (which was inspired by \cite{Anderson2010}).

In Section \ref{sec:Wishart}, we study the high-dimensional limit of the empirical spectral measure-valued processes of the Wishart-type matrix-valued processes given by \eqref{matrix entries Wishart} and its complex analogue \eqref{matrix entries Wishart-complex}. In the special case where $X_t = B_t^H$, we recover the convergence result in \cite{Pardo2017}. We also obtain the PDE for the Stieltjes transform $G_t(z)$ of the limit measure process $\mu_t$ (see Remarks \ref{remark-2'} and \ref{remark-4'}).

For  Wishart-type matrix-valued processes, we are not able to obtain the {\em almost sure} convergence in $C([0,T], \mathbf P(\bR))$ for the empirical spectral measure-valued processes as done in Section \ref{sec:hdl} and Section \ref{sec:dependent} since the method used there heavily relies on the independence of the upper triangular entries.  Instead, we obtain the convergence in law by a tightness criterion for probability measures on $C([0,T], \mathbf P(\bR))$ (see Theorem \ref{Thm-0'},  Propositions \ref{Thm-1'} and \ref{remark:b1} in Appendix \ref{subset:tightness argument}).  The tightness criterion Theorem \ref{Thm-0'} might be already well-known in the literature (we refer to, e.g.,  \cite{Rogers1993, Pardo2016, Pardo2017, Jaramillo2019} for related arguments), however, we could not find a reference stating it explicitly, and we provide it in Appendix \ref{subset:tightness argument}.

 Finally, for the reader's convenience, some preliminaries on random matrices used in the proofs are provided in Appendix \ref{sec:pre-matrix}, and some useful lemmas and tightness criterions are provided in  Appendix \ref{subset:tightness argument}.

\section{H\"{o}lder continuity of the solution to the SDE~\protect\eqref{SDE}} \label{sec:Holder}

Some notations are in order.
The H\"{o}lder norm of a H\"{o}lder continuous function $f$ of order $\beta$ is
\begin{align*}
	\|f\|_{a,b;\beta} = \sup_{a \le x < y \le b} \dfrac{|f(x) - f(y)|}{|x-y|^{\beta}}.
\end{align*}
We also use 
\begin{align*}
	\|M\|_F = \left( \sum_{i,j=1}^N |M_{ij}|^2 \right)^{1/2}.
\end{align*}
to denote  the Frobenius norm (also known as the Hilbert-Schmidt norm or $2$-Schatten norm) of a $N \times N$ matrix $M = (M_{ij})_{1 \le i, j \le N}$.

\subsection{Preliminaries on fractional calculus and fractional Brownian motion}

In this subsection, we recall some basic results in  fractional calculus. See \cite{skm93} for more details.
Let $a,b\in \mathbb{R}$ with $a<b$ and let $ \alpha >0.$  The
left-sided  and right-sided  fractional Riemann-Liouville integrals
of $f\in L^1([a,b])$ of order $\alpha$   are   defined for almost all $t\in(a,b)$ by
\[
I_{a+}^{\alpha }f\left( t\right) =\frac{1}{\Gamma \left( \alpha \right) }%
\int_{a}^{t}\left( t-s\right) ^{\alpha -1}f\left( s\right)
ds\,,
\]%
and
\[
I_{b-}^{\alpha }f\left( t\right) =\frac{\left( -1\right) ^{-\alpha
}}{\Gamma \left( \alpha \right) }\int_{t}^{b}\left( s-t\right)
^{\alpha -1}f\left( s\right) ds,
\]%
respectively, where $\left( -1\right) ^{-\alpha }=e^{-i\pi \alpha }$ and $%
\displaystyle\Gamma \left( \alpha \right) =\int_{0}^{\infty }r^{\alpha
-1}e^{-r}dr$ is the Euler gamma function.

Let $I_{a+}^\alpha(L^p)$ (resp. $I_{b-}^\alpha(L^p)$) be the image of $L^p([a,b])$ by the operator $I_{a+}^\alpha$ (resp. $I_{b-}^\alpha$).  If $f\in I_{a+}^\alpha(L^p)$ (resp. $f\in I_{b-}^\alpha(L^p)$ ) and $\alpha\in(0,1)$, then the left-sided and right-sided  fractional derivatives are defined as
\begin{align}
&D_{a+}^{\alpha }f\left( t\right) =\frac{1}{\Gamma \left( 1-\alpha \right) }%
\left( \frac{f\left( t\right) }{\left( t-a\right) ^{\alpha }}+\alpha
\int_{a}^{t}\frac{f\left( t\right) -f\left( s\right) }{\left(
t-s\right) ^{\alpha +1}}ds\right), \notag\\
&D_{b-}^{\alpha }f\left( t\right) =\frac{\left( -1\right) ^{\alpha
}}{\Gamma \left( 1-\alpha \right) }\left( \frac{f\left( t\right)
}{\left( b-t\right)
^{\alpha }}+\alpha \int_{t}^{b}\frac{f\left( t\right) -f\left( s\right) }{%
\left( s-t\right) ^{\alpha +1}}ds\right)  \label{e-fd}
\end{align}
respectively, for almost all $t\in(a,b)$.

Let $C^\alpha([a,b])$ denote the space of $\alpha$-H\"older continuous functions of order $\alpha$ on the interval $[a,b]$. When $\alpha p>1$, then we have $I_{a+}^\alpha(L^p)\subset C^{\alpha-\frac1p}([a,b])$. On the other hand, if $\beta>\alpha$, then $C^\beta([a,b])\subset I_{a+}^\alpha(L^p)$ for all $p>1$.

The following inversion formulas hold:

\begin{alignat*}{3}
&I_{a+}^\alpha(I_{a+}^\beta f)=I_{a+}^{\alpha+\beta} f, & \qquad~& f\in L^1;\\
&D_{a+}^\alpha(I_{a+}^\alpha f)=f, & ~~&f\in L^1; \\
&I_{a+}^\alpha(D_{a+}^\alpha f)=f, & ~~&f\in I_{a+}^\alpha(L^1);\\
&D_{a+}^\alpha(D_{a+}^\beta f)=D_{a+}^{\alpha+\beta} f, &~~ &f\in I_{a+}^{\alpha+\beta}(L^1) , ~\alpha+\beta\le1.
\end{alignat*}
Similar inversion formulas hold for the operators $I_{b-}^\alpha$ and $D_{b-}^\alpha$ as well.

We also have the following integration by parts formula.
\begin{proposition}\label{p0}
If $f\in I_{a+}^\alpha(L^p),~g\in I_{b-}^\alpha(L^q)$ and $\frac1p+\frac1q=1$, we have
\begin{equation}\label{ibp}
\int_a^b (D_{a+}^\alpha f)(s)g(s)ds=\int_a^b f(s)(D_{b-}^\alpha g)(s)ds.
\end{equation}
\end{proposition}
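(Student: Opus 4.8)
The plan is to reduce the identity \eqref{ibp} to the corresponding integration-by-parts formula for the fractional \emph{integrals}, which in turn follows from Fubini's theorem. First I would use the hypotheses directly: since $f \in I_{a+}^\alpha(L^p)$ and $g \in I_{b-}^\alpha(L^q)$, by definition of these image spaces there exist $\phi \in L^p([a,b])$ and $\psi \in L^q([a,b])$ with $f = I_{a+}^\alpha \phi$ and $g = I_{b-}^\alpha \psi$. The inversion formulas recalled just above the proposition (namely $D_{a+}^\alpha I_{a+}^\alpha = \mathrm{id}$ on $I_{a+}^\alpha(L^1)$, and its right-sided analogue, whose validity here uses $I_{a+}^\alpha(L^p)\subset I_{a+}^\alpha(L^1)$ on the bounded interval) then identify the fractional derivatives appearing in \eqref{ibp}: one gets $D_{a+}^\alpha f = \phi$ and $D_{b-}^\alpha g = \psi$ almost everywhere.

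With these substitutions the left-hand side of \eqref{ibp} becomes $\int_a^b \phi(s)\,(I_{b-}^\alpha \psi)(s)\,ds$ and the right-hand side becomes $\int_a^b (I_{a+}^\alpha \phi)(s)\,\psi(s)\,ds$, so the claim is exactly the symmetry
\[
\int_a^b (I_{a+}^\alpha \phi)(s)\,\psi(s)\,ds = \int_a^b \phi(s)\,(I_{b-}^\alpha \psi)(s)\,ds .
\]
I would establish this by writing out the left-hand fractional integral and applying Fubini's theorem: expanding $I_{a+}^\alpha\phi$ gives $\frac{1}{\Gamma(\alpha)}\iint_{a\le r\le s\le b}(s-r)^{\alpha-1}\phi(r)\psi(s)\,dr\,ds$, and interchanging the order of integration over the simplex $\{a\le r\le s\le b\}$ reproduces $\int_a^b\phi(r)\,(I_{b-}^\alpha\psi)(r)\,dr$; the phase factor in the definition of the right-sided operator $I_{b-}^\alpha$ is precisely what makes the multiplicative constants on the two sides agree.

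The step I expect to be the main obstacle is the rigorous justification of the interchange of integration, i.e.\ verifying that the double integral converges absolutely so that Fubini--Tonelli applies. For this I would control $\iint_{a\le r\le s\le b}(s-r)^{\alpha-1}|\phi(r)|\,|\psi(s)|\,dr\,ds$ using the conjugacy $\frac1p+\frac1q=1$. Since $\alpha\in(0,1)$, the kernel $(s-r)^{\alpha-1}\mathbf 1_{\{r<s\}}$ is integrable in each variable separately, with both its row and column integrals bounded by $(b-a)^\alpha/\alpha$; a Schur-test (equivalently Young/Minkowski) argument then shows that $I_{b-}^\alpha$ maps $L^q([a,b])$ boundedly into itself, after which H\"older's inequality bounds the double integral by $\|\phi\|_{L^p}\,\big\|I_{b-}^\alpha|\psi|\big\|_{L^q}<\infty$. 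This finiteness legitimizes Fubini's theorem and completes the proof.
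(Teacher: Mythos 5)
The paper itself offers no proof of Proposition \ref{p0}: it is recalled as a standard fact of fractional calculus, with \cite{skm93} cited for details. So the only possible comparison is with the classical argument, and your proposal is essentially that classical argument, carried out correctly. Writing $f=I_{a+}^\alpha\phi$, $g=I_{b-}^\alpha\psi$ and invoking the inversion formulas (legitimate, as you note, because $L^p([a,b])\subset L^1([a,b])$ on a bounded interval) reduces \eqref{ibp} to the symmetry $\int_a^b(I_{a+}^\alpha\phi)\,\psi=\int_a^b\phi\,(I_{b-}^\alpha\psi)$, and the Fubini--Tonelli step is properly justified by your Schur-test/H\"older bound, since the kernel $(s-r)^{\alpha-1}\mathbf 1_{\{r<s\}}$ has row and column integrals bounded by $(b-a)^\alpha/\alpha$. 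This is exactly how the result is established in the reference.

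One caveat deserves mention, concerning your parenthetical claim that the phase factor in the definition of $I_{b-}^\alpha$ ``is precisely what makes the multiplicative constants on the two sides agree.'' Taken literally this is backwards: with the paper's conventions, $I_{b-}^\alpha$ carries the factor $(-1)^{-\alpha}=e^{-i\pi\alpha}$ while $I_{a+}^\alpha$ carries none, so after your reduction the left-hand side of \eqref{ibp} equals $(-1)^{-\alpha}$ times the right-hand side. The factors \emph{do} cancel inside the composition $D_{b-}^\alpha I_{b-}^\alpha$ (which is why your inversion step is clean, since $(-1)^{\alpha}(-1)^{-\alpha}=1$), but a single unmatched $(-1)^{-\alpha}$ survives in the final comparison. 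The identity \eqref{ibp} is exactly true in the conventions of \cite{skm93}, where the right-sided operators carry no phase factors, and it is in that sense --- or with $(-1)^{\pm\alpha}$ read formally as $1$ for real-valued functions, as this literature tacitly does --- that both the proposition and your proof should be understood. This is a defect of the stated conventions (which the paper's own statement inherits) rather than of your argument, but a careful write-up should either drop the phase factors or track them explicitly, rather than assert that they make the constants match.
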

The following proposition indicates the relationship between Young's integral and Lebesgue integral.
\begin{proposition}
\label{p1} Suppose that $f\in C^{\lambda }(a,b)$ and $g\in C^{\mu
}(a,b)$ with $\lambda +\mu >1$. Let ${\lambda }>\alpha $ and $\mu
>1-\alpha $. Then the Riemann-Stieltjes integral $\int_{a}^{b}fdg$
exists and it can be
expressed as%
\begin{equation}
\int_{a}^{b}fdg=(-1)^{\alpha }\int_{a}^{b}D_{a+}^{\alpha }f\left(
t\right) D_{b-}^{1-\alpha }g_{b-}\left( t\right) dt\,,
\label{e.2.4}
\end{equation}%
where $g_{b-}\left( t\right) =g\left( t\right) -g\left( b\right) $.
\end{proposition}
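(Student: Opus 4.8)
The plan is to prove \eqref{e.2.4} first for smooth integrands, where the identity reduces to a direct Fubini computation on the Riemann--Liouville kernels, and then to remove the smoothness by an approximation that keeps both sides continuous. Existence of the Riemann--Stieltjes integral itself is not the main issue: since $f\in C^\lambda$ and $g\in C^\mu$ with $\lambda+\mu>1$, the classical Love--Young inequality guarantees that $\int_a^b f\,dg$ exists as a limit of Riemann--Stieltjes sums and depends continuously on $(f,g)$ in the Hölder norms. The right-hand side of \eqref{e.2.4} is also well defined under the stated hypotheses: because $\lambda>\alpha$ we have $f\in C^\lambda\subset I_{a+}^\alpha(L^p)$, so $D_{a+}^\alpha f\in L^p$; because $\mu>1-\alpha$ we have $g_{b-}\in C^\mu\subset I_{b-}^{1-\alpha}(L^q)$, so $D_{b-}^{1-\alpha}g_{b-}\in L^q$; choosing conjugate exponents $p,q$ (for instance $p=q=2$), Hölder's inequality makes the product integrable.

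For smooth $f,g$ I would start from $\int_a^b f\,dg=\int_a^b f(t)g'(t)\,dt$ and substitute the inversion formula $f=I_{a+}^\alpha(D_{a+}^\alpha f)$. Writing out the kernel of $I_{a+}^\alpha$ and exchanging the order of integration by Fubini's theorem, the inner integral $\frac{1}{\Gamma(\alpha)}\int_s^b (t-s)^{\alpha-1}g'(t)\,dt$ equals $(-1)^\alpha I_{b-}^\alpha g'(s)$ on account of the factor $(-1)^{-\alpha}$ in the definition of $I_{b-}^\alpha$; this is precisely where the prefactor $(-1)^\alpha$ in \eqref{e.2.4} originates. One is then left with
\begin{equation*}
\int_a^b f\,dg=(-1)^\alpha\int_a^b D_{a+}^\alpha f(t)\, I_{b-}^\alpha g'(t)\,dt,
\end{equation*}
a fractional integration-by-parts identity in the spirit of Proposition \ref{p0}. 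It remains to identify $I_{b-}^\alpha g'$ with $D_{b-}^{1-\alpha}g_{b-}$: since $g_{b-}(b)=0$ one checks $g_{b-}=I_{b-}^1 g'$, and the right-sided semigroup property $I_{b-}^1=I_{b-}^{1-\alpha}I_{b-}^\alpha$ together with the inversion rule $D_{b-}^{1-\alpha}I_{b-}^{1-\alpha}=\mathrm{id}$ give $D_{b-}^{1-\alpha}g_{b-}=D_{b-}^{1-\alpha}I_{b-}^{1-\alpha}(I_{b-}^\alpha g')=I_{b-}^\alpha g'$. This establishes \eqref{e.2.4} for smooth $f,g$.

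To pass to general Hölder integrands I would fix exponents $\lambda',\mu'$ with $\alpha<\lambda'<\lambda$, $1-\alpha<\mu'<\mu$ and $\lambda'+\mu'>1$, which is possible because the three inequalities for $\lambda,\mu$ are strict, and approximate $f,g$ by their mollifications $f_n,g_n$, which converge to $f,g$ in $C^{\lambda'}$ and $C^{\mu'}$ respectively. On the left the Love--Young inequality yields $\int_a^b f_n\,dg_n\to\int_a^b f\,dg$. On the right, the operators $f\mapsto D_{a+}^\alpha f$ from $C^{\lambda'}$ into $L^p$ and $g_{b-}\mapsto D_{b-}^{1-\alpha}g_{b-}$ from $C^{\mu'}$ into $L^q$ are bounded, as one reads off from the explicit formulas \eqref{e-fd} and the embeddings recalled above; hence $D_{a+}^\alpha f_n\to D_{a+}^\alpha f$ in $L^p$ and $D_{b-}^{1-\alpha}(g_n)_{b-}\to D_{b-}^{1-\alpha}g_{b-}$ in $L^q$, and Hölder's inequality passes the product integral to the limit. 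Equating the limits of the two sides gives \eqref{e.2.4} in the general case.

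I expect the sign bookkeeping and the density step to be the two delicate points. The factor $(-1)^\alpha$ must be carried consistently through the Fubini step and the right-sided composition and inversion rules, where the conventions $(-1)^{\pm\alpha}=e^{\mp i\pi\alpha}$ intervene and are easy to misplace. The density step is subtle because smooth functions are \emph{not} dense in $C^\lambda$ in its own norm; the remedy is to approximate in the slightly weaker Hölder norms $C^{\lambda'},C^{\mu'}$, for which mollification does converge, while retaining $\lambda'>\alpha$, $\mu'>1-\alpha$ and $\lambda'+\mu'>1$, so that both the Young integral and the fractional derivatives remain continuous along the approximating sequence.
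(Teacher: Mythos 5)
The paper never proves Proposition \ref{p1}: it is quoted in the preliminaries as a known result of fractional calculus (Z\"{a}hle's formula), with the blanket pointer to \cite{skm93}, so there is no in-paper argument to compare yours against. Your proposal supplies what is essentially the standard proof, and its architecture is sound: existence of the Young integral from the Love--Young inequality; the identity for smooth $f,g$ via the inversion $f = I_{a+}^{\alpha}(D_{a+}^{\alpha}f)$, Fubini, and the identification $I_{b-}^{\alpha}g' = D_{b-}^{1-\alpha}g_{b-}$ through $g_{b-} = I_{b-}^{1}g'$, the right-sided semigroup property and $D_{b-}^{1-\alpha}I_{b-}^{1-\alpha} = \mathrm{id}$ (your sign bookkeeping with $(-1)^{\pm\alpha}$ is carried correctly, and this is indeed where the prefactor $(-1)^{\alpha}$ comes from); and a limiting argument by mollification in the strictly weaker norms $C^{\lambda'}, C^{\mu'}$, which correctly handles the non-density of smooth functions in $C^{\lambda}$. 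Note also that your condition $\lambda' + \mu' > 1$ is automatic once $\lambda' > \alpha$ and $\mu' > 1-\alpha$, so nothing extra needs to be arranged there.

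One detail needs repair: the parenthetical ``for instance $p=q=2$'', and more generally the suggestion that any conjugate pair works. From \eqref{e-fd}, $D_{a+}^{\alpha}f$ contains the term $f(t)(t-a)^{-\alpha}$, which for $f(a)\neq 0$ is a genuine singularity, so $D_{a+}^{\alpha}f \in L^{p}$ only when $\alpha p < 1$; the hypotheses of the proposition allow $\alpha \ge 1/2$ (e.g.\ $\lambda=\mu=0.9$, $\alpha=0.7$), in which case $p=2$ fails. The fix is immediate and actually simplifies your pairing: because $g_{b-}(b)=0$ and $\mu > 1-\alpha$, the explicit formula gives $|D_{b-}^{1-\alpha}g_{b-}(t)| \le C\|g\|_{\mu}(b-t)^{\mu-(1-\alpha)}$, so this factor is \emph{bounded}; hence take any $p \in (1, 1/\alpha)$ with conjugate $q$, for which the bounded factor lies in $L^{q}$ trivially. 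The same choice repairs the operator bounds in your approximation step: $D_{a+}^{\alpha} : C^{\lambda'} \to L^{p}$ is bounded precisely when $\alpha p < 1$, while $D_{b-}^{1-\alpha}$ maps $\{h \in C^{\mu'} : h(b)=0\}$ boundedly into $L^{\infty}$. With this adjustment of exponents the proof is complete. (In the only place the paper applies the proposition it fixes $\alpha \in (1-H+\e, 1/2)$, so the slip would be invisible there, but the proposition as stated permits larger $\alpha$; also be aware that the paper's own embedding statement ``$C^{\beta}\subset I_{a+}^{\alpha}(L^{p})$ for all $p>1$'' suffers from the same caveat when the function does not vanish at the endpoint.)
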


It is known that almost all the paths of $B^H$ are $(H-\e)$-H\"older continuous for $\e\in(0,H)$. By the Fernique Theorem, we have the following estimation for its H\"older norm. 
\begin{lemma}\label{lemma-Holder continuity of fbm}
There exists a positive constant $\alpha=\alpha(H,\e, T)$ depending on $(H, \e, T)$, such that
\begin{align*}
	\bE \left[ e^{\alpha \|B^H\|_{0,T; H-\e}^2} \right] < \infty.
\end{align*}
\end{lemma}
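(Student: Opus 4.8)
The plan is to realize $B^H$ as a centered Gaussian random variable taking values in a separable Banach space on which the H\"older seminorm $\|\cdot\|_{0,T;H-\e}$ coincides with the norm, and then to invoke Fernique's theorem, which guarantees square-exponential integrability of the norm of any centered Gaussian vector in a separable Banach space.

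First I would address the only delicate point, namely separability: the full H\"older space $C^{H-\e}([0,T])$ is not separable, so Fernique's theorem cannot be applied to it directly. To circumvent this, fix $\e'\in(0,\e)$ and set $\beta=H-\e$, $\beta'=H-\e'$. By the stated H\"older regularity of fBm, almost every path of $B^H$ belongs to $C^{\beta'}([0,T])$; and since for $\beta'>\beta$ one has, for any $f\in C^{\beta'}$,
\[
\sup_{0<|t-s|\le\delta}\frac{|f(t)-f(s)|}{|t-s|^{\beta}}\le \|f\|_{0,T;\beta'}\,\delta^{\beta'-\beta}\xrightarrow{\delta\to0}0,
\]
every $\beta'$-H\"older function lies in the \emph{little H\"older space} $c^{\beta}([0,T])$, the closed subspace of $C^{\beta}([0,T])$ consisting of functions satisfying the above vanishing-modulus condition. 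The space $c^{\beta}([0,T])$ is a separable Banach space under the norm $|f(0)|+\|f\|_{0,T;\beta}$, and since $B^H_0=0$ this norm reduces to $\|B^H\|_{0,T;H-\e}$ along the paths of $B^H$. Thus almost surely $B^H$ takes values in $c^{H-\e}([0,T])$.

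Next I would verify that the law of $B^H$ on $c^{H-\e}([0,T])$ is a centered Gaussian measure. Measurability follows because, by path continuity, $\|B^H\|_{0,T;H-\e}$ equals the supremum of $|B^H_t-B^H_s|/|t-s|^{H-\e}$ over rational pairs $s<t$, a countable supremum of measurable Gaussian quantities. Gaussianity in the Banach-space sense follows because the coordinate functionals $f\mapsto f(t)$ are continuous on $c^{H-\e}([0,T])$, separate its points, and send $B^H$ to the centered Gaussian variables $B^H_t$; every element of the dual is a limit of finite linear combinations of these, so $\ell(B^H)$ is centered Gaussian for all $\ell\in(c^{H-\e})^*$. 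Fernique's theorem then provides a constant $\alpha=\alpha(H,\e,T)>0$ with $\bE[e^{\alpha\|B^H\|_{0,T;H-\e}^2}]<\infty$, as claimed.

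The main obstacle is precisely this passage from the non-separable H\"older space to the separable little H\"older space, together with the verification of Gaussianity and measurability there; once these functional-analytic points are settled the conclusion is an immediate application of Fernique's theorem. An alternative route, avoiding the little H\"older space entirely, would be to use the Garsia--Rodemich--Rumsey inequality to dominate $\|B^H\|_{0,T;H-\e}$ by an explicit double integral of the increments $|B^H_t-B^H_s|$, and then apply the Borell--Sudakov--Tsirelson Gaussian concentration inequality to the resulting supremum-type functional; this yields the same square-exponential tail bound directly.
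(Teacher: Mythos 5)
Your proposal is correct and takes essentially the same approach as the paper, whose entire proof consists of invoking Fernique's theorem for the Gaussian law of $B^H$; what you add are the supporting functional-analytic details (passage to the separable little H\"older space $c^{H-\e}([0,T])$, measurability of the H\"older norm, and Gaussianity of the induced law). The one assertion you state without proof --- that every element of $\bigl(c^{H-\e}([0,T])\bigr)^*$ is a pointwise limit of finite linear combinations of evaluation functionals --- is a genuine but standard fact (it follows, e.g., from Ciesielski's isomorphism of $c^{H-\e}([0,T])$ with $c_0$), and it can also be bypassed entirely by applying the version of Fernique's theorem for almost surely finite measurable seminorms to the Borel seminorm $f \mapsto \|f\|_{0,T;H-\e}$ on the separable Banach space $C([0,T])$, where the law of $B^H$ is Gaussian in the classical sense.
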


\subsection{H\"older continuity}

In this subsection, for the solution to \eqref{SDE}, we provide some estimations for its H\"older norm, following the approach developed  in \cite[Theorem 2]{Hu2007}. 
\begin{theorem} \label{Thm-Holder1}
Suppose that the coefficient functions $\sigma$ and $b$ are bounded
and have bounded derivatives which are H\"{o}lder continuous of order
greater than $1/(H-\e)-1$ for some $\e\in(0,H-\frac12)$. Then there
exists a constant $C(H,\e)$ that depends on $(H,\e)$ only, such that
for all $T>0$ and $ 0 \le s < t \le T$,
\begin{align} \label{eq-thm1}
	|X_t - X_s| &\le C(H,\e) \|\sigma\|_{\infty} \Big[ \|B^H\|_{0,T;H-\e} (t-s)^{H-\e} \vee \|B^H\|_{0,T;H-\e}^{1/(H-\e)}  \|\sigma'\|_{\infty}^{1/(H-\e)-1} (t-s)\Big] \nonumber \\
	&\quad + 2\|b\|_{\infty} (t-s).
\end{align}
Consequently, there exists a random variable $\xi$ with $\bE|\xi|^p<\infty$ for all $p>1$ such that for $ 0 \le s < t \le T$,
\[|X_t(\omega)-X_s(\omega)|\le \xi(\omega)|t-s|^{H-\e}, a.s.\]
\end{theorem}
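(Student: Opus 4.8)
The plan is to follow the fractional-calculus strategy of \cite[Theorem 2]{Hu2007}, using that for $H>1/2$ the Stratonovich integral coincides with the pathwise Young (Riemann--Stieltjes) integral, so that on each subinterval
\[
X_t-X_s=\int_s^t \sigma(X_u)\,dB^H_u+\int_s^t b(X_u)\,du .
\]
The drift is harmless, $\big|\int_s^t b(X_u)\,du\big|\le \|b\|_{\infty}(t-s)$, producing directly the $2\|b\|_{\infty}(t-s)$ term. Writing $\beta=H-\e$, the hypothesis $\e<H-\tfrac12$ gives $\beta>\tfrac12$, so one can fix $\alpha\in(1-\beta,\beta)$. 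Applying the integration-by-parts formula \eqref{e.2.4} with this $\alpha$ to $f=\sigma(X)$ and $g=B^H$ bounds the Young integral by $\int_s^t |D_{s+}^{\alpha}\sigma(X)(u)|\,|D_{t-}^{1-\alpha}B^H_{t-}(u)|\,du$.

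First I would estimate the two fractional derivatives separately. Since $B^H$ is $\beta$-H\"older and $\alpha+\beta>1$, the standard estimate for the right-sided derivative yields $|D_{t-}^{1-\alpha}B^H_{t-}(u)|\le C_{\alpha,\beta}\|B^H\|_{s,t;\beta}(t-u)^{\alpha+\beta-1}$. For the left-sided derivative, boundedness of $\sigma$ together with $|\sigma(X_u)-\sigma(X_v)|\le\|\sigma'\|_{\infty}|X_u-X_v|$ gives
\[
|D_{s+}^{\alpha}\sigma(X)(u)|\le \frac{1}{\Gamma(1-\alpha)}\Big(\frac{\|\sigma\|_{\infty}}{(u-s)^{\alpha}}+\alpha\|\sigma'\|_{\infty}\!\int_s^u\frac{|X_u-X_v|}{(u-v)^{\alpha+1}}\,dv\Big).
\]
Inserting both bounds, the $\|\sigma\|_{\infty}$ piece produces the Beta-function integral $\int_s^t(t-u)^{\alpha+\beta-1}(u-s)^{-\alpha}\,du=(t-s)^{\beta}B(1-\alpha,\alpha+\beta)$, i.e.\ exactly the target order $(t-s)^{\beta}$.

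The main obstacle is the remaining, self-referential piece, which again contains the increments $|X_u-X_v|$. I would close it by a bootstrap on the local H\"older seminorm $Y_{s,t}=\sup_{s\le v<u\le t}|X_u-X_v|/(u-v)^{\beta}$. Using $\beta>\alpha$ so that $\int_s^u(u-v)^{\beta-\alpha-1}dv$ converges, this piece is bounded by $C\|\sigma'\|_{\infty}\|B^H\|_{s,t;\beta}\,Y_{s,t}\,(t-s)^{2\beta}$. Dividing the whole increment bound by $(t-s)^{\beta}$ and taking the supremum over subintervals gives
\[
Y_{s,t}\le C\|\sigma\|_{\infty}\|B^H\|_{s,t;\beta}+\|b\|_{\infty}(t-s)^{1-\beta}+C\|\sigma'\|_{\infty}\|B^H\|_{s,t;\beta}(t-s)^{\beta}\,Y_{s,t}.
\]
Choosing the length scale $\Delta_0\simeq(\|B^H\|_{0,T;\beta}\|\sigma'\|_{\infty})^{-1/\beta}$ makes the last coefficient $\le\tfrac12$, absorbing the self-referential term on every interval with $t-s\le\Delta_0$ and yielding $|X_t-X_s|\le C\|\sigma\|_{\infty}\|B^H\|_{0,T;\beta}(t-s)^{\beta}+2\|b\|_{\infty}(t-s)$ there. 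For $t-s>\Delta_0$ I would split $[s,t]$ into $\lceil(t-s)/\Delta_0\rceil$ pieces of length $\Delta_0$ and sum the Young-integral bound; this contributes $\sim(t-s)\,\Delta_0^{\beta-1}\|\sigma\|_{\infty}\|B^H\|_{0,T;\beta}=(t-s)\|\sigma\|_{\infty}\|B^H\|_{0,T;\beta}^{1/\beta}\|\sigma'\|_{\infty}^{1/\beta-1}$, which is precisely the second member of the maximum, while the drift stays linear. Combining the two regimes gives \eqref{eq-thm1}.

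For the final $L^p$ statement, I would observe that on $[0,T]$, after bounding $(t-s)^{1-\beta}\le T^{1-\beta}$, every summand of \eqref{eq-thm1} is at most a constant depending on $(\|\sigma\|_{\infty},\|\sigma'\|_{\infty},\|b\|_{\infty},T)$ times $\big(\|B^H\|_{0,T;\beta}\vee\|B^H\|_{0,T;\beta}^{1/\beta}\big)$ times $(t-s)^{\beta}$. Taking $\xi$ to be this prefactor, Lemma \ref{lemma-Holder continuity of fbm} shows $\bE[e^{\alpha\|B^H\|_{0,T;\beta}^{2}}]<\infty$, so $\|B^H\|_{0,T;\beta}$ has finite moments of every order; hence $\bE|\xi|^{p}<\infty$ for all $p>1$, completing the proof.
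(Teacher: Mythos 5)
Your proposal follows essentially the same route as the paper's proof: the same fractional integration-by-parts bound via Proposition \ref{p1}, the same two fractional-derivative estimates, the same absorption of the self-referential H\"older-seminorm term on intervals of length $\Delta_0\simeq(\|B^H\|_{0,T;H-\e}\|\sigma'\|_{\infty})^{-1/(H-\e)}$, the same subdivision argument for longer intervals, and the same Fernique-based moment bound for $\xi$. The only point you should make explicit is that the absorption step requires knowing a priori that the local H\"older seminorm $Y_{s,t}$ is finite almost surely (the paper cites \cite{nr02} for exactly this), since the inequality $Y_{s,t}\le A+\tfrac12 Y_{s,t}$ yields $Y_{s,t}\le 2A$ only when $Y_{s,t}<\infty$.
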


\begin{remark}
Note that the pathwise H\"{o}lder continuity of the process $X_t$ was established in  \cite[Theorem 2.1]{nr02} under some Lipschitz conditions  and growth conditions on the coefficient functions $b$ and $\sigma$. They also obtained the $L^p$-integrability of the H\"{o}lder norm of the process for $H>\frac12+\frac\gamma4$ where $\gamma\in[0,1]$ is the growth rate of the function $\sigma$. 

 An upper bound for $\sup_{t \in [0,T]} |X_t|$ was obtained in \cite[Theorem 2]{Hu2007}, and it turns out that the techniques introduced in \cite{Hu2007} can be used to improve the estimations for the pathwise H\"{o}lder norm. This is done in Theorem \ref{Thm-Holder1} for bounded coefficient functions, and in Theorem \ref{Thm-Holder2} for unbounded coefficient functions  (with linear growth). Compared with the result in \cite{nr02}, Theorems \ref{Thm-Holder1} and \ref{Thm-Holder2} provide sharper upper bounds for the increments of the processes, which allow to establish the $L^p$-integrability of the H\"{o}lder norm for all $H > 1/2$. In contrast,   \cite[Theorem 2.1]{nr02} obtained the $L^p$-integrability for $H > 3/4$ when the coefficient functions have linear growth.
\end{remark}

\begin{proof}[Proof of Theorem \ref{Thm-Holder1}]
Fix $\alpha \in (1-H+\e, 1/2)$.  By Proposition \ref{p1},  for $0 \le s \le t \le T$, 
\begin{align} \label{eq-complement2}
	\left| \int_s^t \sigma(X_r) \circ dB_r^H  \right|	&\le \int_s^t \left| D_{s+}^{\alpha} \sigma(X_r) D_{t-}^{1-\alpha} (B_r^H - B_t^H) \right| dr.
\end{align}
By \eqref{e-fd} and Lemma \ref{lemma-Holder continuity of fbm}, for $r \in [s,t]$, we have
\begin{align} \label{eq-complement3}
	\left| D_{t-}^{1-\alpha} (B_r^H - B_t^H) \right|
	&= \dfrac{1}{\Gamma(\alpha)} \left| \dfrac{B_r^H - B_t^H}{(t-r)^{1-\alpha}} + (1-\alpha) \int_r^t \dfrac{B_r^H - B_u^H}{(u-r)^{2-\alpha}} du \right| \nonumber \\
	&\le \dfrac{1}{\Gamma(\alpha)} \dfrac{\left| B_r^H - B_t^H \right|}{|t-r|^{1-\alpha}} + \dfrac{1-\alpha}{\Gamma(\alpha)} \int_r^t \dfrac{\left| B_r^H - B_u^H \right|}{|u-r|^{2-\alpha}} du \nonumber \\
	&\le \dfrac{\|B^H\|_{s,t;H-\e}}{\Gamma(\alpha)} |t-r|^{H-\e-1+\alpha} + \dfrac{(1-\alpha) \|B^H\|_{s,t;H-\e}}{\Gamma(\alpha)} \int_r^t (u-r)^{H-\e+\alpha-2} du \nonumber \\
	&= \left( \dfrac{1}{\Gamma(\alpha)} + \dfrac{1-\alpha}{(H-\e+\alpha-1) \Gamma(\alpha)} \right) \|B^H\|_{s,t;H-\e} |t-r|^{H-\e-1+\alpha}\, ,
\end{align}
and 
\begin{align} \label{eq-complement4}
	\left| D_{s+}^{\alpha} \sigma(X_r) \right|
	&\le \dfrac{1}{\Gamma(1-\alpha)} \dfrac{\left| \sigma(X_r) \right|}{|r-s|^{\alpha}} + \dfrac{\alpha}{\Gamma(1-\alpha)} \int_s^r \dfrac{\left| \sigma(X_r) - \sigma(X_u) \right|}{|r-u|^{\alpha+1}} du \nonumber \\
	&\le \dfrac{\|\sigma\|_{\infty}}{\Gamma(1-\alpha)} |r-s|^{-\alpha} + \dfrac{\alpha \|\sigma'\|_{\infty}}{\Gamma(1-\alpha)} \int_s^r \dfrac{\left| X_r - X_u \right|}{|r-u|^{\alpha+1}} du \nonumber \\
	&\le \dfrac{\|\sigma\|_{\infty}}{\Gamma(1-\alpha)} |r-s|^{-\alpha} + \dfrac{\alpha \|\sigma'\|_{\infty} \|X\|_{s,t;H-\e}}{\Gamma(1-\alpha)} \int_s^r \dfrac{1}{|r-u|^{\alpha+1-(H-\e)}} du \nonumber \\
	&= \dfrac{\|\sigma\|_{\infty}}{\Gamma(1-\alpha)} |r-s|^{-\alpha} + \dfrac{\alpha \|\sigma'\|_{\infty} \|X\|_{s,t;H-\e}}{\Gamma(1-\alpha) (H-\e-\alpha)} |r-s|^{H-\e-\alpha}\, ,
\end{align}
 where the H\"older norm $\|X\|_{s,t;H-\e}$ is finite a.s. by \cite[Theorem 2.1]{nr02}.

By \eqref{eq-complement2}, \eqref{eq-complement3} and \eqref{eq-complement4}, we have
\begin{align} \label{eq-3.1}
	&\left| \int_s^t \sigma(X_r) \circ dB_r^H \right| \nonumber \\
	\le& \int_s^t \left( \dfrac{\|\sigma\|_{\infty}}{\Gamma(1-\alpha)} |r-s|^{-\alpha} + \dfrac{\alpha \|\sigma'\|_{\infty} \|X\|_{s,t;H-\e}}{\Gamma(1-\alpha) (H-\e-\alpha)} |r-s|^{H-\e-\alpha} \right) \nonumber \\
	&\qquad\quad  \left( \dfrac{1}{\Gamma(\alpha)} + \dfrac{1-\alpha}{(H-\e+\alpha-1) \Gamma(\alpha)} \right) \|B^H\|_{s,t;H-\e} |t-r|^{H-\e-1+\alpha} dr \nonumber \\
	=& \left( \dfrac{1}{\Gamma(\alpha)} + \dfrac{1-\alpha}{(H-\e+\alpha-1) \Gamma(\alpha)} \right) \dfrac{\|\sigma\|_{\infty}}{\Gamma(1-\alpha)} \|B^H\|_{s,t;H-\e} \int_s^t |t-r|^{H-\e-1+\alpha} |r-s|^{-\alpha} dr \nonumber \\
	& +\left( \dfrac{1}{\Gamma(\alpha)} + \dfrac{1-\alpha}{(H-\e+\alpha-1) \Gamma(\alpha)} \right) \dfrac{\alpha \|\sigma'\|_{\infty} \|X\|_{s,t;H-\e}}{\Gamma(1-\alpha) (H-\e-\alpha)} \|B^H\|_{s,t;H-\e} \int_s^t |t-r|^{H-\e-1+\alpha} |r-s|^{H-\e-\alpha} dr \nonumber \\
	=& \left( \dfrac{1}{\Gamma(\alpha)} + \dfrac{1-\alpha}{(H-\e+\alpha-1) \Gamma(\alpha)} \right) \dfrac{\|\sigma\|_{\infty}}{\Gamma(1-\alpha)} \|B^H\|_{s,t;H-\e} (t-s)^{H-\e} \beta(H-\e+\alpha,1-\alpha) \nonumber \\
	&\quad +\left( \dfrac{1}{\Gamma(\alpha)} + \dfrac{1-\alpha}{(H-\e+\alpha-1) \Gamma(\alpha)} \right) \dfrac{\alpha \|\sigma'\|_{\infty} \|X\|_{s,t;H-\e}}{\Gamma(1-\alpha) (H-\e-\alpha)} \|B^H\|_{s,t;H-\e} (t-s)^{2H-2\e} \nonumber \\
	&\quad \qquad \times  \beta(H-\e+\alpha, H-\e-\alpha+1) \nonumber \\
	\le&  C_1(H, \e) \|B^H\|_{0,T;H-\e} \Big[ \|\sigma\|_{\infty} (t-s)^{H-\e} + \|\sigma'\|_{\infty} \|X\|_{s,t;H-\e} (t-s)^{2H-2\e} \Big],
\end{align}
where $\beta(p,q)$ is the Beta function, and $C_1(H, \e)$ is a constant depending on $(H, \e)$ only.


 Hence, by \eqref{eq-3.1}, for $0 \le s \le t \le T$,
\begin{align} \label{eq-3.2}
	\left| X_t - X_s \right|
	&\le \left| \int_s^t \sigma(X_r) \circ dB_r^H \right| + \left| \int_s^t b(X_r) dr \right| \nonumber \\
	&\le C_1(H,\e) \|B^H\|_{0,T;H-\e} \Big[ \|\sigma\|_{\infty} (t-s)^{H-\e} + \|\sigma'\|_{\infty} \|X\|_{s,t;H-\e} (t-s)^{2H-2\e} \Big] \nonumber \\
	&\quad+ \|b\|_{\infty} (t-s),
\end{align}
and therefore,
\begin{align} \label{eq-3.3}
	\|X\|_{s,t;H-\e}
	&\le C_1(H,\e) \|B^H\|_{0,T;H-\e} \Big[ \|\sigma\|_{\infty} + \|\sigma'\|_{\infty} \|X\|_{s,t;H-\e} (t-s)^{H-\e} \Big] \nonumber \\
	&\quad+ \|b\|_{\infty} (t-s)^{1-H+\e}.
\end{align}
Choose $\Delta$ such that
\begin{align*}
	\Delta^{H-\e} = \dfrac{1}{2C_1(H,\e) \|B^H\|_{0,T;H-\e} \|\sigma'\|_{\infty}}\, ,
\end{align*}
and if the denominator vanishes, we set $\Delta=\infty$.

If $\Delta \ge t-s$,  \eqref{eq-3.3} yields
\begin{align*}
	\|X\|_{s,t;H-\e}
	&\le C_1(H,\e) \|B^H\|_{0,T;H-\e} \|\sigma\|_{\infty} + \dfrac{1}{2} \|X\|_{s,t;H-\e} + \|b\|_{\infty} (t-s)^{1-H+\e},
\end{align*}
and thus,
\begin{align} \label{eq-3.4}
	\left| X_t - X_s \right|
	&\le (t-s)^{H-\e} \|X\|_{s,t;H-\e} \nonumber \\
	&\le 2C_1(H,\e) \|B^H\|_{0,T;H-\e} \|\sigma\|_{\infty} (t-s)^{H-\e} + 2\|b\|_{\infty} (t-s).
\end{align}

If $\Delta < t-s$, we divide the interval $[s,t]$ into $n = [(t-s)/\Delta] + 1$ subintervals, whose lengths are smaller than $\Delta$. Let $s = u_0 < u_1 < \cdots < u_n = t$ be the endpoints of the subintervals. Then $u_i - u_{i-1} \le \Delta$ for $1 \le i \le n$. Hence, by \eqref{eq-3.4},
\begin{align} \label{eq-3.5}
	\left| X_t - X_s \right|
	&\le \sum_{i=1}^n \left| X_{u_i} - X_{u_{i-1}} \right| \nonumber \\
	&\le 2C_1(H,\e) \|B^H\|_{0,T;H-\e} \|\sigma\|_{\infty} \sum_{i=1}^n (u_i-u_{i-1})^{H-\e} + 2\|b\|_{\infty} \sum_{i=1}^n (u_i-u_{i-1}) \nonumber \\
	&\le 2C_1(H,\e) \|B^H\|_{0,T;H-\e} \|\sigma\|_{\infty} n \Delta^{H-\e} + 2\|b\|_{\infty} (t-s) \nonumber \\
	&\le 4C_1(H,\e) \|B^H\|_{0,T;H-\e} \|\sigma\|_{\infty} (t-s) \Delta^{H-\e-1} + 2\|b\|_{\infty} (t-s) \nonumber \\
	&\le 2^{1+1/(H-\e)} C_1(H,\e)^{1/(H-\e)} \|B^H\|_{0,T;H-\e}^{1/(H-\e)} \|\sigma\|_{\infty}  \|\sigma'\|_{\infty}^{1/(H-\e)-1} (t-s) + 2\|b\|_{\infty} (t-s).
\end{align}
The desired result follows from \eqref{eq-3.4} and \eqref{eq-3.5}, and the proof is concluded.
\end{proof}

For the case where  the coefficient functions $\sigma$ or $b$ are unbounded, we have the following estimation. 

\begin{theorem} \label{Thm-Holder2}
Suppose that the coefficient functions $\sigma$ and $b$ have bounded derivatives which are H\"{o}lder continuous of order  greater than $1/(H-\e)-1$ for some $\e\in (0, H-\frac12)$. Moreover, if $\|\sigma'\|_{\infty} + \|b'\|_{\infty} > 0$, then for all $T>0$, for all $ 0 \le s < t \le T$,
\begin{align} \label{eq-3.7}
	|X_t - X_s| \le C(H,\e, \sigma, b, T)^{1+(\|B^H\|_{s,t;H-\e})^{1/(H-\e)}}  (|X_0|+1) (t-s)^{H-\e},
\end{align}
where $C(H,\e,\sigma, b, T)$ is a constant depending only on $(H, \e, \sigma, b, T)$.
\end{theorem}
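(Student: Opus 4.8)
The plan is to mirror the structure of the proof of Theorem~\ref{Thm-Holder1}, but now tracking the dependence on $\|\sigma\|_\infty$, $\|b\|_\infty$ through the local bounds that come from linear growth of the coefficients on the range of $X$. The starting point is the same Young integral estimate: fixing $\alpha\in(1-H+\e,1/2)$ and using Propositions~\ref{p0} and~\ref{p1} together with the bounds \eqref{eq-complement3} and \eqref{eq-complement4}, one obtains an inequality of the same shape as \eqref{eq-3.1}. The crucial difference is that, since $\sigma$ and $b$ are no longer bounded, the quantities $\|\sigma\|_\infty$ and $\|b\|_\infty$ in \eqref{eq-3.1}--\eqref{eq-3.3} must be replaced by $\sup_{r\in[s,t]}|\sigma(X_r)|$ and $\sup_{r\in[s,t]}|b(X_r)|$, which by linear growth are controlled by $C(1+\sup_{r\in[0,T]}|X_r|)$. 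Thus the first step is to derive the local analogue of \eqref{eq-3.3}, namely a bound for $\|X\|_{s,t;H-\e}$ in terms of $\sup_{r\in[0,T]}|X_r|$, $\|B^H\|_{s,t;H-\e}$, and $\|\sigma'\|_\infty$.

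The second step is to carry out the same interval-splitting argument as in \eqref{eq-3.4}--\eqref{eq-3.5}: choose $\Delta$ with $\Delta^{H-\e}=\bigl(2C_1(H,\e)\|B^H\|_{0,T;H-\e}\|\sigma'\|_\infty\bigr)^{-1}$, absorb the $\tfrac12\|X\|_{s,t;H-\e}$ term on short intervals, and sum over $n=[(t-s)/\Delta]+1$ subintervals. This produces a bound of the form $|X_t-X_s|\le C\,(1+\sup_{r}|X_r|)\,\|B^H\|_{0,T;H-\e}^{1/(H-\e)}(t-s)^{H-\e}$, where the exponent $1/(H-\e)$ on the fBm H\"older norm is exactly what appears in the exponent of the claimed constant $C(H,\e,\sigma,b,T)^{1+(\|B^H\|_{s,t;H-\e})^{1/(H-\e)}}$. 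To convert the factor $\|B^H\|^{1/(H-\e)}$ in the prefactor into the exponent position claimed in \eqref{eq-3.7}, I would use the elementary inequality $x\le C^{1+x}$ valid for all $x\ge0$ and a suitable constant $C>1$; this is a cosmetic repackaging that lets the stochastic dependence be written as an exponent.

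The genuinely new ingredient, and the main obstacle, is that the right-hand side now involves $\sup_{r\in[0,T]}|X_r|$, which must itself be controlled rather than treated as an a~priori finite constant. For this I would invoke the pathwise supremum bound of \cite[Theorem 2]{Hu2007}, which under linear growth of the coefficients gives an estimate of the form $\sup_{r\in[0,T]}|X_r|\le C(H,\e,\sigma,b,T)^{1+\|B^H\|_{0,T;H-\e}^{1/(H-\e)}}(|X_0|+1)$. Substituting this into the bound from the second step produces the factor $(|X_0|+1)$ and a second exponential-in-$\|B^H\|$ factor; combining the two exponential factors (again via the inequality for manipulating exponents of a common base) yields the single expression $C^{1+\|B^H\|_{s,t;H-\e}^{1/(H-\e)}}$ in \eqref{eq-3.7}. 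The delicate point to get right is the bookkeeping of the various $(H,\e,\sigma,b,T)$-dependent constants so that they can all be absorbed into one base $C$, and ensuring the passage from the global norm $\|B^H\|_{0,T;H-\e}$ in the Hu--Nualart bound to the local norm $\|B^H\|_{s,t;H-\e}$ in the statement is legitimate (it is, since $\|B^H\|_{s,t;H-\e}\le\|B^H\|_{0,T;H-\e}$, though one must be careful about which norm appears where). Once the supremum bound is in hand, the remaining work is routine.
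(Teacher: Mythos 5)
Your proposal follows essentially the same route as the paper's proof: replace boundedness of $\sigma,b$ by linear growth ($|\sigma(X_r)|\le\|\sigma'\|_\infty|X_r|+|\sigma(0)|$, which is the paper's \eqref{eq-complement5}), invoke the Hu--Nualart supremum bound \cite[Theorem 2]{Hu2007} to control $\sup_{r\in[0,T]}|X_r|$ (the paper's \eqref{eq-3.8}), derive a self-referential bound on $\|X\|_{s,t;H-\e}$ that is absorbed on intervals of length at most a threshold $\Delta$ determined by $\|\sigma'\|_\infty$ and $\|B^H\|_{0,T;H-\e}$, and finish by interval splitting, packaging all polynomial factors in $\|B^H\|$ into the exponential constant $C^{1+\|B^H\|^{1/(H-\e)}}$. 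The ordering of steps (splitting first, supremum bound second) differs cosmetically from the paper, which fixes the supremum bound at the outset, but the ingredients and estimates are the same.

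One caveat: your parenthetical justification for passing from the global norm to the local norm is backwards. Since $\|B^H\|_{s,t;H-\e}\le\|B^H\|_{0,T;H-\e}$ and the base $C$ exceeds $1$, one has
\begin{align*}
C^{1+(\|B^H\|_{s,t;H-\e})^{1/(H-\e)}}\le C^{1+(\|B^H\|_{0,T;H-\e})^{1/(H-\e)}},
\end{align*}
so the bound with the \emph{local} norm in the exponent, as literally written in \eqref{eq-3.7}, is the \emph{stronger} statement and does not follow from the global-norm bound that your argument (and the Hu--Nualart estimate) actually produces. This is not a deviation from the paper: the paper's own proof has the identical slippage, since its \eqref{eq-3.9} and \eqref{eq-3.8} carry $\|B^H\|_{0,T;H-\e}$ yet \eqref{eq-3.10'} and \eqref{eq-3.11} are written with $\|B^H\|_{s,t;H-\e}$. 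What both arguments honestly deliver is \eqref{eq-3.7} with the global norm $\|B^H\|_{0,T;H-\e}$ in the exponent, and that version suffices for every later application (Lemma \ref{lemma-Holder continuity of fbm} still gives $L^p$-integrability of the resulting random constant because $1/(H-\e)<2$). You should either state the conclusion with the global norm, or note explicitly that the local-norm form would require a separate argument (e.g., restarting the equation at time $s$, which replaces $|X_0|$ by $|X_s|$ and reintroduces the global norm anyway).
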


\begin{proof}
 
Obviously the function $g(t)=t$ is H\"{o}lder continuous of any order $\beta \in (0,1]$ with the H\"{o}lder norm $\|g\|_{0,T;\beta} = T^{1-\beta}$. Hence, by \cite[Theorem 2 (i)]{Hu2007}, we have
\begin{align} \label{eq-3.8}
	\sup_{t \in [0,T]} |X_t|
	&\le 2^{1 + C(H,\e) T \left[ \|\sigma'\|_{\infty} + \|b'\|_{\infty} + |\sigma(0)| + |b(0)| \right]^{1/(H-\e)} \left( \|B^H\|_{0,T;H-\e} + T^{1-(H-\e)} \right)^{1/(H-\e)}} (|X_0| + 1)\notag\\
	&\le C(H,\e, b,\sigma, T)^{1+(\|B^H\|_{0,t;H-\e})^{1/(H-\e)}}(|X_0|+1).
\end{align}
In this proof,  $C(H,\e)$ and $C(H, \e, b, \sigma, T)$ are generic positive constants  depending only on $(H,\e)$ and $(H, \e, b,\sigma, T)$, respectively, and they may vary in different places.

The estimations \eqref{eq-complement2} and \eqref{eq-complement3} are still valid. Instead of \eqref{eq-complement4}, we have
\begin{align} \label{eq-complement5}
	\left| D_{s+}^{\alpha} \sigma(X_r) \right|
	&= \dfrac{1}{\Gamma(1-\alpha)} \left| \dfrac{\sigma(X_r)}{(r-s)^{\alpha}} + \alpha \int_s^r \dfrac{\sigma(X_r) - \sigma(X_u)}{(r-u)^{\alpha+1}} du \right| \nonumber \\
	&\le \dfrac{1}{\Gamma(1-\alpha)} \dfrac{\left| \sigma(X_r) - \sigma(0) \right| + |\sigma(0)|}{|r-s|^{\alpha}} + \dfrac{\alpha}{\Gamma(1-\alpha)} \int_s^r \dfrac{\left| \sigma(X_r) - \sigma(X_u) \right|}{|r-u|^{\alpha+1}} du \nonumber \\
	&\le \dfrac{1}{\Gamma(1-\alpha)} \dfrac{\|\sigma'\|_\infty |X_r| + |\sigma(0)|}{|r-s|^{\alpha}} + \dfrac{\alpha \|\sigma'\|_\infty}{\Gamma(1-\alpha)} \int_s^r \dfrac{\left| X_r - X_u \right|}{|r-u|^{\alpha+1}} du \nonumber \\
	&\le \dfrac{1}{\Gamma(1-\alpha)} \dfrac{\|\sigma'\|_\infty |X_r| + |\sigma(0)|}{|r-s|^{\alpha}} + \dfrac{\alpha \|\sigma'\|_\infty \|X\|_{s,t;H-\e}}{\Gamma(1-\alpha)} \int_s^r \dfrac{1}{|r-u|^{\alpha+1-(H-\e)}} du \nonumber \\
	&\le \dfrac{1}{\Gamma(1-\alpha)} \dfrac{\|\sigma'\|_\infty |X_r| + |\sigma(0)|}{|r-s|^{\alpha}} + \dfrac{\alpha \|\sigma'\|_\infty \|X\|_{s,t;H-\e}}{\Gamma(1-\alpha) (H-\e-\alpha)} |r-s|^{H-\e-\alpha}.
\end{align}
By \eqref{eq-complement2}, \eqref{eq-complement3}, \eqref{eq-complement5} and \eqref{eq-3.8}, we have
\begin{align} \label{eq-complement6}
	&\left| \int_s^t \sigma(X_r) \circ dB_r^H \right| \nonumber \\
	&\le \int_s^t \left( \dfrac{1}{\Gamma(\alpha)} + \dfrac{(1-\alpha)}{(H-\e+\alpha-1) \Gamma(\alpha)} \right) \|B^H\|_{s,t;H-\e} |t-r|^{H-\e-1+\alpha} \nonumber \\
	&\quad \left( \dfrac{1}{\Gamma(1-\alpha)} \dfrac{\|\sigma'\|_\infty |X_r| + |\sigma(0)|}{|r-s|^{\alpha}} + \dfrac{\alpha \|\sigma'\|_\infty \|X\|_{s,t;H-\e}}{\Gamma(1-\alpha) (H-\e-\alpha)} |r-s|^{H-\e-\alpha} \right) dr \nonumber \\
	&\le \dfrac{(H-\e) \|B^H\|_{s,t;H-\e}}{\Gamma(1-\alpha) \Gamma(\alpha) (H-\e+\alpha-1)} \left( \|\sigma'\|_\infty \sup_{r \in [0,T]} |X_r| + |\sigma(0)| \right) \int_s^t |t-r|^{H-\e-1+\alpha} |r-s|^{-\alpha} dr \nonumber \\
	&\quad + \dfrac{\alpha (H-\e) \|\sigma'\|_\infty \|X\|_{s,t;H-\e}}{\Gamma(1-\alpha) \Gamma(\alpha) (H-\e-\alpha) (H-\e+\alpha-1)} \|B^H\|_{s,t;H-\e} \int_s^t |t-r|^{H-\e-1+\alpha} |r-s|^{H-\e-\alpha} dr \nonumber \\
	&= \dfrac{(H-\e) \|B^H\|_{s,t;H-\e}}{\Gamma(1-\alpha) \Gamma(\alpha) (H-\e+\alpha-1)} \left( \|\sigma'\|_\infty \sup_{r \in [0,T]} |X_r| + |\sigma(0)| \right) |t-s|^{H-\e} \beta(H-\e + \alpha, 1-\alpha) \nonumber \\
	&\quad + \dfrac{\alpha (H-\e) \|\sigma'\|_\infty \|X\|_{s,t;H-\e}}{\Gamma(1-\alpha) \Gamma(\alpha) (H-\e-\alpha) (H-\e+\alpha-1)} \|B^H\|_{s,t;H-\e} |t-s|^{2H-2\e} \beta(H-\e + \alpha, H-\e - \alpha + 1) \nonumber \\
	&\le C(H,\e) \|B^H\|_{s,t;H-\e} |t-s|^{H-\e} \left[ \|\sigma'\|_\infty \sup_{r \in [0,T]} |X_r| + |\sigma(0)| + \|\sigma'\|_\infty \|X\|_{s,t;H-\e} |t-s|^{H-\e} \right].
\end{align}
 Similarly, it is easy to see that
\begin{align} \label{eq-complement7}
	\left| \int_s^t b(X_r) dr \right| &\le |t-s|  \left[ \|b'\|_\infty \sup_{r \in [0,T]} |X_r| + |b(0)| \right]\notag\\
	&\le  |t-s|^{H-\e} T^{1-(H-\e)}  \left[ \|b'\|_\infty \sup_{r \in [0,T]} |X_r| + |b(0)| \right].
\end{align}

 Hence, by \eqref{eq-complement6} and \eqref{eq-complement7}, there exists a constant $C_0(H,\e)$ such that 
\begin{align*}
	\left| X_t - X_s \right|
	&\le \left| \int_s^t \sigma(X_r) \circ dB_r^H \right| + \left| \int_s^t b(X_r) dr \right| \nonumber \\
	&\le C_0(H,\e) \left( \|B^H\|_{s,t;H-\e} + T^{1-(H-\e)} \right) |t-s|^{H-\e} \Big[ \left( \|\sigma'\|_\infty + \|b'\|_\infty \right) \sup_{r \in [0,T]} |X_r|  \nonumber \\
	&\quad +  |\sigma(0)| + |b(0)|  +  \|\sigma'\|_\infty \|X\|_{s,t;H-\e} |t-s|^{H-\e} \Big],
\end{align*}
and consequently, 
\begin{align} \label{eq-complement8}
	\|X\|_{s,t;H-\e}
	&\le C_0(H, \e) \left( \|B^H\|_{s,t;H-\e} + T^{1-(H-\e)} \right) \Big[ \left( \|\sigma'\|_\infty + \|b'\|_\infty \right) \sup_{r \in [0,T]} |X_r|  \nonumber \\
	&\quad + |\sigma(0)| + |b(0)| + \|\sigma'\|_\infty \|X\|_{s,t;H-\e} |t-s|^{H-\e} \Big].
\end{align}

Fix a positive constant $\Delta$ such that
\begin{align*}
	\Delta \le \left( \dfrac{1}{3C_0(H,\e) \|\sigma'\|_\infty  (\|B^H\|_{0,T;H-\e} + T^{1-(H-\e)})} \right)^{1/(H-\e)}\, . 
\end{align*}

   If $t - s \le \Delta$, we have
\begin{align} \label{eq-3.9}
	\|X\|_{s,t;H-\e}
	\le \dfrac{3}{2} C_0(H,\e) \left( \|B^H\|_{0,T;H-\e} + T^{1-(H-\e)} \right) \left( |\sigma(0)| + |b(0)| + \left( \|\sigma'\|_\infty + \|b'\|_\infty \right) \sup_{t \in [0,T]} |X_t| \right).
\end{align}
Then by \eqref{eq-3.8} and \eqref{eq-3.9},  we have 
\begin{align} \label{eq-3.10'}
 \|X\|_{s,t;H-\e} 
	\le C(H,\e, \sigma, b, T)^{1+(\|B^H\|_{s,t;H-\e})^{1/(H-\e)}} (|X_0|+1). 
\end{align}

If $t-s > \Delta$, Similar to the proof of Theorem \ref{Thm-Holder1},  we divide the interval $[s,t]$ into $n = [(t-s)/\Delta] + 1$ subintervals, whose lengths are smaller than $\Delta$. Let $s = u_0 < u_1 < \cdots < u_n = t$ be endpoints of the subintervals. Then by \eqref{eq-3.8} and \eqref{eq-3.9}, we have
\begin{align} \label{eq-3.11}
	|X_t - X_s| &\le \sum_{i=1}^n |X_{u_i} - X_{u_{i-1}}| \nonumber \\
	&\le C(H,\e, \sigma, b, T)^{1+(\|B^H\|_{s,t;H-\e})^{1/(H-\e)}} (|X_0|+1)\sum_{i=1}^n (u_i - u_{i-1})^{H-\e} \nonumber \\
	&\le C(H,\e, \sigma, b, T)^{1+(\|B^H\|_{s,t;H-\e})^{1/(H-\e)}} (|X_0|+1)n\Delta^{H-\e} \nonumber \\
	&\le C(H,\e, \sigma, b, T)^{1+(\|B^H\|_{s,t;H-\e})^{1/(H-\e)}} (|X_0|+1) (t-s) \varDelta^{H - \e - 1} \nonumber \\
	&\le C(H,\e, \sigma, b, T)^{1+(\|B^H\|_{s,t;H-\e})^{1/(H-\e)}}  (|X_0|+1) (t-s)^{H-\e}.  \end{align}
The desired result \eqref{eq-3.7} come from \eqref{eq-3.10'} and \eqref{eq-3.11}.
\end{proof}

Combining Theorem \ref{Thm-Holder1}, Theorem \ref{Thm-Holder2},  and Lemma  \ref{lemma-Holder continuity of fbm} with dominated convergence theorem, one can prove the following corollary.  
\begin{corollary} \label{Coro-upper bound}
 Assume that $\bE[|X_0|^p] < \infty$ for $p\ge 2$.  If the conditions in Theorem \ref{Thm-Holder1} or in Theorem \ref{Thm-Holder2} hold,  then $\bE[|X_t|^p]$ is a continuous function of $t$. 
\end{corollary}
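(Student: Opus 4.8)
The plan is to prove continuity of $t \mapsto \bE[|X_t|^p]$ by a dominated convergence argument. It suffices to establish two ingredients: (i) the integrable envelope $\bE\big[\big(\sup_{u \in [0,T]} |X_u|\big)^p\big] < \infty$, and (ii) for almost every $\omega$, the path $t \mapsto X_t(\omega)$ is continuous. Granting these, for any sequence $t_n \to s$ in $[0,T]$ one has $|X_{t_n}|^p \to |X_s|^p$ almost surely by (ii), while $|X_{t_n}|^p \le \big(\sup_{u \in [0,T]} |X_u|\big)^p$ is dominated by the integrable variable from (i); dominated convergence then gives $\bE[|X_{t_n}|^p] \to \bE[|X_s|^p]$, and since the sequence is arbitrary, $t \mapsto \bE[|X_t|^p]$ is continuous on $[0,T]$. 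Ingredient (ii) is immediate, since Theorem \ref{Thm-Holder1} and Theorem \ref{Thm-Holder2} both guarantee that $X$ has $(H-\e)$-H\"older, hence continuous, paths almost surely.

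The substance is ingredient (i), which I would treat separately in the two regimes. Under the hypotheses of Theorem \ref{Thm-Holder1} (bounded coefficients), writing $|X_u| \le |X_0| + |X_u - X_0| \le |X_0| + \xi\, T^{H-\e}$ with $\xi$ the random variable furnished by that theorem, I get $\sup_{u \in [0,T]} |X_u| \le |X_0| + \xi\, T^{H-\e}$; since $\bE|\xi|^p < \infty$ for all $p>1$ and $\bE|X_0|^p < \infty$ by assumption, Minkowski's inequality yields (i). Under the hypotheses of Theorem \ref{Thm-Holder2} (linear growth), estimate \eqref{eq-3.8} provides
\[
	\sup_{u \in [0,T]} |X_u| \le C(H,\e,\sigma,b,T)^{\,1 + (\|B^H\|_{0,T;H-\e})^{1/(H-\e)}} (|X_0| + 1).
\]
Raising to the $p$-th power and using that $X_0$ is independent of $B^H$, the expectation factorises into $\bE[(|X_0|+1)^p]$, finite by assumption, times $\bE\big[C^{\,p\,(1 + (\|B^H\|_{0,T;H-\e})^{1/(H-\e)})}\big]$, so it remains to check that this second factor is finite.

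The finiteness of $\bE\big[\exp\big(c\,(\|B^H\|_{0,T;H-\e})^{1/(H-\e)}\big)\big]$ for every $c > 0$ is the main obstacle, and it is exactly where the constraint $\e \in (0, H - \tfrac12)$ is used. Since $H - \e > 1/2$, the exponent satisfies $1/(H-\e) < 2$; hence for any $c, \alpha > 0$ the elementary bound $\sup_{x \ge 0}\big(c\, x^{1/(H-\e)} - \alpha x^2\big) =: C_\alpha < \infty$ holds, so that $\exp\big(c\,(\|B^H\|_{0,T;H-\e})^{1/(H-\e)}\big) \le e^{C_\alpha}\, e^{\alpha \|B^H\|_{0,T;H-\e}^2}$. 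Choosing $\alpha$ equal to the constant in Lemma \ref{lemma-Holder continuity of fbm} and invoking that lemma (Fernique's theorem) shows the exponential moment is finite, completing ingredient (i) in the linear-growth case and hence the proof. The only delicate point is this sub-quadratic absorption: had $1/(H-\e) \ge 2$, the Gaussian-type integrability of $\|B^H\|_{0,T;H-\e}$ supplied by Lemma \ref{lemma-Holder continuity of fbm} would not suffice to control the exponential factor arising from \eqref{eq-3.8}.
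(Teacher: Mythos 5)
Your proposal is correct and is essentially the paper's own argument: the paper proves this corollary by the one-line remark that it follows from ``combining Theorem~\ref{Thm-Holder1}, Theorem~\ref{Thm-Holder2}, and Lemma~\ref{lemma-Holder continuity of fbm} with dominated convergence theorem,'' and your proof — an integrable envelope for $\sup_{u\in[0,T]}|X_u|$ from the H\"older bounds (using independence of $X_0$ and $B^H$ in the linear-growth case), plus almost-sure path continuity, plus dominated convergence — is exactly that route, fleshed out. The sub-quadratic absorption $c\,x^{1/(H-\e)} \le \alpha x^2 + C_\alpha$ with $\alpha$ taken to be the constant of Lemma~\ref{lemma-Holder continuity of fbm}, valid because $H-\e>1/2$, is indeed the key point left implicit in the paper, and you handle it correctly.
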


\section{High-dimensional limit  for Wigner-type matrices}\label{sec:hdl}

\subsection{ Relative compactness of empirical spectral measure-valued processes}

Denote by $\mathbf P(\bR)$ the space of probability measures equipped
with weak topology, then $\mathbf P(\bR)$ is a Polish space. For  $T >
0$, let $C([0,T],\mathbf P(\bR))$ be the space of continuous $\mathbf
P(\bR)$-valued  processes.
In this subsection, under proper conditions, we obtain the {\em almost sure} relative compactness in $C([0,T], \mathbf P(\bR))$ for the set $\{L_N(t), t\in[0,T]\}_{N\in\bN}$ of empirical spectral measures   of  $Y^N(t)$ with entries given in \eqref{matrix entries Wigner}  by using Lemma \ref{lemma in Anderson}(\cite[Lemma 4.3.13]{Anderson2010}).

\begin{theorem} \label{Thm-tightness}
Assume one of the following hypotheses holds:
\begin{itemize}
	\item[H1.] Conditions in Theorem \ref{Thm-Holder1} hold;
	\item[H2.] Conditions in Theorem \ref{Thm-Holder2} hold and $\bE [|X_0|^4] < \infty$.
\end{itemize}
Assume that there exists a positive function $\varphi(x) \in C^1(\bR)$ with bounded derivative, such that $\lim_{|x| \rightarrow \infty} \varphi(x) =  \infty$ and
\begin{align*}
	\sup_{N \in \bN} \langle \varphi, L_N(0) \rangle \le C_0,
\end{align*}
for some positive constant $C_0$ almost surely. Then for any $T > 0$, the sequence $\{L_N(t), t \in [0,T]\}_{N \in \bN}$ is relatively compact in $C([0,T],\mathbf P(\bR))$ almost surely.
\end{theorem}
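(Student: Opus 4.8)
The plan is to apply the Arzelà--Ascoli--type criterion of Lemma \ref{lemma in Anderson}, which reduces the almost-sure relative compactness of $\{L_N(\cdot)\}_{N}$ in $C([0,T],\mathbf P(\bR))$ to two conditions, each to hold almost surely: (i) a \emph{tightness} (compact-containment) condition, that the family $\{L_N(t):N\in\bN,\ t\in[0,T]\}$ be tight in $\mathbf P(\bR)$; and (ii) an \emph{equicontinuity} condition, that for each $f$ in a fixed countable convergence-determining family of bounded $1$-Lipschitz functions the real processes $t\mapsto\langle f,L_N(t)\rangle$ admit a modulus of continuity independent of $N$. Both conditions will follow from one deterministic inequality. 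Listing the eigenvalues of the symmetric matrices $Y^N(t)$ and $Y^N(s)$ in increasing order and combining the Hoffman--Wielandt inequality (Appendix \ref{sec:pre-matrix}) with Cauchy--Schwarz, I obtain, for any $1$-Lipschitz $f$ and all $0\le s<t\le T$,
\[
|\langle f,L_N(t)\rangle-\langle f,L_N(s)\rangle|\le\frac1N\sum_{i=1}^N|\lambda_i^N(t)-\lambda_i^N(s)|\le\frac1{\sqrt N}\,\|Y^N(t)-Y^N(s)\|_F .
\]

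Everything thus rests on controlling the normalized Frobenius increment uniformly in $N$. By the definition \eqref{matrix entries Wigner} of the entries,
\[
\frac1N\|Y^N(t)-Y^N(s)\|_F^2=\frac{2}{N^2}\sum_{1\le i<j\le N}|X_{ij}(t)-X_{ij}(s)|^2+\frac{2}{N^2}\sum_{i=1}^N|X_{ii}(t)-X_{ii}(s)|^2 .
\]
Under H1 (resp. H2), Theorem \ref{Thm-Holder1} (resp. Theorem \ref{Thm-Holder2}) supplies, for every entry, a random constant $\xi_{ij}$ with $|X_{ij}(t)-X_{ij}(s)|\le\xi_{ij}|t-s|^{H-\e}$ \emph{simultaneously} for all $0\le s<t\le T$; the attached integrability assumptions (in case H2, $\bE|X_0|^4<\infty$ together with the Fernique bound Lemma \ref{lemma-Holder continuity of fbm}, which is applicable since $1/(H-\e)<2$) guarantee that $\xi$ has a finite second moment. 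As the $\{\xi_{ij}\}$ are i.i.d.\ copies of $\xi$, the strong law of large numbers gives $N^{-2}\sum_{i<j}\xi_{ij}^2\to\tfrac12\bE[\xi^2]$ and $N^{-2}\sum_i\xi_{ii}^2\to0$ almost surely, so that
\[
M:=\sup_{N}\Big(\tfrac{2}{N^2}\sum_{i<j}\xi_{ij}^2+\tfrac{2}{N^2}\sum_i\xi_{ii}^2\Big)^{1/2}<\infty\quad\text{a.s.}
\]
Substituting the uniform Hölder bound into the two preceding displays yields $\tfrac1{\sqrt N}\|Y^N(t)-Y^N(s)\|_F\le M|t-s|^{H-\e}$, and hence $|\langle f,L_N(t)\rangle-\langle f,L_N(s)\rangle|\le M|t-s|^{H-\e}$ for every $1$-Lipschitz $f$, all $N$, and all $s,t\in[0,T]$, almost surely. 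This is precisely the $N$-uniform modulus of continuity required for (ii).

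For the tightness (i), I apply the same bound to the test function $\varphi$ of the hypothesis: since $\varphi$ has bounded derivative, matching eigenvalues in increasing order gives
\[
\langle\varphi,L_N(t)\rangle\le\langle\varphi,L_N(0)\rangle+\|\varphi'\|_\infty\frac1{\sqrt N}\|Y^N(t)-Y^N(0)\|_F\le C_0+\|\varphi'\|_\infty M\,T^{H-\e},
\]
using $\sup_N\langle\varphi,L_N(0)\rangle\le C_0$. Thus $\sup_{N}\sup_{t\in[0,T]}\langle\varphi,L_N(t)\rangle<\infty$ almost surely, and since $\varphi\ge0$ with $\varphi(x)\to\infty$ as $|x|\to\infty$, Markov's inequality turns this into $\sup_{N,t}L_N(t)(\{|x|>R\})\to0$ as $R\to\infty$, which is the uniform tightness (i). Lemma \ref{lemma in Anderson} then yields the claimed relative compactness.

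The only genuinely probabilistic step, and the crux of the argument, is the almost-sure finiteness of $M$, i.e.\ the uniform-in-$N$ control of the averaged squared Hölder constants. This is where the work of Section \ref{sec:Holder} pays off: what is needed is a \emph{pathwise} Hölder bound with a single random constant $\xi$ valid for all $s,t\in[0,T]$ at once (so that the ``for all $s,t$'' and ``almost surely'' quantifiers commute) and with a finite second moment, which is exactly what Theorems \ref{Thm-Holder1} and \ref{Thm-Holder2} were designed to deliver. The remaining ingredients---Hoffman--Wielandt, Cauchy--Schwarz, the strong law of large numbers for the i.i.d.\ array $\{\xi_{ij}^2\}$, and Markov's inequality---are routine, and the independence of the upper-triangular entries enters only through this law of large numbers.
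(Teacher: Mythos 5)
Your proof is correct, but it replaces the paper's key probabilistic mechanism with a genuinely different one. The deterministic skeleton is the same in both arguments: the pathwise H\"older bounds of Theorems \ref{Thm-Holder1}/\ref{Thm-Holder2}, the Hoffman--Wielandt inequality combined with Cauchy--Schwarz to control $|\langle f,L_N(t)\rangle-\langle f,L_N(s)\rangle|$ by the normalized Frobenius increment, and finally Lemma \ref{lemma in Anderson}. The difference is how the uniform-in-$N$ almost sure control is produced. You invoke the strong law of large numbers for the i.i.d.\ array $\{\xi_{ij}^2\}$ to obtain a single a.s.-finite random constant $M$, and then apply Lemma \ref{lemma in Anderson} pathwise with $\omega$-dependent compact sets $K(\omega)$ and $C_i(\omega)$ (legitimate, since the lemma is deterministic and Lemma \ref{Lemma-B3} and Arzel\`a--Ascoli make these sets compact for each fixed $\omega$). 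The paper instead fixes a \emph{deterministic} compact set $\mathfrak C$, with carefully chosen constants $M_0$ and $M_i$, bounds $\bP(L_N\notin\mathfrak C)$ via Chebyshev's inequality by a summable $O(N^{-2})$ quantity, and concludes by Borel--Cantelli that $L_N\in\mathfrak C$ for all large $N$ almost surely. Your route is shorter and needs only $\bE[\xi^2]<\infty$; in particular, under H2 the hypothesis $\bE[|X_0|^2]<\infty$ would already suffice, whereas the paper's Chebyshev step requires the variance of $\xi_{ij}^2$, i.e.\ $\bE[\xi^4]<\infty$, which is exactly what forces the fourth-moment assumption in H2. The paper's route buys, in exchange, a fixed non-random compact set containing all but finitely many of the $L_N$, which is marginally stronger information.

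Two points in your write-up deserve tightening. First, the strong law for the triangular sums $\sum_{i<j\le N}\xi_{ij}^2$ should be justified by enumerating the i.i.d.\ family so that the pairs $\{(i,j):i<j\le N\}$ form an initial segment (order by $j$, then by $i$); the usual SLLN then applies along the subsequence $n=N(N-1)/2$, and boundedness of the convergent sequence gives $M<\infty$ a.s. Second, Lemma \ref{lemma in Anderson} requires the countable family $\{f_i\}$ to be dense in $C_0(\bR)$, and bounded $1$-Lipschitz functions are \emph{not} dense in $C_0(\bR)$; this is harmless here because your increment bound scales linearly in the Lipschitz constant, so you should simply run the argument with the paper's dense family of $C_b^1$ functions and compact sets $C_i(\omega)$ built from the modulus $\|f_i'\|_{\infty}M|t-s|^{H-\e}$.
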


\begin{proof} Under hypothesis H1 (hypothesis H2, resp.), by Theorem \ref{Thm-Holder1} (Theorem \ref{Thm-Holder2}, resp.), we have 
\begin{align*}
	&\quad |X_t - X_s| \le \xi |t-s|^{H-\e}
\end{align*}
where $\xi$ is a random variable with finite moment of any order (with finite $4$-th order moment, resp.).  

 By \eqref{eq-em} and Lemma \ref{lemma-Hoffman ineq}, for any $f \in C^1(\bR)$ with bounded derivative,
\begin{align} \label{eq-4.4}
	&\quad \left| \langle f, L_N(t) \rangle - \langle f, L_N(s) \rangle \right|^2 = \left| \dfrac{1}{N} \sum_{i=1}^N \left( f(\lambda_i^N(t)) - f(\lambda_i^N(s)) \right) \right|^2 \nonumber \\
	&\le \dfrac{1}{N} \sum_{i=1}^N \left| f(\lambda_i^N(t)) - f(\lambda_i^N(s)) \right|^2 \le \dfrac{1}{N} \|f'\|_{\infty}^2 \sum_{i=1}^N \left| \lambda_i^N(t) - \lambda_i^N(s) \right|^2 \nonumber \\
	&\le \dfrac{1}{N} \|f'\|_{\infty}^2 \bT \left(Y^N(t) - Y^N(s) \right)^2 = \dfrac{1}{N} \|f'\|_{\infty}^2 \sum_{i,j=1}^N \left( Y_{ij}^N(t) - Y_{ij}^N(s) \right)^2 \nonumber \\
	&= \|f'\|_{\infty}^2 \left[ \dfrac{2}{N^2} \sum_{i<j}\left( X_{ij}^N(t) - X_{ij}^N(s) \right)^2 + \dfrac{2}{N^2} \sum_{i=1}^N \left( X_{ii}^N(t) - X_{ii}^N(s) \right)^2 \right] \nonumber \\
	&\le \|f'\|_{\infty}^2 (t-s)^{2H-2\e} \left( \dfrac{2}{N^2} \sum_{i<j} \xi_{ij}^2 + \dfrac{2}{N^2} \sum_{i=1}^N \xi_{ii}^2 \right) \nonumber \\
	&\le  \frac{4}{N(N+1)}\|f'\|_{\infty}^2 (t-s)^{2H-2\e}  \sum_{i\le j} \xi_{ij}^2 ,
\end{align}
where $\{\xi_{ij}\}$ are i.i.d copies of $\xi$. 

Recall that by the Arzela-Ascoli Theorem  (or Lemma \ref{Lemma-B4}), for any number $M > 0$, the set
\begin{align*}
	 \bigcap_{n=1}^{\infty} \left\{ g \in C([0,T], \bR): \sup_{s,t \in [0,T], |t-s| \le \eta_n} |g(t) - g(s)| \le \e_n, \quad \sup_{t \in [0,T]} |g(t)| \le M \right\},
\end{align*}
where $\{\e_n, n \in \bN\}$ and $\{\eta_n, n \in \bN\}$ are two sequences of positive real numbers going to zero as $n$ goes to infinity, is compact in $C([0,T], \bR)$.

Define
\begin{align*}
	K = \left\{ \nu \in \mathbf P(\bR): \int \varphi(x) \nu(dx) \le C_0 + M_0 \right\},
\end{align*}
where $M_0$ is a positive number that will be determined later. Then  by Lemma \ref{Lemma-B3}, $K$ is compact in $\mathbf P(\bR)$. Note that there exists a sequence of $C_b^1(\bR)$ functions $\{f_i\}_{i \in \bN}$ that it is dense in $C_0(\bR)$. Choose a positive integer $p_0$, such that $p_0(H-\e) > 1$, and define
\begin{align*}
	C_T(f_i) = \bigcap_{n=1}^{\infty} \left\{ \nu \in C([0,T], \mathbf P(\bR)): \sup_{s,t \in [0,T], |t-s| \le n^{-p_0}} |\langle f_i, \nu_t \rangle - \langle f_i, \nu_s \rangle| \le M_i n^{1 - p_0(H-\e)} \right\},
\end{align*}
where $\{M_i\}_{i \in \bN}$ is a sequence of positive numbers that are independent of $n$ and will be determined later. Denote
\begin{align*}
	\mathfrak{C} = \left\{ \nu \in C([0,T], \mathbf P(\bR)): \nu_t \in K, \forall t \in [0,T] \right\} \cap \bigcap_{i=1}^{\infty} C_T(f_i),
\end{align*}
then $\mathfrak{C}$ is compact in $C([0,T], \mathbf P(\bR))$, according to Lemma \ref{lemma in Anderson} (\cite[Lemma 4.3.13]{Anderson2010}). Hence, it is enough to show \begin{align*}
	\bP \left( \liminf_{N \rightarrow \infty} \big\{ L_N \in \mathfrak{C} \big\} \right) = 1.
\end{align*}

Note that $\xi_{ij}^2 - \bE[\xi_{ij}^2]:=\xi_{ij}^2-m$ are i.i.d. with mean zero and finite variance denoted by $\sigma^2$ for $1\le i\le j\le N$. By \eqref{eq-4.4} and the Markov inequality, we have
\begin{align} \label{eq-4.4'}
	&\quad \bP \left(  L_N \notin \Big\{ \nu \in C([0,T], \mathbf P(\bR)): \nu_t \in K, \forall t \in [0,T] \Big\} \right) \nonumber \\
	&= \bP \Big( \exists t \in [0,T], L_N(t) \notin K \Big) = \bP \left( \sup_{t \in [0,T]} \langle \varphi, L_N(t) \rangle > C_0 + M_0 \right) \nonumber \\
	&\le \bP \left( \sup_{t \in [0,T]} \left| \langle \varphi, L_N(t) \rangle - \langle \varphi, L_N(0) \rangle \right| > M_0 \right) \nonumber \\
	&\le \bP \left( \frac{4}{N(N+1)}\|\varphi'\|_{\infty}^2 T^{2H-2\e}  \sum_{i\le j} \xi_{ij}^2  > M_0^2 \right) \nonumber \\
	&= \bP \left( \dfrac{2}{N(N+1)} \sum_{i\le j} \xi_{ij}^2 - m > \dfrac{M_0^2}{2\|\varphi'\|_{\infty}^2 T^{2H-2\e}} - m \right) \nonumber \\
	&\le \left( \dfrac{M_0^2}{2\|\varphi'\|_{\infty}^2 T^{2H-2\e}} - m \right)^{-2} \bE \left[ \left( \dfrac{2}{N(N+1)} \sum_{i\le j} \xi_{ij}^2 - m \right)^2 \right] \nonumber \\
	&= \left( \dfrac{M_0^2}{2\|\varphi'\|_{\infty}^2 T^{2H-2\e}} - m \right)^{-2} \dfrac{4}{N^2(N+1)^2} \bE \left[ \left( \sum_{i\le j}\big[\xi_{ij}^2 - m\big] \right)^2 \right] \nonumber \\
	&= \left( \dfrac{M_0^2}{2\|\varphi'\|_{\infty}^2 T^{2H-2\e}} - m \right)^{-2} \dfrac{4}{N^2(N+1)^2} \bE \left[ \sum_{i\le j} \big(\xi_{ij}^2 - m\big)^2 \right] \nonumber \\
	&= \left( \dfrac{M_0^2}{2\|\varphi'\|_{\infty}^2 T^{2H-2\e}} - m \right)^{-2} \dfrac{2}{N(N+1)} \sigma^2.
\end{align}
If we choose $M_0 = 2\|\varphi'\|_{\infty} T^{H-\e} (1+m)$, then \eqref{eq-4.4'} becomes
\begin{align} \label{eq-4.5}
	&\quad \bP \left( L_N \notin \Big\{ \nu \in C([0,T], \mathbf P(\bR)): \nu_t \in K, \forall t \in [0,T] \Big\} \right) \nonumber \\
	&\le \dfrac{2\sigma^2}{N(N+1) (2m^2 + 3m + 2)^2} \le \dfrac{8\sigma^2}{N(N+1)} .
\end{align}
Similarly, by \eqref{eq-4.4} and the Markov inequality, we have
\begin{align} \label{eq-4.6}
	&\quad \bP \left(  L_N \notin C_T(f_i) \right) \nonumber \\
	&\le \sum_{n=1}^{\infty} \bP \left( L_N \notin \left\{ \nu \in C([0,T], \mathbf P(\bR)): \sup_{s,t \in [0,T], |t-s| \le n^{-p_0}} |\langle f_i, \nu_t \rangle - \langle f_i, \nu_s \rangle| \le M_i n^{1 - p_0(H-\e)} \right\} \right) \nonumber \\
	&= \sum_{n=1}^{\infty} \bP \left( \sup_{s,t \in [0,T], |t-s| \le n^{-p_0}} |\langle f_i, L_N(t) \rangle - \langle f_i, L_N(s) \rangle| > M_i n^{1 - p_0(H-\e)} \right) \nonumber \\
	&\le \sum_{n=1}^{\infty} \bP \left( 2\|f_i'\|_{\infty}^2 n^{-p_0(2H-2\e)} \left( \dfrac{2}{N(N+1)} \sum_{i\le j} \xi_{ij}^2 \right) > M_i^2 n^{2 - 2p_0(H-\e)} \right) \nonumber \\
	&= \sum_{n=1}^{\infty} \bP \left( \dfrac{2}{N(N+1)} \sum_{i\le j} \xi_{ij}^2 - m > \dfrac{M_i^2 n^2}{2\|f_i'\|_{\infty}^2} - m \right) \nonumber \\
	&\le \sum_{n=1}^{\infty} \left( \dfrac{M_i^2 n^2}{2\|f_i'\|_{\infty}^2} - m \right)^{-2} \bE \left[ \left( \dfrac{2}{N(N+1)} \sum_{i\le j} \xi_{ij}^2 - m \right)^2 \right] \nonumber \\
	&= \sum_{n=1}^{\infty} \left( \dfrac{M_i^2 n^2}{2\|f_i'\|_{\infty}^2} - m \right)^{-2} \dfrac{4}{N^2(N+1)^2} \bE \left[ \sum_{i\le j} (\xi_{ij}^2 - m)^2 \right] \nonumber \\
	&\le \sum_{n=1}^{\infty} \left( \dfrac{M_i^2 n^2}{2\|f_i'\|_{\infty}^2} - m \right)^{-2} \dfrac{2}{N(N+1)} \sigma^2.
\end{align}
Now, we choose $M_i = 2 i \|f_i'\|_{\infty} \gamma$, where $\gamma$ is a positive real number such that $2\gamma^2>m$. Then \eqref{eq-4.6} becomes
\begin{align} \label{eq-4.7'}
	\bP \left( L_N \notin C_T(f_i) \right)
	&\le \sum_{n=1}^{\infty} \dfrac{2\sigma^2}{N(N+1)(2i^2n^2{\gamma}^2 - m)^2}.
\end{align}
Hence, by the definition of $\mathfrak{C}$, \eqref{eq-4.5} and \eqref{eq-4.7'}, we have
\begin{align} \label{eq-4.8}
	\sum_{N=1}^{\infty} \bP \left( L_N \notin \mathfrak{C} \right)
	&\le \sum_{N=1}^{\infty} \bP \left( L_N \notin \left\{ \nu \in C([0,T], \mathbf P(\bR)): \nu_t \in K, \forall t \in [0,T] \right\} \right) \nonumber \\
	&\qquad + \sum_{N=1}^{\infty} \sum_{i=1}^{\infty} \bP \left( L_N(t) \notin C_T(f_i) \right) \nonumber \\
	&\le \sum_{N=1}^{\infty} \dfrac{8\sigma_D^2}{N(N+1)} + \sum_{N=1}^{\infty} \sum_{i=1}^{\infty} \sum_{n=1}^{\infty} \dfrac{2\sigma^2}{N(N+1)(2i^2n^2\gamma^2 - m)^2} \nonumber \\
	&<\infty.
\end{align}
Therefore, by the Borel-Cantelli Lemma and \eqref{eq-4.8}, we have
\begin{align*}
	\bP \left( \limsup_{N \rightarrow \infty} \left\{ L_N \notin \mathfrak{C} \right\} \right) = 0.
\end{align*}
The proof is concluded. 
\end{proof}

\subsection{Limit of  empirical  spectral distributions}

Recall that  the celebrated semicircle  distribution $\mu_{sc}(dx)$ on $[-2,2]$ has density function
\begin{align}\label{e:semi-circ}
	p_{sc}(x) = \dfrac{\sqrt{4-x^2}}{2\pi} 1_{[-2,2]}(x).
\end{align}

\begin{theorem} \label{Thm-LED}
Suppose that the conditions in Theorem \ref{Thm-tightness} hold. We
also assume that $\bE [|X_0|^2] < \infty$ and denote $m_t = \bE [X_t]$
and $d_t = (\bE[|X_t|^2] - m_t^2)^{1/2}$. Then for any $T > 0$, the
sequence  $\{L_N(t), t\in[0,T]\}_{N \in \bN}$
converges to $\mu = \{\mu_t, t\in[0,T]\}$ in $C([0,T],\mathbf P(\bR))$
almost surely, as $N\to \infty$. The limit measure $\mu_t$ has density
$p_t(x) = p_{sc}(x/d_t)/d_t$, where $p_{sc}(x)$ is given by \eqref{e:semi-circ}.
\end{theorem}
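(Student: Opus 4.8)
The plan is to combine the almost sure relative compactness already furnished by Theorem \ref{Thm-tightness} with an identification of every subsequential limit. Since the candidate limit $\mu=\{\mu_t\}$ is deterministic, it suffices to prove that, almost surely, the only possible limit point of $\{L_N\}_N$ in $C([0,T],\mathbf P(\bR))$ is $\mu$. I would reduce this to two ingredients: (a) for each fixed $t$, the marginal $L_N(t)$ converges weakly, almost surely, to the scaled semicircle law $\mu_t$ with density $p_{sc}(x/d_t)/d_t$; and (b) the map $t\mapsto \mu_t$ is continuous on $[0,T]$. Granting (a) and (b), I fix a countable dense set $\{t_j\}\subset[0,T]$ and intersect the corresponding almost sure events with the relative compactness event; on this full-measure event, any subsequential limit $\nu$ satisfies $\nu_{t_j}=\mu_{t_j}$ for all $j$, whence $\nu=\mu$ by continuity of both paths, so that the whole sequence converges to $\mu$.

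For ingredient (a), fix $t$ and note that at a single time the entries $\{X_{ij}(t)\}_{i\le j}$ are i.i.d.\ copies of $X_t$ with mean $m_t$ and variance $d_t^2$, so $Y^N(t)$ is a symmetric matrix of Wigner type. The key point is that the entries are \emph{not} centered, so I first peel off the mean: writing $X_{ij}(t)=m_t+\tilde X_{ij}(t)$ and recalling the normalisation \eqref{matrix entries Wigner}, one obtains
\[
Y^N(t)=\tilde Y^N(t)+\frac{m_t}{\sqrt N}\,\mathbf 1\mathbf 1^{\top}+\frac{(\sqrt2-1)m_t}{\sqrt N}\,I,
\]
where $\tilde Y^N(t)$ is the centered Wigner matrix built from $\tilde X_{ij}(t)$. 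The rank-one term $\frac{m_t}{\sqrt N}\mathbf 1\mathbf 1^{\top}$ moves at most one eigenvalue (it creates a single outlier of order $m_t\sqrt N$), so by the rank inequality for empirical distribution functions it alters $L_N(t)$ by at most $1/N$ in L\'evy distance; the term $\frac{(\sqrt2-1)m_t}{\sqrt N}I$ merely shifts all eigenvalues by $O(N^{-1/2})$. Hence $L_N(t)$ and the empirical measure of $\tilde Y^N(t)$ share the same weak limit. To the centered matrix $\tilde Y^N(t)$, whose off-diagonal entries have variance $d_t^2$ and whose diagonal entries are $O(N^{-1/2})$, I apply the classical Wigner semicircle theorem, which under the finite second moment $\bE|X_t|^2<\infty$ (guaranteed by H1 or H2 together with $\bE|X_0|^2<\infty$) yields almost sure weak convergence to the semicircle law of variance $d_t^2$, namely $\mu_t$.

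For ingredient (b), continuity of $t\mapsto\mu_t$ follows from continuity of $t\mapsto d_t$: by Corollary \ref{Coro-upper bound}, both $\bE|X_t|^2$ and $m_t=\bE[X_t]$ are continuous in $t$, so $d_t=(\bE|X_t|^2-m_t^2)^{1/2}$ is continuous, and the scaling map $d\mapsto \mu_{sc}(\cdot/d)/d$ is continuous into $\mathbf P(\bR)$ with the weak topology. Assembling the pieces as in the first paragraph then yields almost sure convergence in $C([0,T],\mathbf P(\bR))$, and the density recorded in Paragraph~2 is exactly the claimed $p_t(x)=p_{sc}(x/d_t)/d_t$.

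The step I expect to be most delicate is the non-centering of the entries. A direct moment computation gives $\int x^2\,\mu_t(dx)=m_t^2+d_t^2$ rather than the correct $d_t^2$, precisely because the mean produces an eigenvalue of order $m_t\sqrt N$ that escapes to infinity and corrupts the method of moments while leaving the \emph{weak} limit unchanged; this forces the argument to run through the rank-one perturbation bound in the weak topology rather than through convergence of moments. A secondary subtlety is the mode of marginal convergence: almost sure relative compactness together with merely \emph{in-probability} marginal convergence does not exclude spurious limit points, so I require the convergence in (a) to hold \emph{almost surely}, which is why I invoke the almost sure form of the semicircle law and restrict to a countable dense set of times before passing to the limit.
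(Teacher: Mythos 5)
Your proposal is correct and follows essentially the same route as the paper's proof: for fixed $t$ the paper also writes $Y^N(t)$ as a (scaled, centered) Wigner matrix plus the rank-one perturbation $\frac{m_t}{\sqrt N}E_N$, invokes the almost sure semicircle law together with the rank inequality of \cite[Exercise 2.4.4]{Tao2012}, uses Corollary \ref{Coro-upper bound} for continuity of $d_t$, and then combines the almost sure relative compactness of Theorem \ref{Thm-tightness} with uniqueness of subsequential limits. The only (welcome) refinement on your side is restricting the fixed-time identification to a countable dense set of times before invoking path continuity, which handles the uncountable intersection of null sets more carefully than the paper's wording; otherwise the arguments coincide.
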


\begin{proof}
First, for fixed $t\in[0,T]$, we prove the almost sure weak convergence of the empirical measure $L_N(t)$. 

Note that by Corollary \ref{Coro-upper bound}, $m_t$ and $d_t$ are continuous functions of $t$ on  $[0,T]$.  Let $\widetilde{Y}^N(t) = \left( \widetilde{Y}_{ij}^N(t) \right)_{1 \le i, j \le N}$ be a symmetric matrix with entries
\begin{align}\label{eq:Y1}
	\widetilde{Y}_{ij}^N(t) =
	\dfrac{Y_{ij}^N(t) - m_t/\sqrt{N}}{d_t}, ~~ 1 \le i \le j \le N.
\end{align}
Then $\widetilde{Y}^N(t)$ is a Wigner matrix. Let $\tilde{\lambda}_1^N(t) \le \cdots \le \tilde{\lambda}_N^N(t)$ be the eigenvalues of $\widetilde{Y}^N(t)$ and $\widetilde{L}_N(t)(dx) = \sum_{i=1}^N \delta_{\tilde{\lambda}_i^N(t)} (dx) / N$ be the empirical spectral measure. By Lemma \ref{lemma-Wigner semicircle},
$\widetilde{L}_N(t)(dx)$ converges weakly to  the semicircle distribution $p_{sc}(x) dx$ almost surely for all $0 \le t \le T$. Hence  the empirical measure of the eigenvalues of $d_t \widetilde{Y}^N(t)$ converges weakly to $\frac1{d_t}p_{sc}\left(\frac{x}{d_t}\right)dx$ almost surely.  

Note that by  \eqref{eq:Y1}, $Y^N(t) = d_t \widetilde{Y}^N(t) + \dfrac{m_t}{\sqrt{N}} E_N$, where $E_N$ is an $N \times N$ matrix with unit entries. Then by \cite[Exercise 2.4.4]{Tao2012}, we can conclude that the empirical distribution
$	L_N(t)(dx)$  converges weakly to $\dfrac{1}{d_t} p_{sc}\left(\dfrac{x}{d_t}\right) dx$, almost surely, for all $t \in [0,T]$.

Now, we show that $\{L_N(t), t \in [0,T]\}_{N \in \bN}$ converges weakly to $ \{\mu_t, t \in [0,T]\}_{N \in \bN}$ almost surely. 

Let $\{L_{N_i}(t), t \in [0,T]\}_{N \in \bN}$ be an arbitrary convergent subsequence with the limit $\{\mu_t, t\in[0,T]\}$, then for $t \in [0,T]$, $\mu_t(dx)= p_{sc}(x/d_t)/d_t dx$.  Therefore, noting that Theorem \ref{Thm-tightness} yields that the sequence $\{L_N(t), t \in [0,T]\}_{N \in \bN}$ is relatively compact almost surely in $C([0,T],\mathbf P(\bR))$, any  subsequence of  $\{L_N(t), t \in [0,T]\}_{N \in \bN}$ has a convergent subsequence with the unique limit $\{\mu_t(dx) = p_{sc}(x/d_t)/d_t dx, t\in[0,T]\}$. 
This  implies that the total sequence $\{L_N(t), t \in [0,T]\}_{N \in \bN}$ converges in $C([0,T],\mathbf P(\bR))$ with the limit $\{\mu_t(dx) = p_{sc}(x/d_t)/d_t dx, t\in[0,T]\}$, almost surely. 
\end{proof}

\begin{remark} \label{remark-6}
If $\sigma(x) =1$, $b(x) = 0$ and $X_0 = 0$, then the solution to the SDE \eqref{SDE} is the fractional Brownian motion $X_t = B_t^H$, and Theorem \ref{Thm-LED} implies that the the empirical spectral measure converges weakly to the scaled semicircle distribution with density $p_t(x) = p_{sc}(x/t^H)/t^H$ {\em almost surely}. This improves the results in \cite{Pardo2016}, where the convergence of the empirical spectral measure {\em in law} is obtained.
\end{remark}

\begin{remark} \label{remark-1'}
The Stieltjes transform $G_t(z)$ of the limiting measure $\mu_t$ is
\begin{align*}
	G_t(z) = \int \dfrac{\mu_t(dx)}{z-x}
	= \int \dfrac{p_{sc}(x/{d_t})/d_t}{z-x} dx
	= \int \dfrac{p_{sc}(x)}{z-d_t x} dx
	= \dfrac{1}{d_t} G_{sc}(z/d_t),
\end{align*}
where
\begin{align*}
	G_{sc}(z) = \int \dfrac{p_{sc}(x)}{z-x} dx,
\end{align*}
is the Stieltjes transform of the semi-circle distribution. If we assume that the variance $d_t^2$ of the solution $X_t$ is continuously differentiable on $(0,T)$,  then we have
\begin{align}
	\partial_t G_t(z)
	&= 	\partial_t  \int \dfrac{p_{sc}(x)}{z-d_t x} dx
	= d_t' \int \dfrac{xp_{sc}(x)}{(z-d_t x)^2} dx\notag \\
	&= \dfrac{d_t'}{d_t} \int \left( -\dfrac{1}{z - d_t x} + \dfrac{z}{(z - d_t x)^2} \right) p_{sc}(x) dx \notag\\
	&= - \dfrac{d_t'}{d_t} G_t(z) - \dfrac{d_t'}{d_t} z \partial_z G_t(z).\label{eq:G-t}
\end{align}
By \cite[Lemma 2.11]{Bai2010}, it is easy to get
\begin{align*}
	d_t^2 G_t(z)^2 = z G_t(z) - 1,
\end{align*}
and by \eqref{eq:G-t},
\begin{align} \label{equation for Stieltjes-SC}
	\partial_t  G_t(z)
	&= - \dfrac{d_t'}{d_t} (G_t(z) + z \partial_z G_t(z))=- \dfrac{d_t'}{d_t} \partial_z(zG_t(z)-1)\notag \\
	&= - d_t d_t' \partial_z (G_t(z)^2)
	= - (d_t^2)' G_t(z) \partial_z G_t(z).
\end{align}
For the case $X_t = B_t^H$ with $d_t^2 = t^{2H}$, equation \eqref{equation for Stieltjes-SC} becomes
\begin{align} \label{equation for Stieltjes-SC-fbm}
	\partial_t  G_t(z)
	= - 2H t^{2H-1} G_t(z) \partial_z G_t(z).
\end{align}
Denoting $F_t(z)=G_{t^{1/2H}}(z)$, by change of variable and \eqref{equation for Stieltjes-SC-fbm}, one can deduce that $F_t(z)$ satisfies the complex Burgers' equation
\begin{align*}
	\partial_t F_t(z) = -F_t(z) \partial_z F_t(z).
\end{align*}
This relationship was obtained in \cite{Jaramillo2019}.
\end{remark}

\subsection{Complex case} \label{subsec:complex}

In this subsection, we consider the following 2-dimensional SDE for $Z_t=(Z_t^{(1)}, Z_t^{(2)})$,
\begin{align} \label{SDE-complex}
	dZ_t = \tilde{\sigma} (Z_t) \circ dB_t^H + \tilde{b}(Z_t) dt,
\end{align}
with initial value $Z_0$ that is independent of $B_t^H$. Here $\tilde{b} : \bR^2 \rightarrow \bR^2$, $\tilde{\sigma} : \bR^2 \rightarrow \bR^{2\times 2}$ are continuously differentiable functions  and $B_t^H$ is a 2-dimensional fractional Brownian motion. By \cite{Lyons1994},  there exists a unique solution to SDE \eqref{SDE-complex}, if  $\tilde{\sigma}$ and $\tilde{b}$ have bounded derivatives which are H\"{o}lder continuous of order greater than $1/H-1$. 

Denote by $\iota$ the imaginary unit.  Let $\{Z_{kl}\}_{k,l \ge 1}$ be i.i.d. copies of $Z_t^{(1)} + \iota Z_t^{(2)}$ and $Z^N(t) = \left( Z_{kl}^N(t) \right)_{1\le k,l \le N}$ be a Hermitian $N \times N$ matrix with entries
\begin{align} \label{matrix entries Wigner-complex}
	Z_{kl}^N(t) =
	\begin{cases}
		\dfrac{1}{\sqrt{N}} Z_{kl}(t), & 1 \le k < l \le N, \\
		\dfrac{1}{\sqrt{N}} X_{kk}(t), & 1 \le k \le N.
	\end{cases}
\end{align}
Here, $\{X_{kk}(t)\}$ are i.i.d. copies of the real-valued process $X_t$ satisfying \eqref{SDE} and independent of the family $\{Z_{kl}(t)\}_{k,l \ge 1}$. Let $\lambda_1^N(t) \le \cdots \le \lambda_N^N(t)$ be the eigenvalues of $Z^N(t)$ and denote the empirical spectral measure by 
\begin{align*}
	L_N(t)(dx) = \dfrac{1}{N} \sum_{k=1}^N \delta_{\lambda_k^N(t)}(dx).
\end{align*}

\begin{theorem} \label{Thm-LED complex}
Suppose that the coefficient functions $\tilde{\sigma}$, $\tilde{b}$, $\sigma$ and $b$ have bounded (partial) derivatives which are H\"{o}lder continuous of order greater than $1/(H-\e)-1$. Besides, assume that among the following conditions,
\begin{itemize}
	\item [$(a_1)$] $\tilde{\sigma}$ and $\tilde{b}$ are bounded and $\bE[\|Z_0\|^2] < \infty$;
	\item [$(a_2)$] $\|(\tilde{\sigma}_x, \tilde \sigma_y)\|_{L^{\infty}(\bR^2)} + \|(\tilde{b}_x, \tilde b_y)\|_{L^{\infty}(\bR^2)} > 0$, $\bE[\|Z_0\|^4] < \infty$;
	\item [$(b_1)$] $\sigma$ and $b$ are bounded and $\bE[|X_0|^2] < \infty$;
	\item [$(b_2)$] $\|\sigma'\|_{\infty} + \|b'\|_{\infty} > 0$, $\bE[|X_0|^4] < \infty$;
\end{itemize}
$(a_1)$ or $(a_2)$ holds and $(b_1)$ or $(b_2)$ holds. Furthermore, suppose that there exists a positive function $\varphi(x) \in C^1(\bR)$ with bounded derivative, such that $\lim_{|x| \rightarrow \infty} \varphi(x) = + \infty$ and
\begin{align*}
	\sup_{N \in \bN} \langle \varphi, L_N(0) \rangle \le C_0,
\end{align*}
for some positive constant $C_0$ almost surely.
	
Then for any $T > 0$, $\bE [|X_t|^2 + \|Z_t\|^2] < \infty$ for $t \in [0,T]$, and the sequence  $\{L_N(t), t\in[0,t]\}_{N \in \bN}$ converges to $\mu = \{\mu_t, t\in[0,T]\}$ in $C([0,T],\mathbf P(\bR))$ almost surely. The limiting measure $\mu_t$ has density $p_t(x) = p_{sc}(x/d_Z(t))/d_Z(t)$, where $d_Z^2(t)$ is the variance of the solution $Z_t$ to SDE \eqref{SDE-complex}. 
\end{theorem}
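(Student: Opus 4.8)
The plan is to mirror the strategy of Theorems \ref{Thm-tightness} and \ref{Thm-LED}, splitting the argument into relative compactness of $\{L_N(t)\}_{N\in\bN}$ in $C([0,T],\mathbf P(\bR))$ and identification of the unique subsequential limit via the limiting spectral distribution at each fixed $t$. The crucial structural point is that the entries $Z^N_{kl}(t)$ built from \eqref{matrix entries Wigner-complex} are, for fixed $t$, a Hermitian Wigner matrix once we center and rescale: the off-diagonal entries $Z_{kl}(t)=Z^{(1)}_{kl}(t)+\iota Z^{(2)}_{kl}(t)$ are i.i.d.\ complex with a common variance $d_Z^2(t)$, and the diagonal entries are real copies of $X_t$. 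So the overall architecture is the same, with $d_t$ replaced by $d_Z(t)$.

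First I would establish the pathwise H\"older estimates for the increments of the complex process. The key observation is that Theorem \ref{Thm-Holder1} (under $(a_1)$) and Theorem \ref{Thm-Holder2} (under $(a_2)$) apply verbatim to the $2$-dimensional SDE \eqref{SDE-complex}: the fractional-calculus bounds \eqref{eq-complement2}--\eqref{eq-complement5} are norm estimates that carry over to the $\bR^2$-valued setting once $|\cdot|$ is read as the Euclidean norm $\|\cdot\|$ and $\sigma,\sigma'$ as $\tilde\sigma,(\tilde\sigma_x,\tilde\sigma_y)$. This yields a random variable $\tilde\xi$ with $\bE|\tilde\xi|^p<\infty$ (finite fourth moment under $(a_2)$) such that $\|Z_t-Z_s\|\le\tilde\xi|t-s|^{H-\e}$ a.s.; combined with the analogous bound for the diagonal process $X_t$ from Theorem \ref{Thm-LED}, I obtain a single random variable $\xi$ (with the requisite integrability, hence finite variance) controlling all entry increments. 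The Hoffman--Wielandt bound (Lemma \ref{lemma-Hoffman ineq}) then gives, exactly as in \eqref{eq-4.4}, a bound on $|\langle f,L_N(t)\rangle-\langle f,L_N(s)\rangle|^2$ by $\|f'\|_\infty^2(t-s)^{2H-2\e}$ times a normalized sum of i.i.d.\ copies $\xi_{kl}^2$. The Markov/Borel--Cantelli argument of Theorem \ref{Thm-tightness}, which only uses mean-zero finite-variance summands, then delivers the almost sure relative compactness in $C([0,T],\mathbf P(\bR))$ without modification.

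Second, for fixed $t$ I would identify the limiting empirical measure. Centering and rescaling as in \eqref{eq:Y1}, set $\widetilde Z^N(t)$ with entries $(Z^N_{kl}(t)-\bE Z^N_{kl}(t))/d_Z(t)$, a normalized Hermitian Wigner matrix whose empirical spectral distribution converges weakly a.s.\ to the semicircle law $p_{sc}$ by Lemma \ref{lemma-Wigner semicircle}. Here a small point needs care: the diagonal entries come from $X_t$ whose variance $d_t^2$ may differ from $d_Z^2(t)$, but since the universality of the semicircle law is insensitive to the diagonal distribution (only finiteness of the second moment of the diagonal and the common off-diagonal variance matter), the limit is still the semicircle law scaled by $d_Z(t)$. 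Writing $Z^N(t)=d_Z(t)\widetilde Z^N(t)+N^{-1/2}M_t$ for a deterministic finite-rank-after-rescaling perturbation $M_t$ encoding the means, the rank/resolvent perturbation result (\cite[Exercise 2.4.4]{Tao2012}) shows $L_N(t)$ converges weakly a.s.\ to $p_{sc}(x/d_Z(t))/d_Z(t)$. Finally, since every convergent subsequence of $\{L_N\}$ in $C([0,T],\mathbf P(\bR))$ must have this measure as its value at each $t$, the subsequential limit is unique, and relative compactness upgrades pointwise convergence to convergence of the whole sequence in $C([0,T],\mathbf P(\bR))$, almost surely.

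The main obstacle I anticipate is the verification that the centered, rescaled matrix $\widetilde Z^N(t)$ genuinely satisfies the hypotheses of the complex Wigner semicircle law uniformly enough to conclude the \emph{almost sure} convergence for all $t\in[0,T]$ simultaneously, given that the diagonal and off-diagonal families have distinct variances and that $d_Z(t)$ must be shown positive and continuous in $t$ (the latter following from Corollary \ref{Coro-upper bound} applied to the components of $Z_t$). Everything else is a faithful transcription of the real-symmetric arguments, so the real work is bookkeeping the complex/Hermitian structure and confirming that the off-diagonal second-moment condition $\bE|Z_{kl}(t)|^2=d_Z^2(t)$ is the relevant scale while the diagonal contributes negligibly to the bulk.
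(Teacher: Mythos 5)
Your proposal follows essentially the same route as the paper's proof: H\"older estimates for the two-dimensional SDE \eqref{SDE-complex} obtained by rerunning Theorems \ref{Thm-Holder1}/\ref{Thm-Holder2}, entrywise increment control feeding into the Hoffman--Wielandt/Borel--Cantelli argument of Theorem \ref{Thm-tightness} for almost sure relative compactness, continuity of $m_Z(t)$ and $d_Z^2(t)$ via Corollary \ref{Coro-upper bound}, pointwise identification of the limit by centering/rescaling to a Hermitian Wigner matrix, and uniqueness of subsequential limits to upgrade to convergence in $C([0,T],\mathbf P(\bR))$. Two corrections are needed, both at the perturbation step. First, the semicircle law you should invoke is the Hermitian one, Lemma \ref{lemma-Wigner semicircle-complex} (not Lemma \ref{lemma-Wigner semicircle}); as you correctly observe, it only requires the diagonal entries to have bounded mean and variance, so the mismatch between $d_t^2$ and $d_Z^2(t)$ is harmless. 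Second, your matrix $M_t$ is \emph{not} ``finite-rank-after-rescaling'': centering the diagonal (mean $m_t$) and off-diagonal (mean $m_Z(t)$) entries yields, when $m_Z(t)$ is real, $M_t=m_Z(t)E_N+(m_t-m_Z(t))I_N$, i.e.\ a rank-one piece \emph{plus a multiple of the identity}, which has full rank; Exercise 2.4.4 of \cite{Tao2012} (low-rank stability) alone does not dispose of the identity part. What saves it is that $N^{-1/2}(m_t-m_Z(t))I_N$ has operator norm tending to zero, so you also need the operator-norm stability result (Exercise 2.4.3), and indeed the paper cites both exercises at this point. (If $m_Z(t)$ has nonzero imaginary part, the Hermitian mean matrix additionally contains $\iota\,\mathrm{Im}(m_Z(t))(U-U^{\intercal})/\sqrt{N}$ with $U$ the strictly upper-triangular all-ones matrix, which is neither low rank nor small in norm; handling it requires splitting off the $O(\sqrt{N})$ eigenvalues exceeding any fixed $\varepsilon$ as a rank-$o(N)$ part --- a subtlety the paper's displayed decomposition also glosses over.)
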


\begin{proof}
The proof is similar to the real case, which is sketched below.
	
First of all, following the proof of Theorem \ref{Thm-Holder1} or Theorem \ref{Thm-Holder2}, we can still establish the pathwise H\"{o}lder continuity for the solution $Z_t = (Z_t^{(1)}, Z_t^{(2)})$ to the SDE \eqref{SDE-complex}.  More specifically, for the case that condition $(a_1)$ holds, we can obtain
\begin{align*}
	&\quad \left| Z_t^{(1)} - Z_s^{(1)} \right| + \left| Z_t^{(2)} - Z_s^{(2)} \right| \nonumber \\
	&\le C(H,\e) \|\tilde{\sigma}\|_{\infty} \left[ \|B^H\|_{0,T;H-\e} (t-s)^{H-\e} \vee \|B^H\|_{0,T;H-\e}^{1/(H-\e)} (t-s) \|\tilde{\sigma}'\|_{\infty}^{1/(H-\e)-1} \right] \nonumber \\
	&\quad + 2\|\tilde{b}\|_{\infty} (t-s),
\end{align*}
and for the case of $(a_2)$, 
\begin{align*}
	&\quad \left| Z_t^{(1)} - Z_s^{(1)} \right| + \left| Z_t^{(2)} - Z_s^{(2)} \right| \nonumber \\
	&\le C(H,\e, \tilde{\sigma}, \tilde{b}, T)^{1+(\|B^H\|_{s,t;H-\e})^{1/(H-\e)}}  (||Z_0||+1) (t-s)^{H-\e},
\end{align*}
for all $T > 0$, for all $0 \le s < t \le T$. Thus, by Lemma \ref{lemma-Holder continuity of fbm} and the moment assumption on $Z_0$, we have that for both cases, there exists a positive random variable $\zeta$ with finite second moment, such that
\begin{align} \label{eq-7.3}
	\left| Z_t^{(1)} - Z_s^{(1)} \right|^2 + \left| Z_t^{(2)} - Z_s^{(2)} \right|^2
	\le \zeta (t-s)^{2H-2\e}, ~~\text{a.s.}
\end{align}

Similar to \eqref{eq-4.4}, we have
\begin{align*}
	\quad \left| \langle f, L_N(t) \rangle - \langle f, L_N(s) \rangle \right|^2
	\le \dfrac{1}{N^2} \|f'\|_{\infty}^2 (t-s)^{2H-2\e} \left( \sum_{k=1}^N \xi_{kk}^2 + 2 \sum_{k<l} \zeta_{kl}^2 \right),
\end{align*}
for any $f \in C^1(\bR)$ with bounded derivative.  Then following the same approach used in the proof of Theorem \ref{Thm-tightness}, we can show that the sequence of empirical spectral measures $\{L_N(t), t \in [0,T]\}_{N \in \bN}$ is relatively compact in $C([0,T],\mathbf P(\bR))$ almost surely.
	
Following the proof of Corollary \ref{Coro-upper bound}, it is easy to see that the mean $m_Z(t) = \bE[Z_t]$  and the variance $d_Z^2(t)  =\bE[\|Z_t-m_Z(t)\|^2]$ are continuous functions of $t$. 
	
Next, we introduce a Hermitian matrix $\widetilde{Z}^N(t) = \left( \widetilde{Z}_{kl}^N(t) \right)_{1 \le k, l \le N}$ satisfying
\begin{align*}
	Z^N(t) = \dfrac{m_Z(t)}{\sqrt{N}} \left( E_N - I_N \right) + \frac{m_t}{\sqrt N}I_N+ d_Z(t) \widetilde{Z}^N(t).
\end{align*}
Hence,  $\widetilde{Z}^N(t)$ is a Hermitian Wigner  matrix for all $t \in [0,T]$.  Finally, by  \cite[Exercise 2.4.3, Exercise 2.4.4]{Tao2012}, we can conclude that the almost-sure limit of the empirical distribution of the eigenvalues of $Z^N(t)$ coincides with that of $d_Z \widetilde{Z}^N(t)$ for all $t \in [0,T]$.  Therefore,  by Lemma \ref{lemma-Wigner semicircle-complex}, $\{L_N(t), t \in [0,T]\}_{N \in \bN}$ converges towards $\{\mu_t, t\in[0,T]\}$ in $C([0,T], \mathbf P(\bR))$ with density $\mu_t(dx)=p_t(x)dx = p_{sc}(x/d_Z(t))/d_Z(t)dx$.
\end{proof}

\begin{remark} \label{reamrk-3'}

Similar to Remark \ref{remark-1'}, the Stieltjes transform of the limiting measure $\mu_t$ satisfies the differential equation \eqref{equation for Stieltjes-SC} with $d_t$ replaced by $d_Z(t)$.
\end{remark}

\section{High-dimensional limit  for symmetric matrices with dependent entries} \label{sec:dependent}

Let $\{X_{(i,j)}(t)\}_{i,j \in \bZ}$ be i.i.d. copies of $X_t$, the solution of  \eqref{SDE}.  Fix a finite index set $I \subset \bZ^2$ and a family of constants $\{a_r: r \in I\}$. Let $|I| = \max
\{|i_1-i_2| \vee |j_1-j_2|: (i_1,j_1), (i_2,j_2) \in I\}$  and $\#I$ be the
cardinality of  $I$. Note that $\#I \le (2|I|+1)^2$. Let $R^N(t) = \left( R_{ij}^N(t) \right)_{1 \le i,j \le N}$ be an $N \times N$ real symmetric matrix with entries
\begin{align} \label{matrix entries dependent}
	R_{ij}^N(t) = \dfrac{1}{\sqrt{N}} \sum_{r \in I} a_r X_{(i,j) + r}(t), \quad 1 \le i \le j \le N.
\end{align}
Let $\lambda_1^N(t) \le \cdots \le \lambda_N^N(t)$ be the eigenvalues of $R^N(t)$, and
\begin{align*}
	L_N(t)(dx) = \dfrac{1}{N} \sum_{i=1}^N \delta_{\lambda_i^N(t)}(dx)
\end{align*}
be the empirical spectral measure of $R^N(t)$.

\begin{theorem} \label{Thm-LED dependence}
Suppose that the conditions in Theorem \ref{Thm-tightness} hold. Then for any $T > 0$, the sequence  $\{L_N(t), t\in[0,T]\}_{N \in \bN}$ converges to $ \{\mu_t, t\in[0,T]\}$ in $C([0,T],\mathbf P(\bR))$ almost surely. The Stieltjes transform $S_t(z)=\int (z-x)^{-1}\mu_t(dx)$ of the limit measure is given by, for $z\in \bC\backslash \bR$,
\begin{align*}
	S_t(z) = \int_0^1 h_t(x,z) dx,
\end{align*}
where $h_t(x,z)$ is the solution to the equation
\begin{align*}
	h_t(x,z) = \left( -z + \int_0^1 f_t(x,y) h_t(y,z) dy \right)^{-1},
\end{align*}
with
\begin{align*}
	\quad f_t(x,y) = \sum_{k,l\in \bZ} \gamma_t(k,l) e^{-2\pi i(kx+ly)},
\end{align*}
where $\gamma_t(k,l) = \gamma_t(l,k) = d_t^2 \sum_{r \in I \cap (I + (k,l))} a_r a_{r-(k,l)}$ for $k \le l$.
\end{theorem}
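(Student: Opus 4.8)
\emph{Relative compactness.} The plan is to reproduce the argument of Theorem \ref{Thm-tightness}, the only new ingredient being the increment bound for the dependent entries. As in \eqref{eq-4.4}, for $f\in C^1(\bR)$ with bounded derivative the Hoffman--Wielandt inequality gives $|\langle f,L_N(t)\rangle-\langle f,L_N(s)\rangle|^2\le N^{-1}\|f'\|_\infty^2\,\bT(R^N(t)-R^N(s))^2$, so everything reduces to controlling $N^{-1}\bT(R^N(t)-R^N(s))^2=N^{-1}\sum_{i,j}(R^N_{ij}(t)-R^N_{ij}(s))^2$. Writing $R^N_{ij}(t)-R^N_{ij}(s)=N^{-1/2}\sum_{r\in I}a_r\big(X_{(i,j)+r}(t)-X_{(i,j)+r}(s)\big)$ and inserting the pathwise H\"older bound $|X_\cdot(t)-X_\cdot(s)|\le\xi_\cdot|t-s|^{H-\e}$ from Theorem \ref{Thm-Holder1} (resp.\ Theorem \ref{Thm-Holder2}), Cauchy--Schwarz over the finite set $I$ yields $N^{-1}\bT(R^N(t)-R^N(s))^2\le C_I|t-s|^{2H-2\e}\,N^{-2}\sum_{i,j}\sum_{r\in I}\xi_{(i,j)+r}^2$, with $C_I$ depending only on $\#I$ and $\sum_{r}a_r^2$. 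Since each variable $\xi_{(p,q)}^2$ appears in at most $\#I$ of these summands, the empirical average of the $\xi^2$ concentrates and the same Markov-inequality/Borel--Cantelli bookkeeping as in \eqref{eq-4.4'}--\eqref{eq-4.8} delivers the a.s.\ relative compactness of $\{L_N\}$ in $C([0,T],\mathbf P(\bR))$.

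\emph{Reduction at fixed time.} Next I would fix $t$ and determine the limiting spectral distribution of $R^N(t)$. Every entry has the common mean $\bE[R^N_{ij}(t)]=N^{-1/2}m_t\sum_{r\in I}a_r$, so the mean matrix is a scalar multiple of the all-ones matrix; being of rank one it does not affect the limiting empirical spectral distribution by the rank inequality (\cite[Exercise 2.4.4]{Tao2012}), and I may center $X$ and assume $m_t=0$. A direct computation (substituting $s=r-(k,l)$) then gives $\bE[R^N_{ij}(t)R^N_{i'j'}(t)]=N^{-1}\gamma_t\big((i'-i,j'-j)\big)$ with $\gamma_t$ exactly as in the statement, whose $2$-dimensional Fourier series is the nonnegative symbol $f_t(x,y)=d_t^2\big|\sum_{r\in I}a_r e^{-2\pi i(r_1x+r_2y)}\big|^2$; the stated identity $\gamma_t(k,l)=\gamma_t(l,k)$ reflects the symmetry $R^N_{ij}=R^N_{ji}$. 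Truncating the field $\{X_{(p,q)}(t)\}$ at scale $\e\sqrt N$ (which perturbs the empirical spectral distribution negligibly by the usual variance and rank bounds) I may assume all moments of the entries are finite.

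\emph{The self-consistent equation.} The core is to show that this centered, finite-range--dependent symmetric matrix has a deterministic limiting empirical spectral distribution whose Stieltjes transform solves the stated functional fixed-point system. I would use the moment method: expand $N^{-1}\bE\,\bT(R^N(t))^k$ over closed paths $i_1\to\cdots\to i_k\to i_1$, attach to each edge a shift $r^{(a)}\in I$ and the field site $(i_a,i_{a+1})+r^{(a)}$, and use that the i.i.d.\ field is centered so only configurations in which every site is visited at least twice contribute. Because the shifts $r\in I$ are bounded, sites can coincide only up to bounded offsets, so the power counting in $N$ is identical to the Wigner case and the surviving contributions come precisely from non-crossing pairings of the edges; resumming the bounded offsets turns the pair weights into the symbol $f_t$. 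The resulting limit moments are exactly those of the operator-valued semicircular element over $L^\infty([0,1])$ with covariance map $\eta_t(g)(x)=\int_0^1 f_t(x,y)g(y)\,dy$, whose operator-valued Cauchy transform $h_t(\cdot,z)$ obeys $h_t(x,z)=\big(-z+\int_0^1 f_t(x,y)h_t(y,z)\,dy\big)^{-1}$ and whose scalar trace is $S_t(z)=\int_0^1 h_t(x,z)\,dx$, as claimed. Existence, uniqueness and the Stieltjes-transform property of $h_t$ I would import from the theory of the Dyson equation for operator-valued semicircular elements (Helton--Rashidi Far--Speicher), the symbol being bounded and continuous; alternatively the matrix fits the finite-range--dependent framework of Anderson--Zeitouni, whose characterization one matches to the above.

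\emph{Concentration and gluing.} To promote this to a.s.\ convergence at fixed $t$, I would bound $\mathrm{Var}\big(N^{-1}\bT(zI-R^N(t))^{-1}\big)=O(N^{-2})$ by a martingale-difference/Burkholder estimate over the i.i.d.\ field: each site $X_{(p,q)}(t)$ enters at most $\#I$ entries, so exposing it is a bounded-rank perturbation that moves the normalized trace by $O(1/N)$, and the fourth moment needed is available under the hypotheses of Theorem \ref{Thm-tightness}. Borel--Cantelli then gives $N^{-1}\bT(zI-R^N(t))^{-1}\to S_t(z)$ a.s.\ for each $z\in\bC\setminus\bR$, hence $L_N(t)\to\mu_t$ a.s. Since $d_t$, and therefore $f_t$, is continuous in $t$ by Corollary \ref{Coro-upper bound} and the Dyson solution depends continuously on its symbol, $t\mapsto\mu_t$ is continuous; combining the fixed-$t$ a.s.\ convergence along a countable dense set of times with the a.s.\ relative compactness forces every subsequential limit of $\{L_N\}$ to equal $\{\mu_t\}$, yielding the a.s.\ convergence in $C([0,T],\mathbf P(\bR))$. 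I expect the genuine obstacle to be the combinatorial step that isolates the leading non-crossing pairings and resums the bounded shifts into $f_t$ (equivalently, the convergence to the operator-valued semicircle) together with the well-posedness of the functional Dyson equation; the compactness and gluing steps are routine adaptations of Section \ref{sec:hdl}.
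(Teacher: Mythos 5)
Your overall skeleton matches the paper's proof: the paper likewise gets the increment bound from Hoffman--Wielandt plus the pathwise H\"older estimate, exploits the finite range of $I$ (there, independence of $F_{ij}=\bigl(\sum_{r\in I}|a_r|\xi_{(i,j)+r}\bigr)^2$ and $F_{kl}$ unless the indices are within $|I|$ of each other) to get a variance bound of order $N^2$ for the sum of the $O(N^2)$ locally dependent terms, reruns the Markov/Borel--Cantelli bookkeeping of Theorem \ref{Thm-tightness}, removes the constant mean matrix by the rank inequality \cite[Exercise 2.4.4]{Tao2012}, and concludes by relative compactness plus fixed-time convergence. The decisive difference is the fixed-time limit itself: the paper does not prove it but invokes Lemma \ref{lemma-correlated entries} (a restatement of \cite[Theorem 3]{Banna2015}), which is stated for exactly this model of a symmetric matrix built from a linear filter of an i.i.d.\ field, and then merely computes $\gamma_t(k,l)$. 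You instead undertake to re-derive that result via the moment method, operator-valued semicircular elements and the Dyson equation --- a legitimate but far heavier route, whose combinatorial core (isolating non-crossing pairings and resumming the bounded shifts into the kernel) you explicitly leave as a sketch. Since that is precisely the one step the paper outsources, the proposal as written does not yet contain a proof of the key identification.

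Two concrete points in your replacement for that lemma are moreover wrong as stated. First, the covariance identification: for upper-triangular positions one gets $\bE[R^N_{ij}(t)R^N_{i'j'}(t)]=N^{-1}c(k,l)$ with $c(k,l)=d_t^2\sum_{r\in I\cap(I+(k,l))}a_ra_{r-(k,l)}$ and $(k,l)=(i'-i,j'-j)$, for \emph{every} sign of $l-k$, whereas the theorem's $\gamma_t$ equals $c$ only on $\{k\le l\}$ and is extended by $\gamma_t(k,l)=\gamma_t(l,k)$. Since $c(k,l)\neq c(l,k)$ in general (take $I=\{(0,0),(1,0)\}$, $a\equiv 1$: then $c(1,0)=d_t^2$ but $c(0,1)=0$), your claims that $\bE[R_{ij}R_{i'j'}]=N^{-1}\gamma_t(i'-i,j'-j)$ ``exactly as in the statement'' and that $f_t(x,y)=d_t^2\bigl|\sum_{r\in I}a_re^{-2\pi i(r_1x+r_2y)}\bigr|^2$ fail for non-swap-symmetric $I$; the symmetrization $R_{ij}=R_{ji}$ genuinely changes which correlations the moment expansion sees, and tracking the relative orientation of paired edges is exactly the bookkeeping you defer. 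Second, the concentration step: exposing the field site-by-site produces $O(N^2)$ martingale differences each of size $O(1/N)$, which gives a variance bound $O(1)$, not the claimed $O(N^{-2})$. To repair it, expose a whole column of sites at a time (a rank-$O(\#I)$ perturbation, hence $O(N)$ differences each $O(1/N)$), and apply Burkholder to get a fourth-moment bound $O(N^{-2})$, which is summable. Both defects are fixable, and alternatively the entire fixed-time step can simply be cited from \cite{Banna2015} as the paper does, but as written the proposal neither establishes the fixed-time limit nor states the limiting kernel consistently with the theorem.
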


\begin{proof}
 By Theorem \ref{Thm-Holder1} and Theorem \ref{Thm-Holder2}, we have the estimation
\begin{align*}
	|X_{(i,j)}(t) - X_{(i,j)}(s)| \le \xi_{(i,j)} |t-s|^{H-\e}, \forall i,j \in \bZ,
\end{align*}

where $\{\xi_{(i,j)}\}_{i,j \in \bZ}$ are i.i.d. copies of $\xi$ with $\bE|\xi|^4<\infty$. Thus,  for $1 \le i \le j \le N$,
\begin{align*}
	\left| R_{ij}^N(t) - R_{ij}^N(s) \right|
	&\le \dfrac{1}{\sqrt{N}} \sum_{r \in I} |a_r| \left| X_{(i,j) + r}(t) - X_{(i,j) + r}(s) \right| \\
	&\le \dfrac{1}{\sqrt{N}} \sum_{r \in I} |a_r| \xi_{(i,j) + r} (t-s)^{H-\e}.
\end{align*}
 Define
\begin{align*}
	F_{ij} = \left( \sum_{r \in I} |a_r| \xi_{(i,j) + r} \right)^2, 1 \le i \le j.
\end{align*}
Then all $F_{ij}$'s for $1 \le i \le j$ are distributed identically with finite second moment.

Analogous to \eqref{eq-4.4}, we have
\begin{align*}
	\left| \langle f, L_N(t) \rangle - \langle f, L_N(s) \rangle \right|^2
	&= \dfrac{1}{N} \|f'\|_{\infty}^2 \sum_{i,j=1}^N \left( R_{ij}^N(t) - R_{ij}^N(s) \right)^2 \nonumber \\
	&\le \dfrac{2}{N} \|f'\|_{\infty}^2 \sum_{i \le j} \left( R_{ij}^N(t) - R_{ij}^N(s) \right)^2 \nonumber \\
	&\le \dfrac{2}{N^2} \|f'\|_{\infty}^2 \sum_{i \le j} F_{ij} (t-s)^{2H-2\e}.
\end{align*}
Noting that the mutual independence among $\{\xi_{ij}\}$ implies the independence between $F_{ij}$ and $F_{kl}$ if $k \notin [i-|I|, i+|I|]$ or $l \notin [j-|I|, j+|I|]$, we have
\begin{align*}
	&\quad \bE \left[ \left( \sum_{i\le j} \left( F_{ij} - \bE \left[ F_{ij} \right] \right) \right)^2 \right] = \sum_{i \le j} \sum_{k \le l} \bE \left[ \left( F_{ij} - \bE \left[ F_{ij} \right] \right) \left( F_{kl} - \bE \left[ F_{kl} \right] \right) \right] \\
	&= \sum_{i \le j} \sum_{k=i-|I|}^{i+|I|} \sum_{l=j-|I|}^{j+|I|} \bE \left[ \left( F_{ij} - \bE \left[ F_{ij} \right] \right) \left( F_{kl} - \bE \left[ F_{kl} \right] \right) \right] \\
	&\le \sum_{i \le j} \sum_{k=i-|I|}^{i+|I|} \sum_{l=j-|I|}^{j+|I|} \left( \bE \left[ \left( F_{ij} - \bE \left[ F_{ij} \right] \right)^2 \right] \bE \left[ \left( F_{kl} - \bE \left[ F_{kl} \right] \right)^2 \right] \right)^{1/2} \\
	&= \dfrac{(2|I|+1)^2N(N+1)}{2} \left( \bE[F_{00}^2] -(E[F_{00}])^2 \right).
\end{align*}
   Thus, following the proof of Theorem \ref{Thm-tightness},  we may get estimations analogous to \eqref{eq-4.4'} and \eqref{eq-4.6} therein,  and then obtain the almost sure relatively compactness of the empirical spectral measure $\{L_N(t), t\in [0,T]\}_{N \in \bN}$. 

Now, let $\widetilde{R}^N(t) = \left( \widetilde{R}_{ij}^N(t) \right)_{1 \le i, j \le N}$ be a symmetric matrix with entries
\begin{align*}
	\widetilde{R}_{ij}^N(t) := R_{ij}^N(t)  -\bE[R_{ij}^N(t)]= \dfrac{1}{\sqrt{N}} \sum_{r \in I} a_r \left( X_{(i,j) + r}(t) - \bE \left[ X_{(i,j) + r}(t) \right] \right), \quad 1 \le i \le j \le N.
\end{align*}
Let $\widetilde{L}_N(t)$ be the empirical spectral measure of $\widetilde{R}^N(t)$. Then by Lemma~\ref{lemma-correlated entries}
 (\cite[Theorem 3]{Banna2015}), for each $t \in [0,T]$, $\widetilde{L}_N(t)$ converges to a deterministic probability measure $\mu_t$ almost surely. Moreover, the Stieltjes transform of the limit measure $\mu_t$ is given by 
\begin{align*}
	S_t(z) = \int_0^1 h_t(x,z) dx,
\end{align*}
where $h_t(x,z)$ is the solution to the equation
\begin{align*}
	h(x,z) = \left( -z + \int_0^1 f(x,y) h(y,z) dy \right)^{-1},
\end{align*}
with
\begin{align*}
	\quad f(x,y) = \sum_{k,l\in \bZ} \gamma_{k,l} e^{-2\pi i(kx+ly)},
\end{align*}
where,  for $k \le l$,
\begin{align*}
	\gamma_{k,l} = \gamma_{l,k} &= \bE \left[ \sum_{r \in I} a_r \left( X_r(t) - \bE \left[ X_r(t) \right] \right) \sum_{r' \in I} a_{r'} \left( X_{(k,l) + r'}(t) - \bE \left[ X_{(k,l) + r'}(t) \right] \right) \right] \\
	&= \bE \left[ \sum_{r \in I} a_r \left( X_r(t) - \bE \left[ X_r(t) \right] \right) \sum_{r' \in I + (k,l)} a_{r'-(k,l)} \left( X_{r'}(t) - \bE \left[ X_{r'}(t) \right] \right) \right] \\
	&= \sum_{r \in I \cap (I + (k,l))} a_r a_{r-(k,l)} \bE \left[ \left( X_r(t) - \bE \left[ X_r(t) \right] \right)^2 \right] \\
	&= d_t^2 \sum_{r \in I \cap (I + (k,l))} a_r a_{r-(k,l)}.
\end{align*}

Finally, by \cite[Exercise 2.4.4]{Tao2012},  the empirical spectral measure $L_N(t)(dx)$ of $R^N(t)$ converges to the same limit $\mu_t$ almost surely. The proof is concluded. 
\end{proof}

\section {High-dimensional limit for Wishart-type matrices} \label{sec:Wishart}

\subsection{Real case}

Recall that $\{X_{ij}(t)\}_{i,j \ge 1}$ are i.i.d. copies of $X_t$ which is the solution to \eqref{SDE}.  Let \[\widehat{U}^N(t) = \left( \widehat{U}_{ij}^N(t) \right)_{1 \le i \le p,\, 1\le j \le N}\] be a $p \times N$ matrix with entries $\widehat{U}_{ij}^N(t) = X_{ij}(t) - \bE \left[ X_{ij}(t) \right]$. Here, $p = p(N)$ is a positive integer that depends on $N$. Let
\begin{align} \label{matrix entries Wishart}
	U^N(t) = \dfrac{1}{N} \widehat{U}^N(t) \widehat{U}^N(t)^\intercal
\end{align}
be a $p \times p$ symmetric matrix with $p$ eigenvalues $\lambda_1^N(t) \le \cdots \le \lambda_p^N(t)$, and
\begin{align*}
	L_N(t)(dx) = \dfrac{1}{p} \sum_{i=1}^p \delta_{\lambda_i^N(t)}(dx)
\end{align*}
be the empirical spectral measure  of $U^N(t)$.

\begin{theorem} \label{Thm-LED Wishart}
Suppose that one of the following conditions holds,
\begin{enumerate}
	\item [(i)] Conditions in Theorem \ref{Thm-Holder1} hold and $\bE[|X_0|^2] < \infty$;
	\item [(ii)] Conditions in Theorem \ref{Thm-Holder2} hold and $\bE [|X_0|^4] < \infty$.
\end{enumerate}
Assume that there exists a positive function $\varphi(x) \in C^1(\bR)$ with bounded derivative, such that $\lim_{|x| \rightarrow \infty} \varphi(x) = + \infty$ and
\begin{align*}
	\sup_{N \in \bN} \langle \varphi, L_N(0) \rangle \le C_0,
\end{align*}
for some positive constant $C_0$ almost surely. Furthermore, assume that there exists a positive constant $c$, such that $p/N \rightarrow c$ as $N \rightarrow \infty$.
	
Then for any $T > 0$, $\bE [|X_t|^2] < \infty$ for $t \in [0,T]$, and the sequence  $\{L_N(t), t\in[0,T]\}_{N \in \bN}$ converges in probability to $\{\mu_t, t\in[0,T]\}$ in $C([0,T], \mathbf P(\bR))$, where  $\mu_t(dx)=\mu_{MP}(c,d_t)(dx)$ with $\mu_{MP}$ given in \eqref{eq:mp-law}.
\end{theorem}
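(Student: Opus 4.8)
The plan is to exploit that the candidate limit $\{\mu_t = \mu_{MP}(c,d_t)\}$ is \emph{deterministic}, so that convergence in probability in $C([0,T],\mathbf P(\bR))$ is equivalent to convergence in law. It therefore suffices to prove that the laws of $\{L_N(t),t\in[0,T]\}_{N\in\bN}$ form a tight family on $C([0,T],\mathbf P(\bR))$ and that every subsequential limit is concentrated on the single continuous path $t\mapsto\mu_{MP}(c,d_t)$. First I would record that $\bE[|X_t|^2]<\infty$ on $[0,T]$ follows from Corollary \ref{Coro-upper bound} and the moment hypothesis on $X_0$; in particular $d_t^2=\bE[|X_t|^2]-m_t^2$ is finite and, again by Corollary \ref{Coro-upper bound}, continuous in $t$, whence $t\mapsto\mu_{MP}(c,d_t)$ is a genuine element of $C([0,T],\mathbf P(\bR))$.

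For tightness I would invoke the criteria of Appendix \ref{subset:tightness argument} (Theorem \ref{Thm-0'}, Propositions \ref{Thm-1'} and \ref{remark:b1}), which reduce the problem to marginal tightness of $\{L_N(t)\}_N$ in $\mathbf P(\bR)$ for each fixed $t$, together with a modulus-of-continuity estimate for $\langle f,L_N(\cdot)\rangle$ along a countable dense family $f\in C_b^1(\bR)$. Marginal tightness comes from the confinement function $\varphi$ exactly as in the proof of Theorem \ref{Thm-tightness}. The core of the argument is then the uniform-in-$N$ bound $\bE\big[|\langle f,L_N(t)\rangle-\langle f,L_N(s)\rangle|^2\big]\le C\|f'\|_\infty^2|t-s|^{2H-2\e}$, whose exponent satisfies $2H-2\e>1$ because $\e<H-\tfrac12$, as required. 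Writing $A=\widehat U^N(t)$, $B=\widehat U^N(s)$, $D=A-B$, the Hoffman--Wielandt inequality (Lemma \ref{lemma-Hoffman ineq}) bounds $\tfrac1p\sum_i|\lambda_i^N(t)-\lambda_i^N(s)|^2$ by $\tfrac1p\|U^N(t)-U^N(s)\|_F^2$, and the identity $AA^\intercal-BB^\intercal=AD^\intercal+DB^\intercal$ together with $\|XY\|_F\le\|X\|_{\mathrm{op}}\|Y\|_F$ gives, pathwise,
\[
	\frac1p\big\|U^N(t)-U^N(s)\big\|_F^2
	\le \frac{1}{pN^2}\big(\|A\|_{\mathrm{op}}+\|B\|_{\mathrm{op}}\big)^2\,\|D\|_F^2 .
\]
Here the H\"older continuity of Theorem \ref{Thm-Holder1}/\ref{Thm-Holder2} bounds the entries of $D$ by $\eta_{ij}|t-s|^{H-\e}$ (with $\eta_{ij}$ absorbing the deterministic mean increment $m_t-m_s$), so that $\|D\|_F^2\le|t-s|^{2H-2\e}\sum_{ij}\eta_{ij}^2=O(pN)\,|t-s|^{2H-2\e}$ in expectation, while $\|A\|_{\mathrm{op}}^2,\|B\|_{\mathrm{op}}^2=O(N)$; the normalization $1/(pN^2)$ then produces the desired $O(|t-s|^{2H-2\e})$.

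The hard part will be the sharp spectral-norm control $\|\widehat U^N(t)\|_{\mathrm{op}}=O(\sqrt N)$: the crude bound $\|A\|_{\mathrm{op}}\le\|A\|_F=O(\sqrt{pN})$ is off by a factor of order $p$ and would destroy tightness, so a genuine $O(\sqrt N)$ estimate is indispensable. Moreover, since $A$ and $D$ are built from the \emph{same} paths at different times they are dependent, which is precisely why I route the estimate through $\|A\|_{\mathrm{op}}\|D\|_F$ rather than through a direct moment expansion of $\bT[(AA^\intercal-BB^\intercal)^2]$. Under hypothesis (ii) we have $\bE[|X_t|^4]<\infty$, and $\bE\|\widehat U^N(t)\|_{\mathrm{op}}^4=O(N^2)$ follows from standard Bai--Yin type spectral-norm estimates, closing the bound via Cauchy--Schwarz against $\bE\|D\|_F^4=O((pN)^2)|t-s|^{4H-4\e}$ (finite because the H\"older constants $\eta_{ij}$ have a finite fourth moment in both hypotheses). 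Under hypothesis (i) only the second moment of the entries is guaranteed, so I would first truncate the entries at a slowly growing level, estimate the spectral norm of the bounded part, and show the remainder is negligible in probability; this truncation is exactly the point where one obtains convergence \emph{in probability} rather than almost surely.

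Finally, for the identification of the limit, I would observe that for each fixed $t$ the matrix $\widehat U^N(t)$ has i.i.d.\ centered entries of variance $d_t^2$ with $p/N\to c$, so by the Marchenko--Pastur theorem (Appendix \ref{sec:pre-matrix}) the empirical spectral measure of $U^N(t)=\tfrac1N\widehat U^N(t)\widehat U^N(t)^\intercal$ converges, under the finite-variance assumption, to $\mu_{MP}(c,d_t)$ given in \eqref{eq:mp-law}. Because these one-time limits are deterministic, joint finite-dimensional convergence is automatic, so every subsequential limit of the tight family of laws must be concentrated on the continuous deterministic path $t\mapsto\mu_{MP}(c,d_t)$. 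Consequently the whole sequence converges in law, equivalently in probability, to $\{\mu_t\}$ in $C([0,T],\mathbf P(\bR))$, which is the assertion.
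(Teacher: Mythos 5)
Your overall architecture (tightness via the Appendix \ref{subset:tightness argument} criteria, fixed-time Marchenko--Pastur limits, and the upgrade from convergence in law to convergence in probability because the limit is deterministic) matches the paper. The genuine gap is in your core modulus-of-continuity estimate. Your factorization $\|AA^\intercal-BB^\intercal\|_F\le(\|A\|_{\mathrm{op}}+\|B\|_{\mathrm{op}})\|D\|_F$ forces you to prove $\bE\|\widehat U^N(t)\|_{\mathrm{op}}^4=O(N^2)$ uniformly in $t$, and this is exactly where the proof breaks. Under hypothesis (i) the entries of $\widehat U^N(t)$ are only guaranteed to have finite \emph{second} moments (only $\bE[|X_0|^2]<\infty$ is assumed), and finite fourth moments of the entries are necessary for any Bai--Yin type $O(\sqrt N)$ spectral-norm bound, so the estimate you call indispensable is simply unavailable. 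Your proposed repair --- truncation plus showing the remainder is ``negligible in probability'' --- is incompatible with the tightness criteria you invoke: Propositions \ref{Thm-1'} and \ref{remark:b1} require the \emph{moment} bound (B), i.e.\ $\bE\big[|\langle f,L_N(t)\rangle-\langle f,L_N(s)\rangle|^{1+\alpha}\big]\le C_{f,T}|t-s|^{1+\beta}$ as in \eqref{eq-kol-bd}, and an in-probability smallness statement yields no such moment control; you would have to verify condition (II) by entirely different means, which you have not done. (Even under hypothesis (ii), the uniform-in-$t$ fourth-moment operator-norm bound is a nontrivial external input, well beyond the random-matrix facts collected in Appendix \ref{sec:pre-matrix}.) Also, your remark that truncation is ``exactly the point where one obtains convergence in probability rather than almost surely'' mislocates the issue: in the paper the weaker mode of convergence stems from the dependence among the entries of the Wishart matrix $U^N(t)$, which blocks the Borel--Cantelli argument of Section \ref{sec:hdl}; it has nothing to do with truncation.

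The paper's proof of the same estimate, \eqref{eq-6.3'}, shows the operator-norm detour is avoidable altogether. After Hoffman--Wielandt one expands $\|U^N(t)-U^N(s)\|_F^2$ entrywise: for $i\ne j$ the summands $\widehat U_{ik}^N(t)\widehat U_{jk}^N(t)-\widehat U_{ik}^N(s)\widehat U_{jk}^N(s)$, $k=1,\dots,N$, are independent across $k$ \emph{and mean zero} (rows $i$ and $j$ are independent and centered), so the variance of their normalized sum is $O(1/N)$ --- precisely the factor your spectral-norm bound was supposed to supply --- while the $p$ diagonal terms, which are not centered, are handled by Cauchy--Schwarz and contribute only $O(1)$ after division by $p$. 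The resulting bound $\big((c+1)E_1+E_2\big)\|f'\|_\infty^2(t-s)^{2H-2\e}$ uses nothing beyond the moments actually guaranteed by hypotheses (i) and (ii), together with the independence of $\xi_{ik}$, $\xi_{jk}$, $\widehat U^N_{ik}(0)$, $\widehat U^N_{jk}(0)$ for $i\neq j$ when computing $E_1$. If you want to salvage your approach, you should either restrict to hypothesis (ii) and supply a genuine quantitative fourth-moment spectral-norm bound, or replace the factorization by the paper's elementary entrywise variance computation.
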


\begin{proof}
 Noting that $\bE[|X_{ij}(t)|^2]$ exists finitely  for all $1 \le i \le p, 1 \le j \le N$, we have that $\widehat U_{ij}^N(t)$ has mean 0 and finite second moment  $d_t^2:=\bE[|\widehat U_{ij}^N(t)|^2]$.
Then by Lemma \ref{lemma-MP law}, for any $t \in [0,T]$, almost surely, the empirical distribution
\begin{align} \label{eq-6.0}
	L_N(t)(dx) \rightarrow \mu_{MP}(c,d_t)(dx)
\end{align}
weakly as $N\to \infty$. Thus, it remains to obtain the tightness of $\{L_N(t), t\in[0,T]\}_{N \in \bN}$ in the space $C([0,T],\mathbf P(\bR))$.

Recalled that $\e \in (0,H-1/2)$, by Theorem \ref{Thm-Holder1} and Theorem \ref{Thm-Holder2},   we have that $|X_{ij}(t) - X_{ij}(s)| \le \xi_{ij} |t-s|^{H-\e}$, where $\{\xi_{ij}\}_{1 \le i \le p, 1\le j \le N}$ are i.i.d. copies of $\xi$ with $\bE[|\xi|^4]<\infty$. Thus,
\begin{align} \label{eq-6.1'}
	 \left| \widehat{U}_{ij}^N(t) - \widehat{U}_{ij}^N(s) \right|
	 &\le \left| X_{ij}(t) - X_{ij}(s) \right| + \left| \bE \left[ X_{ij}(t) \right] - \bE \left[ X_{ij}(s) \right] \right| \nonumber \\
	 &\le \left| X_{ij}(t) - X_{ij}(s) \right| + \bE \left[ \left| X_{ij}(t) - X_{ij}(s) \right| \right] \nonumber \\
	 &\le \left( \xi_{ij} + \bE \left[ \xi_{ij} \right]\right) (t-s)^{H-\e}.
\end{align}
Hence, by \eqref{eq-6.1'}, for $0\le s<t\le T$,
\begin{align} \label{eq-6.2'}
	&\quad \left| \widehat{U}_{ik}^N(t) \widehat{U}_{jk}^N(t) - \widehat{U}_{ik}^N(s) \widehat{U}_{jk}^N(s) \right|^2 \nonumber \\
	&\le 2\left| \widehat{U}_{ik}^N(t) - \widehat{U}_{ik}^N(s) \right|^2 \left| \widehat{U}_{jk}^N(t) \right|^2 + 2\left| \widehat{U}_{ik}^N(s) \right|^2 \left| \widehat{U}_{jk}^N(t) - \widehat{U}_{jk}^N(s) \right|^2 \nonumber \\
	&\le 2 \left( \xi_{ik} + \bE \left[ \xi_{ik} \right]\right)^2 \left( \left| \widehat{U}_{jk}^N(0) \right| + \left( \xi_{jk} + \bE \left[ \xi_{jk} \right]\right) T^{H-\e} \right)^2 (t-s)^{2H-2\e} \nonumber \\
	&\quad + 2 \left( \xi_{jk} + \bE \left[ \xi_{jk} \right]\right)^2 \left( \left| \widehat{U}_{ik}^N(0) \right| + \left( \xi_{ik} + \bE \left[ \xi_{ik} \right]\right) T^{H-\e} \right)^2 (t-s)^{2H-2\e} \nonumber \\
	&= E_{H,T,\e}^{(i,j;k)} (t-s)^{2H-2\e}.
\end{align}
Here, $E_{H,T,\e}^{(i,j;k)}$ is a positive random variable that has finite second moment which is given by
\begin{align*}
	E_{H,T,\e}^{(i,j;k)} &= 2 \left( \xi_{ik} + \bE \left[ \xi_{ik} \right]\right)^2 \left( \left| \widehat{U}_{jk}^N(0) \right| + \left( \xi_{jk} + \bE \left[ \xi_{jk} \right]\right) T^{H-\e} \right)^2 \\
	&\quad + 2 \left( \xi_{jk} + \bE \left[ \xi_{jk} \right]\right)^2 \left( \left| \widehat{U}_{ik}^N(0) \right| + \left( \xi_{ik} + \bE \left[ \xi_{ik} \right]\right) T^{H-\e} \right)^2.
\end{align*}
Let, for $i\neq j$,
\begin{align*}
	E_1 = \bE \left[ E_{H,T,\e}^{(i,j;k)} \right]
	= 4 \bE \left[ \left( \xi + \bE [\xi] \right)^2 \right] \bE \left[ \left( \left| X_0 - \bE [X_0] \right| + \left( \xi + \bE [\xi]\right) T^{H-\e} \right)^2 \right],
\end{align*}
 and for $i=j$,
\begin{align*}
	E_2 = \bE \left[ E_{H,T,\e}^{(i,i;k)} \right]
	= 4 \bE \left[ \left( \xi + \bE [\xi] \right)^2 \left( \left| X_0 - \bE [X_0] \right| + \left( \xi + \bE [\xi]\right) T^{H-\e} \right)^2 \right]. 
\end{align*}
Then $E_1,E_2$ are two positive numbers depending only on $(H,T,\e)$.

Without loss of generality, we assume that
 $\frac{p-1}{N-1} \le c+1$.  Recall that the entries of $\widehat{U}^N(t)$ are independent. Using the Cauchy-Schwarz inequality twice, the mean value theorem, Lemma \ref{lemma-Hoffman ineq}, and \eqref{eq-6.2'}, we can obtain
\begin{align} \label{eq-6.3'}
	&\quad \bE \left[ \left| \langle f, L_N(t) \rangle - \langle f, L_N(s) \rangle \right|^2 \right] = \bE \left[ \left| \dfrac{1}{p} \sum_{i=1}^p f(\lambda_i^N(t)) - f(\lambda_i^N(s)) \right|^2 \right] \nonumber \\
	&\le \dfrac{1}{p} \bE \left[ \sum_{i=1}^p \left| f(\lambda_i^N(t)) - f(\lambda_i^N(s)) \right|^2 \right] \le \dfrac{\|f'\|_{\infty}^2}{p} \bE \left[ \sum_{i=1}^p \left| \lambda_i^N(t) - \lambda_i^N(s) \right|^2 \right] \nonumber \\
	&\le \dfrac{\|f'\|_{\infty}^2}{p} \bE \left[ \sum_{i,j=1}^p \left( U_{ij}^N(t) - U_{ij}^N(s) \right)^2 \right] \nonumber \\
	&= \dfrac{\|f'\|_{\infty}^2}{p} \sum_{i \not= j}^p \bE \left[ \left( \dfrac{1}{N} \sum_{k=1}^N \left[\widehat{U}_{ik}^N(t) \widehat{U}_{jk}^N(t) - \widehat{U}_{ik}^N(s) \widehat{U}_{jk}^N(s) \right]\right)^2 \right] \nonumber \\
	&\qquad\quad  + \dfrac{\|f'\|_{\infty}^2}{p} \sum_{i=1}^p \bE \left[ \left( \dfrac{1}{N} \sum_{k=1}^N\left[ \widehat{U}_{ik}^N(t)^2 - \widehat{U}_{ik}^N(s)^2\right] \right)^2 \right] \nonumber \\
	&\le \dfrac{\|f'\|_{\infty}^2}{pN^2} \sum_{i \neq j}^p \sum_{k=1}^N \bE \left[ \left( \widehat{U}_{ik}^N(t) \widehat{U}_{jk}^N(t) - \widehat{U}_{ik}^N(s) \widehat{U}_{jk}^N(s) \right)^2 \right] \nonumber \\
	&\quad\qquad + \dfrac{\|f'\|_{\infty}^2}{pN} \sum_{i=1}^p \sum_{k=1}^N \bE \left[ \left( \widehat{U}_{ik}^N(t)^2 - \widehat{U}_{ik}^N(s)^2 \right)^2 \right] \nonumber \\
	&\le \dfrac{\|f'\|_{\infty}^2}{pN^2} \sum_{i \neq j}^p \sum_{k=1}^N \bE \left[ E_{H,T}^{(i,j;k)} \right] (t-s)^{2H-2\e} +  \dfrac{\|f'\|_{\infty}^2}{pN} \sum_{i=1}^p \sum_{k=1}^N \bE \left[ E_{H,T}^{(i,i;k)} \right] (t-s)^{2H-2\e} \nonumber \\
	&= \dfrac{\|f'\|_{\infty}^2 (p-1)}{N} E_1 (t-s)^{2H-2\e} +  \|f'\|_{\infty}^2E_2 (t-s)^{2H-2\e} \nonumber \\
	&\le \left( (c+1) E_1 + E_2 \right) \|f'\|_{\infty}^2 (t-s)^{2H-2\e}
\end{align}
for any $f \in C^1(\bR)$ with bounded derivative. Hence, by  Proposition \ref{remark:b1} and \eqref{eq-6.0}, we can conclude that the sequence  $\{L_N(t), t\in[0,T]\}_{N \in \bN}$ converges in law to $\{\mu_t=\mu_{MP}(c,d_t), t\in[0,T]\}$. Finally,  noting that the limit measure $\{\mu_t,t\in[0,T]\}$ is  deterministic,  the convergence in law actually coincides with the convergence in probability. 

The proof is concluded. 
\end{proof}

\begin{remark} \label{remark-4}
In contrast, the convergences of the empirical  measure-valued processes obtained in Theorem \ref{Thm-tightness} and other subsequent results in Section \ref{sec:hdl} are  {\em almost-sure} convergence, which is stronger than the {\em in-probability} convergence obtained in Theorem \ref{Thm-LED Wishart}.

In section \ref{sec:hdl}, we construct a compact set in $C([0,T],
\mathbf P(\bR))$ and show that the sequence $\{L_N(t), t \in [0,T]\}$
is in that compact set {\em almost surely}. However, in the Wishart
case,   we are not able to get an estimation analogous to
\eqref{eq-4.8} which is the key ingredient to get the almost-sure convergence,  due to the lack of the independence for the upper triangular entries. Instead, we  obtain the tightness  on $C([0,T], \mathbf P(\bR))$ for $\{L_N(t), t \in [0,T]\}$ thanks to Proposition \ref{Thm-1'},  and then the convergence {\em in law} follows consequently.  


\end{remark}

\begin{remark} \label{remark-7}
Let $\sigma(x) = 1$, $b(x) = 0$ and $X_0 = 0$, then the solution to \eqref{SDE} is the fractional Brownian motion $X_t = B_t^H$. Then we have the convergence in law of the empirical spectral measures towards the scaled  Marchenko-Pastur law $\mu_{MP}(c,t^H)(dx)$, which  recovers the results obtained in \cite{Pardo2017}.
\end{remark}

\begin{remark} \label{remark-2}
Let $\widetilde{Y}^N(t) = \left( X_{ij}(t) \right)_{1 \le i \le p,1\le j \le N}$. Then under the conditions in Theorem \ref{Thm-LED Wishart}, the sequence of empirical  measures of the eigenvalues of $\frac{1}{N} \widetilde{Y}^N(t) \widetilde{Y}^N(t)^\intercal$ converges in probability to $\mu_{MP}(c,d_t)(dx)$ in $C([0,T],\mathbf P(\bR))$.  Indeed, by the Lidskii inequality in \cite[Exercise 1.3.22 (ii)]{Tao2012}, we have
\begin{align*}
	\left| F_{\frac{1}{N} \widetilde{Y}^N(t) \widetilde{Y}^N(t)^\intercal}(x) - F_{\frac{1}{N} \widehat{U}^N(t) \widehat{U}^N(t)^\intercal}(x) \right|
	\le \dfrac1{N} \br\left(\frac{1}{N} \widetilde{Y}^N(t) \widetilde{Y}^N(t)^\intercal - \frac{1}{N} \widehat{U}^N(t) \widehat{U}^N(t)^\intercal\right),
\end{align*}
where $F_A(x)$ is the number of the eigenvalues of $A$ that are smaller than $x$. Noting that the rank of 
\begin{align*}
	&\quad \dfrac{1}{N} \widetilde{Y}^N(t) \widetilde{Y}^N(t)^\intercal - \dfrac{1}{N} \widehat{U}^N(t) \widehat{U}^N(t)^\intercal \\
	&= \frac{1}{N} \left( \widehat{U}^N(t) + m_t E_N \right) \left( \widehat{U}^N(t)^\intercal + m_t E_N \right) - \frac{1}{N} \widehat{U}^N(t) \widehat{U}^N(t)^\intercal \\
	&= \dfrac{m_t}{N} E_N \widehat{U}^N(t)^\intercal + \dfrac{m_t}{N} \widehat{U}^N(t) E_N + m_t^2 E_N,
\end{align*}
is at most $3$ for all $t \in [0,T]$,  the convergence in probability of $\{L_N(t)\}_{N \in \bN}$ towards $\mu_{MP}(c,d_t)(dx)$ implies that the empirical spectral measures of $\frac{1}{N} \widetilde{Y}^N(t) \widetilde{Y}^N(t)^\intercal$ converges to the same limit in probability.
\end{remark}

\begin{remark} \label{remark-2'}
The Stieltjes transform $G_t(z)$ of the limiting measure $\mu_t$ is
\begin{align*}
	G_t(z) = \int \dfrac{\mu_t(dx)}{z-x}
	= \int \dfrac{p_{MP}(c,d_t)(x)}{z-x} dx
	= \int \dfrac{p_{MP}(c,1)(x)}{z - d_t^2 x} dx,
\end{align*}
where $p_{MP}(c,d_t)(x)$ is the probability density of the
Marchenko-Pastur distribution
$\mu_{MP}(c,d_t)$ given in~\eqref{eq:mp-law}. 
Assuming that the variance $d_t^2$ of the solution $X_t$ is continuously differentiable on $(0,T)$, we have
\begin{align} \label{eq-derivative-MP}
\partial_t G_t(z)
	&= \partial_t \int \dfrac{p_{MP}(c,1)(x)}{z - d_t^2 x} dx
	= (d_t^2)' \int \dfrac{x p_{MP}(c,1)(x)}{(z - d_t^2 x)^2} dx \nonumber \\
	&= \dfrac{(d_t^2)'}{d_t^2} \int \left( \dfrac{z}{(z - d_t^2 x)^2} - \dfrac{1}{z - d_t^2 x} \right) p_{MP}(c,1)(x) dx \nonumber \\
	&= - \dfrac{(d_t^2)'}{d_t^2} \left( z \partial_z G_t(z) + G_t(z) \right).
\end{align}

On the other hand, [Bai and Silverstein, Lemma 3.11] and some computation yield
\begin{align*}
	cz d_t^2 G_t(z)^2 = G_t(z) \left( z - d_t^2 (1-c) \right) -1.
\end{align*}
Taking partial derivative with respect to $z$,  we have
\begin{align} \label{eq-Stransform-MP}
	c d_t^2 G_t(z)^2 + 2 cz d_t^2 G_t(z) \partial_z G_t(z) = G_t(z) + \partial_z G_t(z) \left( z - d_t^2 (1-c) \right).
\end{align}
Therefore, by \eqref{eq-derivative-MP} and \eqref{eq-Stransform-MP}, we have
\begin{align}\label{eq-5.8'}
\partial_t G_t(z)
	= - (d_t^2)' \left( c G_t(z)^2 + 2cz G_t(z) \partial_z G_t(z) + (1-c) \partial_z G_t(z) \right).
\end{align}
\end{remark}

\subsection{Complex case} \label{subsec:Wishart complex}

Recall that $Z=(Z^{(1)}, Z^{(2)})$ is the solution to   \eqref{SDE-complex}.  Let $\widehat{W}^N(t) = \left( \widehat{W}_{ij}^N(t) \right)_{1 \le i \le p,1\le j \le N}$ be a $p \times N$ matrix with entries $\widehat{W}_{ij}^N(t) = Z_{ij}(t) - \bE \left[ Z_{ij}(t) \right]$, where $Z_{ij}$ are i.i.d. copies of $Z^{(1)}+\iota Z^{(2)}$ and $p = p(N)$ is a positive integer depending on $N$. Let
\begin{align} \label{matrix entries Wishart-complex}
	W^N(t) = \dfrac{1}{N} \widehat{W}^N(t) \widehat{W}^N(t)^*
\end{align}
be a $p \times p$ symmetric matrix with eigenvalue empirical measure $L_N(t)(dx)$.

\begin{theorem} \label{Thm-LED Wishart complex}
Suppose that the coefficient functions $\tilde{\sigma}$, $\tilde{b}$ have bounded derivatives which are H\"{o}lder continuous of order greater than $1/(H-\e)-1$. Besides, assume that one of the following conditions holds,
\begin{enumerate}
	\item [(a)] $\|(\tilde{\sigma}_x, \tilde{\sigma}_y)\|_{L^{\infty}(\bR^2)} + \|(\tilde{b}_x, \tilde b_y)\|_{L^{\infty}(\bR^2)} > 0$, $\bE[\|Z_0\|^4] < \infty$.
	\item [(b)] $\tilde{\sigma}$ and $\tilde{b}$ are bounded and $\bE[\|Z_0\|^2] < \infty$.
\end{enumerate}
Moreover, suppose that there exists a positive function $\varphi(x) \in C^1(\bR)$ with bounded derivative, such that $\lim_{|x| \rightarrow \infty} \varphi(x) = + \infty$ and
\begin{align*}
	\sup_{N \in \bN} \langle \varphi, L_N(0) \rangle \le C_0,
\end{align*}
for some positive constant $C_0$ almost surely. Furthermore, suppose that there exists a positive constant $c$, such that $p/N \rightarrow c$ as $N \rightarrow \infty$.
	
Then for any $T > 0$, $\bE [\|Z_t\|^2] < \infty$, and the sequence  $\{L_N(t),[0,T]\}_{N \in \bN}$ converges  in probability to $\mu_{MP}(c,d_Z(t))(dx)$ in $C([0,T],\mathbf P(\bR))$.
\end{theorem}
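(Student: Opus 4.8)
The plan is to follow the proof of the real case (Theorem \ref{Thm-LED Wishart}) almost verbatim, replacing the real entries $\widehat{U}_{ij}^N$ by the complex entries $\widehat{W}_{ij}^N$, the transpose by the conjugate transpose, and the real Hölder estimate by its complex counterpart from Theorem \ref{Thm-LED complex}. Two ingredients are needed: (i) for each fixed $t$, almost-sure weak convergence of $L_N(t)$ to $\mu_{MP}(c,d_Z(t))$; and (ii) tightness of $\{L_N(t),\,t\in[0,T]\}_{N\in\bN}$ in $C([0,T],\mathbf P(\bR))$. Combining the two through the tightness criterion (Proposition \ref{remark:b1}) yields convergence in law, and since the limit is deterministic this is in fact convergence in probability.

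For (i), observe that $\widehat{W}_{ij}^N(t)=Z_{ij}(t)-\bE[Z_{ij}(t)]$ are i.i.d.\ centered complex random variables whose common variance is $d_Z^2(t)=\bE[\|Z_t-m_Z(t)\|^2]$, which is finite under condition (a) or (b) and continuous in $t$ (as in the proof of Theorem \ref{Thm-LED complex}). The empirical spectral measure of $W^N(t)=\frac1N\widehat W^N(t)\widehat W^N(t)^*$ then converges almost surely, for each fixed $t$, to the Marchenko--Pastur law $\mu_{MP}(c,d_Z(t))$ by the complex-entry analogue of Lemma \ref{lemma-MP law}. Here one only needs the universality statement that the limiting spectral distribution of a sample covariance matrix with i.i.d.\ entries depends on the entry distribution solely through its variance $d_Z^2(t)$, so that the generally nonzero pseudo-variance $\bE[(Z_{ij}-\bE Z_{ij})^2]$ plays no role.

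For (ii), I first reproduce the complex Hölder estimate of Theorem \ref{Thm-LED complex}: following the proofs of Theorems \ref{Thm-Holder1} and \ref{Thm-Holder2} together with Lemma \ref{lemma-Holder continuity of fbm}, under condition (a) or (b) there is a random variable $\zeta$ with a finite moment of the required order such that $|Z_t^{(1)}-Z_s^{(1)}|^2+|Z_t^{(2)}-Z_s^{(2)}|^2\le \zeta\,(t-s)^{2H-2\e}$ almost surely, which yields, after centering and exactly as in \eqref{eq-6.1'}, a bound on $|\widehat W_{ij}^N(t)-\widehat W_{ij}^N(s)|$ of order $(t-s)^{H-\e}$. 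I then control the modulus of the entry increments of $W^N$ as in \eqref{eq-6.2'}, using $|ab-a'b'|^2\le 2|a-a'|^2|b|^2+2|a'|^2|b-b'|^2$ for the complex products $\widehat W_{ik}^N\,\overline{\widehat W_{jk}^N}$, which produces a positive random variable with finite expectation depending only on $(H,T,\e)$. Applying the Hoffman--Wielandt inequality (Lemma \ref{lemma-Hoffman ineq}) to the Hermitian matrices $W^N(t),W^N(s)$ and using the independence of the entries of $\widehat W^N$ together with $p/N\to c$, I obtain, exactly as in \eqref{eq-6.3'},
\begin{align*}
\bE\left[\left|\langle f,L_N(t)\rangle-\langle f,L_N(s)\rangle\right|^2\right]\le C\,\|f'\|_\infty^2\,(t-s)^{2H-2\e}
\end{align*}
for every $f\in C^1(\bR)$ with bounded derivative, where $C$ depends only on $(H,T,\e,c)$ and the limiting second moments. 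Proposition \ref{remark:b1} then gives the tightness.

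The computations are routine adaptations, so there is no deep obstacle; the only genuinely new point is the invocation of the complex Marchenko--Pastur limit in step (i), where one must confirm that the limiting law depends solely on the variance $d_Z^2(t)$ and is insensitive to the joint distribution of the real and imaginary parts of $Z_{ij}$. As in the real case (see Remark \ref{remark-4}), the lack of independence among the entries of $W^N(t)$---these being quadratic in the independent entries of $\widehat W^N(t)$---prevents the Borel--Cantelli argument used for the Wigner-type matrices in Section \ref{sec:hdl}, so only convergence in probability, rather than almost-sure convergence, is obtained.
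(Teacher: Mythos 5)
Your proposal is correct and follows essentially the same route as the paper's own proof: fixed-$t$ almost-sure convergence via the complex Marchenko--Pastur law (Lemma \ref{lemma-MP law complex}), the complex H\"older estimate \eqref{eq-7.3} from Theorem \ref{Thm-LED complex}, the Hoffman--Wielandt moment bound as in \eqref{eq-6.2'}--\eqref{eq-6.3'}, and tightness via Proposition \ref{remark:b1}, with convergence in probability following from the deterministic limit. Your added remark that the limit depends only on the variance $d_Z^2(t)$ and not on the pseudo-variance of $Z_{ij}$ is a sensible clarification, consistent with the hypotheses of Lemma \ref{lemma-MP law complex} as stated.
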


\begin{proof}
The proof is similar to the  proofs of Theorem \ref{Thm-LED Wishart} and Theorem \ref{Thm-LED complex}, which is sketched below.
	
From the proof of Theorem \ref{Thm-LED complex}, we can obtain the finiteness of the mean $m_Z(t)$ and $d_Z^2(t)$. Analogous to \eqref{eq-6.0},  by using Lemma \ref{lemma-MP law complex}, we have the almost-sure convergence
\begin{align} \label{eq-7.5}
	L_N(t)(dx) \rightarrow \mu_{MP}(c,d_Z(t))(dx).
\end{align}
	
Note that the estimation \eqref{eq-7.3} in the proof Theorem \ref{Thm-LED complex} is still valid. Similar to the estimation \eqref{eq-6.2'} and \eqref{eq-6.3'} in the proof of Theorem \ref{Thm-LED Wishart}, we can obtain
\begin{align*}
	\bE \left[ \left| \langle f, L_N(t) \rangle - \langle f, L_N(s) \rangle \right|^2 \right] \le C \|f'\|_{\infty}^2 (t-s)^{2H-2\e}.
\end{align*}
 Then following the argument at the end of the proof of Theorem \ref{Thm-LED Wishart}, we can obtain the tightness of the sequence $\{L_N(t)\}_{N \in \bN}$, which implies the convergence in distribution and hence the convergence in probability, with the  deterministic limit  given in \eqref{eq-7.5}.
\end{proof}

\begin{remark} \label{remark-3}
Let $\widetilde{W}^N(t) = \left( Z_{ij}(t) \right)_{1 \le i \le p,1\le j \le N}$. Then under the conditions in Theorem \ref{Thm-LED Wishart complex}, the sequence of empirical spectral measures of $\frac{1}{N} \widetilde{W}^N(t) \widetilde{W}^N(t)^\intercal$ converges  in probability to $\mu_{MP}(c,d_Z)(dx)$ in $C([0,T],\mathbf P(\bR))$. 
\end{remark}

\begin{remark} \label{remark-4'}
	Similar to Remark \ref{remark-2'}, the Stieltjes transform of the limit measure $\mu_t$ satisfies the differential equation \eqref{eq-5.8'} with $d_t$ replaced by $d_Z(t)$.
\end{remark}

\appendix

\section{Preliminaries on (random) matrices}\label{sec:pre-matrix}

The  following is the Hoffman-Wielandt lemma, which can be found in \cite[Lemma 2.1.19]{Anderson2010}, see also \cite{Tao2012}.
\begin{lemma}[Hoffman-Wielandt] \label{lemma-Hoffman ineq}
Let $A = (A_{ij})_{1 \le i, j \le N}$ and $B = (B_{ij})_{1 \le i, j \le N}$ be $N \times N$ Hermitian matrices, with ordered eigenvalues $\lambda_1^A \le \lambda_2^A \le \ldots \le \lambda_N^A$ and $\lambda_1^B \le \lambda_2^B \le \ldots \le \lambda_N^B$. Then
\begin{align*}
	\sum_{i=1}^N \left| \lambda_i^A - \lambda_i^B \right|^2
	\le \bT\left[(A-B)(A-B)^*\right]
	= \sum_{i,j=1}^N \left| A_{ij} - B_{ij} \right|^2.
\end{align*}
\end{lemma}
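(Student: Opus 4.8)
The plan is to reduce the inequality to a minimization of a linear functional over the set of doubly stochastic matrices, and then to invoke Birkhoff's theorem together with an elementary rearrangement argument. First I would spectrally decompose the two Hermitian matrices as $A = U\Lambda_A U^*$ and $B = V\Lambda_B V^*$, where $U,V$ are unitary and $\Lambda_A = \mathrm{diag}(\lambda_1^A,\dots,\lambda_N^A)$, $\Lambda_B = \mathrm{diag}(\lambda_1^B,\dots,\lambda_N^B)$ collect the ordered eigenvalues. Since $A-B$ is Hermitian, the right-hand side is exactly the squared Frobenius norm $\|A-B\|_F^2$, and by unitary invariance of $\|\cdot\|_F$ I would write, with $W := U^*V$ (which is unitary),
\[
\bT[(A-B)(A-B)^*] = \|A-B\|_F^2 = \|U^*(A-B)V\|_F^2 = \|\Lambda_A W - W\Lambda_B\|_F^2 .
\]

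The $(i,j)$ entry of $\Lambda_A W - W\Lambda_B$ equals $W_{ij}(\lambda_i^A - \lambda_j^B)$, so expanding the Frobenius norm gives
\[
\|A-B\|_F^2 = \sum_{i,j=1}^N |W_{ij}|^2\,(\lambda_i^A - \lambda_j^B)^2 = \sum_{i,j=1}^N P_{ij}\,(\lambda_i^A - \lambda_j^B)^2 ,
\]
where $P_{ij} := |W_{ij}|^2$. Because $W$ is unitary, every row and every column of $P = (P_{ij})$ sums to $1$, so $P$ is a doubly stochastic matrix. Thus the quantity of interest is bounded below by the minimum of the linear functional $P \mapsto \sum_{i,j} P_{ij}(\lambda_i^A - \lambda_j^B)^2$ over the convex set of all $N\times N$ doubly stochastic matrices.

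I would then apply Birkhoff's theorem: the extreme points of the set of doubly stochastic matrices are exactly the permutation matrices, and a linear functional on a compact convex polytope attains its minimum at an extreme point. Hence
\[
\|A-B\|_F^2 \ge \min_{\pi \in S_N} \sum_{i=1}^N (\lambda_i^A - \lambda_{\pi(i)}^B)^2 .
\]
Finally, expanding the square and noting that $\sum_i (\lambda_i^A)^2$ and $\sum_i (\lambda_{\pi(i)}^B)^2$ are independent of $\pi$, minimizing the sum is equivalent to maximizing $\sum_i \lambda_i^A \lambda_{\pi(i)}^B$; by the rearrangement inequality this maximum is attained at $\pi = \mathrm{id}$, since both eigenvalue sequences are sorted in increasing order. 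This yields $\sum_{i=1}^N (\lambda_i^A - \lambda_i^B)^2 \le \|A-B\|_F^2$, as claimed.

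The step I expect to carry the main conceptual content is the passage from the unitary $W$ to the doubly stochastic $P$ followed by the use of Birkhoff's theorem, which is what converts an a priori continuous optimization over unitaries into a finite combinatorial one; the concluding rearrangement step is then routine. One point to handle with care is that the eigenvalues of Hermitian matrices are real, so $|\lambda_i^A - \lambda_j^B|^2 = (\lambda_i^A - \lambda_j^B)^2$ and all the algebra above stays within the reals even though the entries $A_{ij}$, $B_{ij}$, and $W_{ij}$ may be complex.
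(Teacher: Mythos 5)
Your proof is correct: the chain spectral decomposition $\to$ unitary invariance of the Frobenius norm $\to$ doubly stochastic matrix $P_{ij}=|W_{ij}|^2$ $\to$ Birkhoff's theorem $\to$ rearrangement inequality is complete, and each step is justified. The paper itself gives no proof of this lemma, citing \cite[Lemma 2.1.19]{Anderson2010} instead, and your argument is essentially the same classical one presented there, so there is nothing to reconcile.
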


The next two lemmas are the famous Wigner semi-circle law for the real case and complex case respectively (see, e.g.,  \cite{Tao2012}).
\begin{lemma}\label{lemma-Wigner semicircle}
Let $M_N$ be the top left $N \times N$ minors of an infinite Wigner matrix $(\xi_{ij})_{i,j\ge1}$, which is symmetric, the upper-triangular entries $\xi_{ij}, i>j$ are i.i.d. real random variables with mean zero and unit variance, and the diagonal entries $\xi_{ii}$ are i.i.d. real variables, independent of the upper-triangular entries, with bounded mean and variance. Then the empirical spectral distributions $\mu_{M_N/\sqrt{N}}$ converge almost surely to the Wigner semicircular distribution
\begin{align*}
	\mu_{sc}(dx) = \dfrac{\sqrt{4-x^2}}{2\pi} 1_{[-2,2]}(x) dx.
\end{align*}
\end{lemma}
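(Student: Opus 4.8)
The plan is to establish the result by the classical method of moments combined with the Borel--Cantelli lemma, exploiting the fact that all the matrices $M_N$ live on a common probability space as minors of the single infinite array $(\xi_{ij})$. Write $W_N = M_N/\sqrt N$ with empirical spectral distribution $L_N = \frac1N\sum_{i=1}^N\delta_{\lambda_i(W_N)}$. Since the semicircle law $\mu_{sc}$ is compactly supported, it is determined by its moments, so it suffices to show that for each fixed $k\in\bN$ the moment $\langle x^k, L_N\rangle = \frac1N \bT(W_N^k)$ converges almost surely to $m_k := \int x^k\,\mu_{sc}(dx)$, where $m_{2\ell}$ is the Catalan number $\frac{1}{\ell+1}\binom{2\ell}{\ell}$ and $m_{2\ell+1}=0$.

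First I would reduce to the case of uniformly bounded entries by a truncation argument. Replacing each $\xi_{ij}$ by its truncation $\widehat\xi_{ij} = \xi_{ij}\mathbf 1_{\{|\xi_{ij}|\le C\}}$ (suitably recentered and rescaled so that the off-diagonal entries keep mean zero and unit variance) produces a matrix $\widehat W_N$ whose entries have moments of all orders, and the Hoffman--Wielandt inequality (Lemma~\ref{lemma-Hoffman ineq}) controls $\frac1N\sum_i|\lambda_i(W_N)-\lambda_i(\widehat W_N)|^2$ by $\frac1{N^2}\sum_{i,j}|\xi_{ij}-\widehat\xi_{ij}|^2$, which by the law of large numbers can be made arbitrarily small, uniformly in $N$, by sending $C\to\infty$. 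This bounds the distance between the two spectral distributions and lets me assume henceforth that the entries are bounded.

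The heart of the argument is then the moment expansion
\[
\bE\Big[\tfrac1N\bT(W_N^k)\Big] = \frac{1}{N^{1+k/2}}\sum_{i_1,\dots,i_k=1}^N \bE\big[\xi_{i_1i_2}\xi_{i_2i_3}\cdots\xi_{i_ki_1}\big].
\]
By independence and the mean-zero assumption off the diagonal, a closed walk $i_1\to i_2\to\cdots\to i_k\to i_1$ contributes only if every edge it uses is traversed at least twice; counting the free indices shows that the dominant contributions, of order $N^{1+k/2}$, come exactly from walks whose edge multigraph is a tree with $k/2$ distinct edges each crossed twice. These are in bijection with non-crossing pair partitions of $\{1,\dots,k\}$, so their number is the Catalan number when $k$ is even and zero when $k$ is odd; self-loops (diagonal entries) and higher-multiplicity edges contribute only lower-order terms, which is precisely why the nonzero mean and the mere finiteness of the variance of the diagonal entries are harmless. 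Hence $\bE[\frac1N\bT(W_N^k)]\to m_k$.

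Finally, to upgrade to almost sure convergence I would estimate the variance through the analogous expansion of $\bE[(\frac1N\bT(W_N^k))^2]-(\bE[\frac1N\bT(W_N^k)])^2$: the leading disconnected terms cancel and the surviving connected contributions are of order $N^{-2}$, giving $\mathrm{Var}(\frac1N\bT(W_N^k)) = O(N^{-2})$. Summability of this bound over $N$, together with Chebyshev's inequality and the Borel--Cantelli lemma, yields $\frac1N\bT(W_N^k)\to m_k$ almost surely for each $k$; intersecting the countably many null sets gives almost sure convergence of all moments, and hence the almost sure weak convergence $L_N\Rightarrow\mu_{sc}$. I expect the main obstacle to be the bookkeeping in the combinatorial counting of walks --- in particular making rigorous the graph-theoretic claim that only double-tree walks survive at leading order, and verifying the $O(N^{-2})$ variance bound --- rather than any conceptual difficulty, since the truncation has reduced everything to bounded entries for which all moments exist.
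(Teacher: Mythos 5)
Your proposal is correct in outline, but it is worth pointing out that the paper does not prove this lemma at all: it appears in Appendix~\ref{sec:pre-matrix} as a quoted preliminary (``the famous Wigner semi-circle law''), with the proof delegated to the cited literature (\cite{Tao2012}, and implicitly \cite{Anderson2010}). What you have written is essentially the standard proof from those references --- the method of moments for the a.s.\ semicircle law: (i) truncation of the entries, with the Hoffman--Wielandt inequality (Lemma~\ref{lemma-Hoffman ineq}) plus the strong law controlling the spectral perturbation uniformly in $N$; (ii) the trace-moment expansion $\frac1N\bT(W_N^k)=N^{-1-k/2}\sum\bE[\xi_{i_1i_2}\cdots\xi_{i_ki_1}]$, where mean-zero off-diagonal entries force every edge of a surviving closed walk to be traversed at least twice, the leading order coming from double-edge trees counted by Catalan numbers, with self-loops (which is where the nonzero-mean, finite-variance diagonal enters) relegated to lower order; (iii) a variance bound $\mathrm{Var}\bigl(\frac1N\bT(W_N^k)\bigr)=O(N^{-2})$ and Chebyshev plus Borel--Cantelli to get a.s.\ convergence of each moment, whence a.s.\ weak convergence since $\mu_{sc}$ is compactly supported and moment-determined. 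The two standard details you should make explicit if this were written out in full are exactly the ones you flag: after truncation the off-diagonal entries must be recentered and rescaled (the error this introduces is again absorbed by Hoffman--Wielandt, since the truncated second moment tends to $1$ as the truncation level grows), and the combinatorial bookkeeping showing that only double-tree walks contribute at order $N^{1+k/2}$ while the connected terms in the variance expansion are down by $N^{-2}$. Neither is a gap --- both are carried out in the cited texts --- so your argument is a legitimate self-contained route to a statement the paper simply imports.
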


\begin{lemma} \label{lemma-Wigner semicircle-complex}
Let $M_N$ be the top left $N \times N$ minors of an infinite complex Wigner matrix $(\xi_{ij})_{i,j\ge1}$, which is Hermitian, the upper-triangular entries $\xi_{ij}, i>j$ are i.i.d. complex random variables with mean zero and unit variance, and the diagonal entries $\xi_{ii}$ are i.i.d. real variables, independent of the upper-triangular entries, with bounded mean and variance. Then the conclusion of Lemma \ref{lemma-Wigner semicircle} holds.
\end{lemma}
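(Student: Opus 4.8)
The plan is to prove convergence of the empirical spectral distribution by the method of moments, running the classical argument for the real Wigner law and recording only the modifications forced by the Hermitian (rather than symmetric) structure. Write $W_N = M_N/\sqrt{N}$. Since $\mu_{sc}$ is compactly supported and hence determined by its moments, it suffices to show that for every fixed integer $k\ge1$ the empirical moment $\frac1N\mathrm{Tr}(W_N^k)$ converges almost surely to the $k$-th moment of $\mu_{sc}$ — that is, to $0$ for odd $k$ and to the Catalan number $C_{k/2}$ for even $k$ — and then invoke the method of moments to deduce almost-sure weak convergence of $\mu_{W_N}$ to $\mu_{sc}$.

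First I would treat the expected moments. Expanding
\[
\mathbb E\big[\tfrac1N\mathrm{Tr}(W_N^k)\big]=\frac{1}{N^{1+k/2}}\sum_{i_1,\dots,i_k}\mathbb E\big[\xi_{i_1i_2}\xi_{i_2i_3}\cdots\xi_{i_ki_1}\big],
\]
one associates to each index tuple a closed walk of length $k$ on $\{1,\dots,N\}$. Because the entries are centered and independent subject only to the Hermitian constraint $\xi_{ji}=\overline{\xi_{ij}}$, a walk contributes to the expectation only if each edge it uses is traversed at least twice; the standard tree-counting then shows that the surviving walks at leading order are exactly those visiting $k/2+1$ distinct vertices along a tree and traversing each edge exactly twice, which are enumerated by $C_{k/2}$, with no contribution for odd $k$. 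The point specific to the complex case is that in such a tree-type walk each edge $\{i,j\}$ is traversed once as $\xi_{ij}$ and once as $\xi_{ji}=\overline{\xi_{ij}}$, so the associated factor is $\mathbb E[|\xi_{ij}|^2]=1$ by the unit-variance hypothesis; this is precisely why the normalization is imposed on $|\xi_{ij}|$ rather than on $\xi_{ij}^2$. Walks that traverse an edge twice in the same orientation produce factors $\mathbb E[\xi_{ij}^2]$, which need not equal $1$, but such walks are subleading in $N$ and drop out; the real diagonal entries enter only $O(N^{k/2})$ tuples and are likewise negligible.

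Next I would upgrade convergence in expectation to almost-sure convergence via a second-moment estimate. A pair-of-walks expansion shows $\mathrm{Var}\big(\frac1N\mathrm{Tr}(W_N^k)\big)=O(N^{-2})$, since two closed walks contributing jointly must share an edge, which reduces the number of free vertices. As $\sum_N N^{-2}<\infty$, the Borel--Cantelli lemma gives $\frac1N\mathrm{Tr}(W_N^k)\to C_{k/2}$ (resp. $0$) almost surely, and intersecting the countably many probability-one events over $k$ yields a single event of full probability on which all moments converge; the method of moments then delivers the claimed almost-sure weak convergence.

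The main obstacle is that the hypotheses guarantee only finite second moments, whereas the moment and variance expansions above implicitly require all moments to be finite. I would resolve this by a truncation--and--centering argument: replace $\xi_{ij}$ by $\xi_{ij}1_{\{|\xi_{ij}|\le B\}}$, recenter and renormalize to restore mean zero and unit variance, and use the Hoffman--Wielandt bound (Lemma \ref{lemma-Hoffman ineq}) together with rank and perturbation estimates to show the resulting error in the empirical distribution is controlled uniformly in $N$ and tends to $0$ as $B\to\infty$. This reduces the problem to bounded entries, for which all moments are finite and the moment method applies verbatim; the bookkeeping of the complex conjugation is identical to the real case apart from the systematic appearance of $|\xi_{ij}|^2$, so the conclusion of Lemma \ref{lemma-Wigner semicircle} carries over.
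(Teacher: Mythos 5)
The paper does not actually prove this lemma: it is stated in Appendix~\ref{sec:pre-matrix} as the classical complex Wigner semicircle law, with a citation to Tao's book, and is then invoked as a black box in the proof of Theorem~\ref{Thm-LED complex}. Your proposal, by contrast, supplies a self-contained proof, and it is the standard one — the trace--moment (closed-walk) method with truncation — so it takes a genuinely different route from the paper only in the sense that the paper takes no route at all. Your outline is correct, and the two points specific to the complex Hermitian case are exactly the right ones: in a leading-order tree-type closed walk each edge is traversed once in each orientation, so the surviving factor per edge is $\mathbb{E}[\xi_{ij}\overline{\xi_{ij}}]=\mathbb{E}|\xi_{ij}|^2=1$, while same-orientation traversals, which would produce the unnormalized quantity $\mathbb{E}[\xi_{ij}^2]$, occur only in walks with fewer distinct vertices and are washed out by the $N^{-(1+k/2)}$ normalization; likewise the non-centered real diagonal entries only enter subleading walk counts. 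Two steps deserve slightly more care than your wording gives them: (i) the truncation comparison needs no rank inequalities — Lemma~\ref{lemma-Hoffman ineq} directly bounds the squared $2$-Wasserstein distance between the two empirical spectral distributions by $N^{-2}\sum_{i,j}|\xi_{ij}-\tilde\xi_{ij}|^2$, which by the strong law of large numbers over the fixed infinite array converges almost surely to $\mathbb{E}|\xi_{12}-\tilde\xi_{12}|^2$, small for large truncation level $B$; and (ii) $B$ must be sent to infinity along a countable sequence, after the limit in $N$, so that the countably many null sets (from Borel--Cantelli for each moment order $k$ and each $B$) can be collected into one. Both fixes are routine, so the proof is sound. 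What your approach buys is a complete argument from the stated hypotheses (mean zero, unit variance off-diagonal; bounded mean and variance on the diagonal); what the paper's citation buys is brevity, which is reasonable since nothing in the paper's use of the lemma depends on the mechanism of its proof.
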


The next two lemmas  concern the celebrated Marchenko-Pastur law which  was introduced in \cite{Bai2010}.

\begin{lemma} \label{lemma-MP law}
Let $X_N$ be the top left $p(N) \times N$ minors of an infinite random matrix, whose entries are i.i.d. real random variable with mean zero and variance $\sigma^2$. Here, $p(N)$ is a positive integer such that $p(N)/N \rightarrow c \in (0,\infty)$ as $N\rightarrow \infty$. Then the empirical distribution of the eigenvalues of the $p \times p$ matrix
\begin{align*}
	Y^N = \dfrac{1}{N} X_NX_N^\intercal
\end{align*}
converges weakly  to the Marchenko-Pastur distribution 
\begin{align}
	\mu_{MP}(c,\sigma)(dx)
	&= \dfrac{1}{2 \pi \sigma^2 cx} \sqrt{\left(\sigma^2 (1+\sqrt{c})^2 - x\right) \left(x - \sigma^2 (1-\sqrt{c})^2\right)} 1_{[\sigma^2(1-\sqrt{c})^2, \sigma^2(1+\sqrt{c})^2]} (x) dx \notag\\
	&\quad + \left( 1 - \dfrac{1}{c} \right) \delta_0(x) dx 1_{[c>1]}\,,\label{eq:mp-law}
\end{align}
almost surely, where $\delta_0$ is the point mass at the origin.
\end{lemma}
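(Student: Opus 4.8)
The plan is to prove the almost sure weak convergence through the Stieltjes transform method, after a preliminary truncation reducing to bounded entries. Write $s_N(z) = \int (x-z)^{-1} L_N(dx) = \frac{1}{p}\,\mathrm{Tr}\big(Y^N - z I_p\big)^{-1}$ for $z$ in the open upper half-plane $\bC^{+}$, where $L_N$ is the empirical spectral distribution of $Y^N$. Since weak convergence of probability measures on $\bR$ is equivalent to pointwise convergence of their Stieltjes transforms on $\bC^{+}$, it suffices to show that $s_N(z) \to s(z)$ almost surely for each fixed $z \in \bC^{+}$, where $s$ is the Stieltjes transform of $\mu_{MP}(c,\sigma)$, and then to identify $s$ with the transform of the measure in \eqref{eq:mp-law}.

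\textbf{Truncation.} Because the entries are only assumed to have finite variance, I would first truncate each entry at a fixed level $\tau$, then recenter and rescale to restore unit variance, producing a matrix $\check{X}_N$ with uniformly bounded entries. Using the rank inequality for sample covariance matrices together with the Hoffman–Wielandt bound of Lemma \ref{lemma-Hoffman ineq}, the Lévy distance between the empirical spectral distribution of $\frac{1}{N} X_N X_N^{\intercal}$ and that of $\frac{1}{N}\check{X}_N \check{X}_N^{\intercal}$ is controlled by the (vanishing, as $\tau\to\infty$) second moment of the discarded parts. Sending $N\to\infty$ first and $\tau\to\infty$ afterwards shows the two families share the same almost sure limit, so I may assume from now on that the entries are bounded.

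\textbf{Concentration and the self-consistent equation.} For bounded entries I would proceed in two steps. First, I show $s_N(z) - \bE[s_N(z)] \to 0$ almost surely by the martingale decomposition obtained from exposing the columns of $X_N$ one at a time: each martingale difference is $O(1/p)$ by the rank-one resolvent perturbation inequality, so the Azuma–Hoeffding (or Burkholder) inequality combined with Borel–Cantelli yields the almost sure concentration. Second, I derive the limiting fixed-point equation for $\bE[s_N(z)]$ by applying the Schur complement formula to a single diagonal entry of the resolvent and invoking the concentration of the quadratic forms $\frac{1}{N}\mathbf{x}_i^{*}\big(\underline{Y}^{(i)} - z\big)^{-1}\mathbf{x}_i$ around $\frac{\sigma^2}{N}\mathrm{Tr}\big(\underline{Y}^{(i)} - z\big)^{-1}$, where $\underline{Y}^{(i)}$ is the companion matrix with the $i$-th column removed. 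With the error terms shown to vanish, any subsequential limit $s$ of $\bE[s_N]$ satisfies the quadratic equation $c\,z\,\sigma^2 s^2 - \big(z - \sigma^2(1-c)\big) s + 1 = 0$.

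Finally, I would check that this quadratic admits a unique solution mapping $\bC^{+}$ into $\bC^{+}$ — the necessary condition for $s$ to be the Stieltjes transform of a probability measure — and that applying the Stieltjes inversion formula to that root recovers exactly the density and the atom at the origin recorded in \eqref{eq:mp-law}. The main obstacle is the second part of the middle step: controlling every error term in the self-consistent equation under only the finite-variance hypothesis, which is precisely what forces the truncation reduction and what requires the quadratic-form concentration estimate; by contrast, verifying uniqueness of the admissible root and performing the inversion is routine. An alternative route is the method of moments, showing that $\frac{1}{p}\mathrm{Tr}\big(Y^N\big)^k$ converges almost surely to the $k$-th moment of $\mu_{MP}$ (a Narayana-type sum) after the same truncation, with a Carleman determinacy argument to pass from moments to the measure; this avoids resolvents but carries heavier combinatorial bookkeeping.
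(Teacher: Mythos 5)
The paper itself contains no proof of Lemma \ref{lemma-MP law}: it is stated as a classical preliminary and attributed to Bai and Silverstein \cite{Bai2010}, so your attempt can only be compared with the standard literature proof. Your route --- truncation and recentering to reduce to bounded entries, almost sure concentration of $s_N(z)$ via the column-exposing martingale together with the rank-one interlacing bound, a Schur-complement derivation of a self-consistent equation, and identification of the limit by Stieltjes inversion --- is exactly that standard proof, and at the level of detail given the architecture is sound.

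There is, however, one concrete error in the identification step: your quadratic equation is inconsistent with your declared convention $s_N(z)=\int(x-z)^{-1}L_N(dx)$. With this convention the limit must satisfy $cz\sigma^{2}s^{2}+\bigl(z-\sigma^{2}(1-c)\bigr)s+1=0$; the equation you wrote, $cz\sigma^{2}s^{2}-\bigl(z-\sigma^{2}(1-c)\bigr)s+1=0$, is the one satisfied by the opposite-convention transform $G(z)=\int(z-x)^{-1}\mu(dx)=-s(z)$, and is precisely the equation the paper records for $G_t$ in Remark \ref{remark-2'} (citing Bai--Silverstein). The discrepancy is not cosmetic under your setup: $s_N(z)=p^{-1}\bT\bigl(Y^N-zI\bigr)^{-1}$ maps $\bC^{+}$ to $\bC^{+}$ and satisfies $-iy\,s_N(iy)\to 1$ as $y\to\infty$, whereas the two roots of your equation behave like $+1/z$ and $1/(c\sigma^{2})$ at infinity, so neither is the Stieltjes transform (in your convention) of any probability measure. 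Consequently the ``unique root mapping $\bC^{+}$ into $\bC^{+}$ with the right normalization'' that your final step relies on does not exist for the equation as written, and Stieltjes inversion cannot return the density in \eqref{eq:mp-law}. The fix is immediate --- carry the Schur-complement computation through with consistent signs (the diagonal resolvent entry is $\bigl(-z+N^{-1}\|\mathbf{x}_i\|^2-\cdots\bigr)^{-1}$, which produces the $+$ sign), or switch throughout to the $G$-convention --- but as stated the last step of the proof does not close.
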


\begin{lemma} \label{lemma-MP law complex}
Let $X_N$ be the top left $p(N) \times N$ minors of an infinite random matrix, whose entries are i.i.d. complex random variable with mean zero and variance $\sigma^2$. Here, $p(N)$ is a positive integer such that $p(N)/N \rightarrow c \in (0,\infty)$ as $N\rightarrow \infty$. Then the empirical distribution of the $p \times p$ matrix
\begin{align*}
	Y^N = \dfrac{1}{N} X_NX_N^*
\end{align*}
converges almost surely to the Marchenko-Pastur distribution $\mu_{MP}(c,\sigma)(dx)$ described in Lemma \ref{lemma-MP law}.
\end{lemma}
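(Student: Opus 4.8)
The plan is to prove this by the method of moments, running the classical argument for the real Marchenko--Pastur law (Lemma~\ref{lemma-MP law}) and pinpointing the few places where the complex structure intervenes; since the computation ultimately uses only the mean-zero property and the second moment $\bE[|X_{ij}|^2]=\sigma^2$, the limit will be the same $\mu_{MP}(c,\sigma)$ as in the real case. First I would reduce to entries possessing moments of all orders by the standard truncation-and-recentering device, replacing $X_{ij}$ with a suitably centered and rescaled version of $X_{ij}1_{\{|X_{ij}|\le \eta\sqrt N\}}$. The resulting matrix differs from $X_N$ by a low-rank part plus a small-norm part, so by the rank inequality \cite[Exercise 2.4.4]{Tao2012} and the Hoffman--Wielandt inequality (Lemma~\ref{lemma-Hoffman ineq}) the two empirical spectral distributions are a.s.\ close in L\'evy distance; letting $\eta\to0$ afterwards reduces matters to the bounded-entry case.

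In the bounded case I would expand the moments of the empirical spectral distribution of $Y^N=\frac1N X_NX_N^*$ as
\[
\bE\bigl[\tfrac1p\,\bT((Y^N)^k)\bigr]=\frac{1}{pN^k}\sum_{a_1,\dots,a_k}\sum_{c_1,\dots,c_k}\bE\bigl[X_{a_1c_1}\bar X_{a_2c_1}X_{a_2c_2}\bar X_{a_3c_2}\cdots X_{a_kc_k}\bar X_{a_1c_k}\bigr],
\]
and determine which index configurations survive as $N\to\infty$. Because the factors occur in conjugate pairs $X_{ab}\bar X_{a'b}$ and the entries are centered, the dominant contributions come exactly from configurations in which every $X$ is matched with the conjugate $\bar X$ of an identically-indexed factor, each such match contributing $\bE[|X_{ab}|^2]=\sigma^2$; the non-conjugate matchings, which would produce factors $\bE[X_{ab}^2]$, are subdominant and vanish in the limit. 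This is precisely the point where the complex case must be checked separately from the real one, yet it leaves the surviving combinatorics unchanged: the leading configurations are indexed by non-crossing pair partitions, and counting them yields $\int x^k\,\mu_{MP}(c,\sigma)(dx)$, as in the real case.

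To pass from convergence of expected moments to almost sure convergence, I would estimate $\mathrm{Var}\bigl(\tfrac1p\bT((Y^N)^k)\bigr)=O(N^{-2})$ by the usual two-copy argument (cross terms survive only when the two index cycles share a connected set of edges, which costs an extra power of $N^{-1}$ per copy), so that $\sum_N\mathrm{Var}(\cdots)<\infty$ and the Borel--Cantelli lemma gives a.s.\ convergence of each moment. Since $\mu_{MP}(c,\sigma)$ is compactly supported it is determined by its moments, and hence a.s.\ convergence of all moments upgrades to a.s.\ weak convergence of the empirical spectral distribution to $\mu_{MP}(c,\sigma)$, which is the assertion. Alternatively, one may simply invoke the complex Marchenko--Pastur theorem in \cite{Bai2010}.

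I expect the main obstacle to be the second step: carefully verifying that, in the complex setting, only the conjugate pairings contribute to leading order and that these reproduce exactly the Marchenko--Pastur moments, while simultaneously controlling the variance needed for the almost sure statement. The truncation in the first step and the moment-determinacy in the third are routine.
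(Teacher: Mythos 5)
This lemma is not proved in the paper at all: it appears in Appendix \ref{sec:pre-matrix} as a quoted classical fact, with the preceding sentence attributing the Marchenko--Pastur law to \cite{Bai2010}, so the paper's ``proof'' is exactly the citation you offer as an alternative in your last sentence. Your longer argument is therefore not a variant of the paper's proof but a self-contained reconstruction of the cited theorem, and it is the standard one: truncation/recentering plus rank and Hoffman--Wielandt (Lemma \ref{lemma-Hoffman ineq}) comparisons, the trace--moment expansion, variance bounds of order $N^{-2}$ with Borel--Cantelli, and moment determinacy of the compactly supported limit. You also isolate correctly the only point where the complex case genuinely differs from the real one: in the expansion of $\frac{1}{p}\bT\left((Y^N)^k\right)$ the leading configurations are tree-like, and on a tree every repeated edge of the closed walk is traversed once in each direction, hence appears once as $X_{ac}$ and once as $\bar X_{ac}$; consequently every surviving pairing contributes $\bE[|X_{ac}|^2]=\sigma^2$ and no factors $\bE[X_{ac}^2]$ arise, which is why the lemma needs no assumption on $\bE[X_{ij}^2]$. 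Two details you elide but would need in a full write-up: after truncation the entries are bounded by $\eta\sqrt N$ rather than by an absolute constant, and it is the resulting bound $\bE|X|^{2+\ell}\le(\eta\sqrt N)^{\ell}\sigma^2$ that actually kills the non-tree configurations; and the rank/Hoffman--Wielandt comparison must be carried out at the level of the sample covariance matrices $\frac1N X_N X_N^*$ built from the rectangular factors, as in \cite{Bai2010}, which costs a short computation. Neither gap is structural; the outline is correct, and its final fallback coincides with what the paper actually does.
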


The following result characterizes  the limiting empirical  spectral distribution of the symmetric random matrix with correlated entries, which is a  direct corollary of \cite[Theorem 3]{Banna2015}.

\begin{lemma} \label{lemma-correlated entries}
Let $(\xi_{i,j})_{(i,j) \in \bZ^2}$ be an array of i.i.d. real-valued random variables with finite second moment. Let $I$ be a finite subset of $\bZ^2$, $\{a_r:r \in I\}$ be a family of constants and
\begin{align*}
	X_{i,j} = \sum_{r \in I} a_r \xi_{(i,j) + r}, \quad 1 \le i \le j.
\end{align*}
Suppose that $\bE[X_{0,0}] = \sum_{r \in I} a_r \bE[\xi_{0,0}] = 0$. Denote $\gamma_{k,l} = \gamma_{l,k} = \bE [X_{0,0} X_{k,l}]$ for all $k\le l$. Let $X^N = \left( X_{i,j}^N \right)_{1 \le i, j \le N}$ be a symmetric matrix with entries $X_{i,j}^N = X_{i,j}/\sqrt{N}$ for $1 \le i \le j \le N$. Then the  empirical spectral measure of $X^N$ converges to a nonrandom probability measure $\mu_c$ with Stieltjes transform $S_c(z) = \int_0^1 h(x,z) dx$, where $h(x,z)$ is the solution to the equation
\begin{align*}
	h(x,z) = \left( -z + \int_0^1 f(x,y) h(y,z) dy \right)^{-1} ~\text{ with }~ f(x,y) = \sum_{k,l\in \bZ} \gamma_{k,l} e^{-2\pi i(kx+ly)}.
\end{align*}
\end{lemma}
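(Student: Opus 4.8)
The plan is to recognize $(X_{i,j})_{(i,j)\in\bZ^2}$ as a stationary linear random field and then to invoke \cite[Theorem 3]{Banna2015}, of which the statement is a direct specialization. All the probabilistic work is already contained in that theorem; the task here is to verify its hypotheses in the present finite-filter setting and to match the notation, so no new estimate should be required.

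First I would observe that $(X_{i,j})_{(i,j)\in\bZ^2}$ is strictly stationary: it is the image of the i.i.d. field $(\xi_{i,j})$ under the shift-commuting finite linear filter $X_{i,j} = \sum_{r\in I} a_r \xi_{(i,j)+r}$, so the law of $(X_{(i,j)+v})_{(i,j)}$ is independent of the shift $v\in\bZ^2$. The centering assumption $\sum_{r\in I} a_r \bE[\xi_{0,0}]=0$ gives $\bE[X_{0,0}]=0$, while the finiteness of $\bE[\xi_{0,0}^2]$ together with $\#I<\infty$ gives $\bE[X_{0,0}^2]<\infty$. Crucially, because $I$ is finite, $X_{i,j}$ depends on only finitely many coordinates of the i.i.d. field, so $X_{i,j}$ and $X_{k,l}$ are independent whenever the windows $(i,j)+I$ and $(k,l)+I$ are disjoint; that is, the field is $|I|$-dependent. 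This finite range of dependence makes the weak-dependence (physical-dependence-measure summability) condition needed in \cite[Theorem 3]{Banna2015} trivially satisfied, since the associated dependence coefficients vanish beyond lag $|I|$.

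Next I would identify the second-order structure. Since distinct entries of the i.i.d. field are uncorrelated, $\gamma_{k,l}=\bE[X_{0,0}X_{k,l}]$ is supported on the finitely many lags $(k,l)$ with $I\cap(I+(k,l))\neq\emptyset$, and equals $\bE[\xi_{0,0}^2]\sum_{r\in I\cap(I+(k,l))} a_r a_{r-(k,l)}$. Consequently the spectral density $f(x,y)=\sum_{k,l\in\bZ}\gamma_{k,l}e^{-2\pi i(kx+ly)}$ is a real, bounded trigonometric polynomial, which again trivially meets any regularity requirement imposed by the cited theorem.

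With every hypothesis verified, \cite[Theorem 3]{Banna2015} applies and yields the almost-sure convergence of the empirical spectral measure of $X^N=(X_{i,j}/\sqrt N)$ to a nonrandom probability measure whose Stieltjes transform is $S_c(z)=\int_0^1 h(x,z)\,dx$, with $h(x,z)$ solving the stated fixed-point equation driven by $f$. The main — and essentially only — obstacle is bookkeeping: one must check that the normalization of the symmetric matrix, the parametrization of the frequency variable over $[0,1]$, the Fourier sign convention defining $f$, and the precise form of the fixed-point equation in \cite{Banna2015} coincide with those in the statement, after which matching notation concludes the proof.
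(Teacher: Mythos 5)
Your proposal matches the paper's treatment: the paper gives no separate proof of this lemma, presenting it as a direct corollary of \cite[Theorem 3]{Banna2015}, which is exactly the reduction you carry out (verify stationarity, centering, finite-range dependence, and the form of the spectral density, then invoke that theorem). One small slip in your side computation: since only $\sum_{r\in I} a_r \bE[\xi_{0,0}]=0$ is assumed (not $\bE[\xi_{0,0}]=0$), the explicit covariance is $\gamma_{k,l}=\mathrm{Var}(\xi_{0,0})\sum_{r\in I\cap(I+(k,l))}a_r a_{r-(k,l)}$ rather than the same expression with $\bE[\xi_{0,0}^2]$; this is immaterial here because the lemma defines $\gamma_{k,l}=\bE[X_{0,0}X_{k,l}]$ abstractly, but note the paper's own application in Theorem \ref{Thm-LED dependence} indeed uses the variance $d_t^2$.
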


\section{Tightness criterions for probability measures on  $C([0,T], \mathbf P(\bR))$} \label{subset:tightness argument}

In this section, we collect some lemmas used in the proofs, and then we  provide two tightness criterions for probability measures on $C([0,T], \mathbf P(\bR))$ (Theorems \ref{Thm-0} and \ref{Thm-0'}).  We also provide sufficient conditions for tightness which can be verified by computing moments (Propositions \ref{Thm-1}, \ref{Thm-1'} and  \ref{remark:b1}).

Note that there has been fruitful literature on tightness of probability measures on a Skorohod space $\mathcal D([0,T], E)$, where $E$ is a completely regular topological space.  We refer the interested reader to \cite{Mitoma, jaku, EK, Dawson1993, Perkins, KX, Kouritzin} and the references therein.  Theorem \ref{Thm-0} is a direct consequence of  Jakubowski's criterion \cite{jaku} (see also e.g,  \cite[Theorem 3.6.4]{Dawson1993} and \cite{Sun2011} for the statement of the criterion), noting that $C([0,T], \mathbf P(\bR))$ is a closed subset of $\mathcal D([0,T], \mathbf P(\bR))$.  Theorem \ref{Thm-0'} might be also well-known in the literature of tightness criterion for probability measures, but we could not find a reference addressing this explicitly. For both Theorems \ref{Thm-0} and \ref{Thm-0'}, we include self-contained proofs for the reader's convenience.

 Recall that  $\mathbf P(\bR)$ is the set of  probability measures on $\bR$ endowed with its weak topology, and that $C([0,T], \mathbf P(\bR))$ is the space of continuous probability-measure-valued processes, both of which are Polish spaces. Denote by $C_0(\bR)$ the set of continuous functions on $\bR$  vanishing at infinity, which is also  a Polish space. Also the space $\tilde{\mathbf P}(\bR)$ of sub-probabilities on $\bR$ endowed with its vague topology is a Polish space (see, e.g., \cite[Theorem 4.2]{Kallenberg}), and so is $C([0,T], \tilde{\mathbf P}(\bR)).$

   Let's also recall some basic facts for probability measures on a Polish space $X$ (see, e.g., \cite{Billingsley} for details). Denote by $\mathbf P(X)$ the set of probability measures on  $(X,\mathcal B_X)$ where $\mathcal B_X$ is the Borel $\sigma$-field on the Polish space $X$.  Let $\Pi\subset\mathbf P(X)$ be a family of probability measures on $X$. The family $\Pi$ is called tight if for every $\e\in(0,1)$,  there exists a compact set $K^\e\subset X$ such that $P(K)>1-\e$ for all $P\in \Pi$. The family $\Pi$ is called relatively compact if every sequence of elements of $\Pi$ contains a weakly convergent subsequence. The Prokhorov's theorem guarantees the equivalence between tightness and relatively compactness. Also note that a sequence  $P_n\in \mathbf P(X)$ converges weakly  to $P\in \mathbf P(X)$ if and only if  $P_n$ converges to $P$ in the Polish space $\mathbf P(X)$.

The following lemma (see, e.g., \cite[Theorem 3.2.14]{Durrett2019}) provides a method to obtain tightness for a set of probability measures.
\begin{lemma} \label{Lemma-B3'}
Let $\mathbb I$ be an index set.  If there is a non-negative function $\varphi$ so that $\varphi(x) \rightarrow \infty$ as $|x| \rightarrow \infty$ and
\begin{align*}
	 \sup_{i \in \mathbb I} \int_{\bR} \varphi(x) \mu_n(dx) < \infty,
\end{align*}
then the family of probability measures $\{\mu_i\}_{i\in \mathbb I}$ is tight.
\end{lemma}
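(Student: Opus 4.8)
The plan is to apply Markov's inequality together with the coercivity of $\varphi$, which is the standard route to tightness on $\bR$. First I would set $M := \sup_{i \in \mathbb I} \int_{\bR} \varphi(x)\, \mu_i(dx)$, which is finite by hypothesis, and fix an arbitrary $\e \in (0,1)$. The goal is to produce a compact set $K^{\e} \subset \bR$ with $\mu_i(K^{\e}) > 1 - \e$ for every $i \in \mathbb I$; once this is achieved, tightness holds by definition.

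The key estimate is Markov's inequality: since $\varphi \ge 0$, for any level $L > 0$,
\begin{align*}
	\mu_i\big(\{x : \varphi(x) > L\}\big) \le \frac{1}{L} \int_{\bR} \varphi(x)\, \mu_i(dx) \le \frac{M}{L},
\end{align*}
uniformly in $i \in \mathbb I$. Choosing $L = M/\e$ makes the right-hand side at most $\e$ simultaneously for all $i$.

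Next I would use the assumption $\varphi(x) \to \infty$ as $|x| \to \infty$ to convert this superlevel estimate into control of the mass outside a bounded interval. By coercivity there exists $R = R(\e) > 0$ such that $\varphi(x) > L$ whenever $|x| > R$; equivalently $\bR \setminus [-R, R] \subseteq \{x : \varphi(x) > L\}$. Consequently $\mu_i(\bR \setminus [-R,R]) \le M/L \le \e$, so that $\mu_i([-R,R]) \ge 1 - \e$ for every $i \in \mathbb I$. Taking $K^{\e} = [-R,R]$, which is compact in $\bR$, gives the required uniform lower bound and hence the tightness of $\{\mu_i\}_{i \in \mathbb I}$.

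I do not expect any substantial obstacle; the argument is entirely elementary. The only point deserving a moment's attention is the passage from the superlevel set $\{\varphi > L\}$ to a genuine compact set: on $\bR$ this is immediate, since coercivity forces $\{\varphi \le L\}$ to be bounded and closed intervals are compact, so no continuity or other regularity of $\varphi$ is needed. If desired, one may then upgrade tightness to relative compactness through Prokhorov's theorem, as recalled earlier in this section.
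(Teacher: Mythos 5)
Your proof is correct: the Markov-inequality-plus-coercivity argument, with $L=M/\e$ and the compact set $K^{\e}=[-R,R]$ chosen so that $\varphi>L$ off $[-R,R]$, is exactly the standard proof of this fact. The paper itself offers no proof of Lemma \ref{Lemma-B3'} (it simply cites \cite[Theorem 3.2.14]{Durrett2019}), and your argument is precisely the one given in that reference, so there is nothing to reconcile; at most you might guard the degenerate case $M=0$ by taking $L=(M+1)/\e$, which changes nothing essential.
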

 Based on the above tightness criterion, one can construct compact subsets of $\mathbf P(\bR)$:
\begin{lemma} \label{Lemma-B3}
A set of the form 
 \[K=\left\{\mu\in \mathbf P(\bR): \int_{\bR} \varphi(x) \mu(dx)\le M \right\}\]
 is compact in $\mathbf P(\bR)$, where $M$ is a positive constant and $\varphi(x)$ is given in Lemma \ref{Lemma-B3'}.
\end{lemma}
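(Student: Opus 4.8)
The plan is to combine Prokhorov's theorem with a lower-semicontinuity argument: I will first show that $K$ is relatively compact, then that $K$ is closed, and conclude that a closed relatively compact subset of $\mathbf P(\bR)$ is compact. Since $\mathbf P(\bR)$ is Polish (in particular metrizable, e.g.\ via the L\'evy--Prokhorov metric), compactness coincides with sequential compactness, so I may argue throughout with sequences.

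First I would apply Lemma \ref{Lemma-B3'} with the index set $\mathbb I = K$ itself. By the very definition of $K$, every $\mu \in K$ satisfies $\int_{\bR} \varphi(x)\,\mu(dx) \le M$, hence $\sup_{\mu \in K}\int_{\bR}\varphi\,d\mu \le M < \infty$, and Lemma \ref{Lemma-B3'} yields that $K$ is tight. By Prokhorov's theorem (recalled in the paragraph preceding Lemma \ref{Lemma-B3'}), tightness of $K$ is equivalent to its relative compactness, so the closure $\overline K$ is compact in $\mathbf P(\bR)$.

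Next I would show that $K$ is closed. Let $\mu_n \in K$ with $\mu_n \to \mu$ weakly for some $\mu \in \mathbf P(\bR)$; the point is to check $\mu \in K$, i.e.\ $\int_{\bR}\varphi\,d\mu \le M$. The key observation is that $\mu \mapsto \int_{\bR}\varphi\,d\mu$ is lower semicontinuous under weak convergence because $\varphi$ is non-negative and continuous. Concretely, for each $k \in \bN$ the truncation $\varphi \wedge k$ is bounded and continuous, so weak convergence gives $\int_{\bR}(\varphi\wedge k)\,d\mu = \lim_{n}\int_{\bR}(\varphi \wedge k)\,d\mu_n \le \sup_n \int_{\bR}\varphi\,d\mu_n \le M$. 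Letting $k\to\infty$ and invoking the monotone convergence theorem yields $\int_{\bR}\varphi\,d\mu \le M$, so $\mu\in K$. Thus $K$ is closed, whence $K = \overline K$ is compact.

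The main obstacle is exactly this closedness step, and more precisely the lower semicontinuity of the functional $\mu\mapsto\int\varphi\,d\mu$. If $\varphi$ were merely non-negative (as literally written in Lemma \ref{Lemma-B3'}), a downward jump of $\varphi$ could raise the integral in the limit and destroy closedness; the truncation argument works because $\varphi$ is continuous, so each $\varphi\wedge k$ is a genuine bounded continuous test function for weak convergence. I would therefore record the standing assumption that $\varphi$ is continuous, which is harmless here since $\varphi$ is taken in $C^1(\bR)$ wherever Lemma \ref{Lemma-B3} is invoked (e.g.\ in Theorem \ref{Thm-tightness}).
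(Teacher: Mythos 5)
Your proof is correct and takes essentially the same route as the paper: tightness of $K$ from Lemma \ref{Lemma-B3'} together with Prokhorov's theorem, reducing everything to closedness of $K$ in $\mathbf P(\bR)$. The paper dismisses the closedness step as ``easy to verify''; your truncation argument with $\varphi\wedge k$ and monotone convergence is precisely that verification, and your observation that it requires continuity (or at least lower semicontinuity) of $\varphi$ --- which Lemma \ref{Lemma-B3'} as literally stated does not assume, but which holds in every application since $\varphi\in C^1(\bR)$ --- is a fair and worthwhile clarification.
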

\begin{proof}
By Lemma \ref{Lemma-B3'} and Prokhorov's theorem (see, e.g., \cite[Theorems 5.1 and 5.2]{Billingsley}) which claims that a  subset $A$ of $\mathbf P(\bR)$  is tight if and only if the closure of $A$ is compact,  it suffices to show that $K$ is a closed set in $\mathbf P(\bR)$, which is easy to verify. 
\end{proof}

By the Arzela-Ascoli Theorem, we have the following lemma to construct compact sets in $C([0,T],\mathbf P(\bR))$.
\begin{lemma} \label{Lemma-B4}
\begin{align*}
	\mathfrak C = \bigcap_{n \in \bN} \left\{ g \in C([0,T], \bR): \sup_{t,s \in [0,T], |t-s| \le \eta_n} |g(t) - g(s)| \le \e_n, \sup_{t \in [0,T]} |g(t)| \le M \right\},
\end{align*}
is  compact in $C([0,T], \mathbf P(\bR))$, where $M$ is a positive constant and $\{\e_n, n \in \bN\}$ and $\{\eta_n, n \in \bN\}$ are two sequences of positive  numbers going to zero as $n$ goes to infinity.
\end{lemma}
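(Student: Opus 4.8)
The plan is to recognize $\mathfrak{C}$ as a standard Arzel\`a--Ascoli compactness set and invoke that theorem directly. Here I read the ambient space in the conclusion as $C([0,T],\bR)$ (the ``$\mathbf P(\bR)$'' appears to be a typo, since the elements of $\mathfrak C$ are real-valued functions), which is also the way the lemma is used in the proof of Theorem \ref{Thm-tightness}. The two families of defining conditions encode precisely the two hypotheses of Arzel\`a--Ascoli, namely uniform boundedness and uniform equicontinuity, so the whole proof reduces to checking these and then verifying closedness.

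First I would read off uniform boundedness: by definition every $g\in\mathfrak C$ satisfies $\sup_{t\in[0,T]}|g(t)|\le M$, so $\mathfrak C$ sits inside a fixed ball of $C([0,T],\bR)$. Next I would verify equicontinuity, and this is the only place where the hypotheses $\e_n\to0$ and $\eta_n\to0$ are used. Given any $\delta>0$, I pick $n$ with $\e_n\le\delta$; then for every $g\in\mathfrak C$ and all $s,t\in[0,T]$ with $|t-s|\le\eta_n$ one has $|g(t)-g(s)|\le\e_n\le\delta$. Since $\eta_n>0$ and $g\in\mathfrak C$ was arbitrary, this is exactly uniform equicontinuity of the family $\mathfrak C$. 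By the Arzel\`a--Ascoli theorem, $\mathfrak C$ is therefore relatively compact in $C([0,T],\bR)$; that is, its closure is compact.

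To upgrade relative compactness to compactness I would show that $\mathfrak C$ is itself closed. Each constituent set $\{g:\sup_{|t-s|\le\eta_n}|g(t)-g(s)|\le\e_n\}$ and $\{g:\sup_{t\in[0,T]}|g(t)|\le M\}$ is closed under uniform convergence: if $g_k\to g$ uniformly then $|g_k(t)-g_k(s)|\to|g(t)-g(s)|$ and $|g_k(t)|\to|g(t)|$ for each fixed $s,t$, and a non-strict inequality that holds for every $g_k$ survives passage to the supremum and to the limit. Hence each defining set is closed, and $\mathfrak C$, being the intersection over $n\in\bN$ of closed sets, is closed. A closed subset of a set with compact closure is compact, which yields the claim.

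I do not expect a genuine obstacle here; the statement is a routine repackaging of Arzel\`a--Ascoli. The only points that require any care are the trivial but essential observation that $\e_n,\eta_n\to0$ is exactly what converts the nested modulus-of-continuity bounds into honest uniform equicontinuity, and the verification that each defining constraint is preserved under uniform limits, so that $\mathfrak C$ is genuinely closed rather than merely precompact.
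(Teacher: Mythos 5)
Your proof is correct and is exactly the argument the paper intends: the lemma is stated as a direct consequence of the Arzel\`a--Ascoli theorem (the paper gives no further proof), and you have simply supplied the standard details — uniform boundedness, uniform equicontinuity from $\e_n,\eta_n\to0$, and closedness of $\mathfrak C$ under uniform limits. You are also right that the ambient space should read $C([0,T],\bR)$ rather than $C([0,T],\mathbf P(\bR))$; this is a typo, as confirmed by the way the lemma is invoked in the proof of Theorem \ref{Thm-tightness}.
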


The following lemma  (\cite[Lemma 4.3.13]{Anderson2010}) provides an approach to construct compact subsets in $C([0,T], \mathbf P(\bR))$. It will be used in the proof of Theorem \ref{Thm-0}. 
\begin{lemma} \label{lemma in Anderson}
Let $K$ be a compact subset of  $\mathbf P(\bR)$, let $\{f_i\}_{i \in \bN}$ be a sequence of bounded continuous functions that is dense in $C_0(\bR)$, and let $\{C_i\}_{i\in\bN}$ be a family of compact subsets of $C([0,T], \bR)$.  Then the set
\begin{align*}
	\mathfrak C = \Big\{ \mu\in C([0,T], \mathbf P(\bR)):  \mu_t \in K,  \forall t \in [0,T] \Big\} \cap \bigcap_{i \in \bN} \left\{ t \rightarrow \mu_t(f_i) \in C_i \right\}
\end{align*}
is a compact subset of $C([0,T], \mathbf P(\bR))$.
\end{lemma}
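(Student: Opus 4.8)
The plan is to use that $C([0,T],\mathbf P(\bR))$ is Polish, so that compactness is equivalent to sequential compactness, and then to show that an arbitrary sequence $\{\mu^{(n)}\}_{n\in\bN}\subset\mathfrak C$ admits a subsequence converging in $C([0,T],\mathbf P(\bR))$ to a limit lying in $\mathfrak C$. The first step I would record is a description of the topology on $K$. Since $\{f_i\}_{i\in\bN}$ is dense in $C_0(\bR)$ and $C_0(\bR)$ determines finite Borel measures (by the Riesz representation theorem), the evaluation maps $\pi_i:\nu\mapsto\nu(f_i)$ are continuous on $\mathbf P(\bR)$ (as $f_i\in C_0(\bR)\subset C_b(\bR)$) and the family $\{\pi_i\}$ separates the points of $K$. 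Hence $\Phi:=(\pi_i)_{i\in\bN}$ is a continuous injection of the compact space $K$ into the Hausdorff space $\bR^{\bN}$, so it is a homeomorphism onto its (compact, hence closed) image; consequently the weak topology on $K$ is metrized by $\rho(\nu,\nu')=\sum_{i\in\bN}2^{-i}\big(1\wedge|\nu(f_i)-\nu'(f_i)|\big)$.

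Next I would perform a diagonal extraction. For each fixed $i$ the functions $t\mapsto\mu^{(n)}_t(f_i)$ all lie in the compact set $C_i\subset C([0,T],\bR)$, so after passing to a subsequence and diagonalizing over $i$, I obtain a single subsequence (not relabeled) along which $\mu^{(n)}_\cdot(f_i)\to g_i$ uniformly on $[0,T]$ for every $i$, with $g_i\in C_i$ since $C_i$ is closed. For each fixed $t$ the convergence $\pi_i(\mu^{(n)}_t)\to g_i(t)$ for all $i$ means that $(\pi_i(\mu^{(n)}_t))_{i}$ converges in $\bR^{\bN}$; because $\Phi(K)$ is closed in $\bR^{\bN}$, the limit equals $\Phi(\mu_t)$ for a unique $\mu_t\in K$, and $\mu_t(f_i)=g_i(t)$. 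Thus $\mu^{(n)}_t\to\mu_t$ weakly for every $t$, with $\mu_t\in K$.

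Finally I would upgrade pointwise convergence to uniform convergence in $\rho$. Given $\e>0$, choose $I$ with $\sum_{i>I}2^{-i}<\e/2$; the uniform convergence $\mu^{(n)}_\cdot(f_i)\to g_i=\mu_\cdot(f_i)$ for each $i\le I$ gives, for all large $n$, the bound $\sup_t|\mu^{(n)}_t(f_i)-\mu_t(f_i)|<\e/2$, whence $\sup_t\rho(\mu^{(n)}_t,\mu_t)<\e$. Therefore $\mu^{(n)}\to\mu$ uniformly in the metric $\rho$ that generates the weak topology on $K$, which is exactly convergence in $C([0,T],\mathbf P(\bR))$. Being a uniform limit of the continuous maps $\mu^{(n)}$, the limit $\mu$ is continuous, so $\mu\in C([0,T],\mathbf P(\bR))$; and since $\mu_t\in K$ for all $t$ and $t\mapsto\mu_t(f_i)=g_i\in C_i$ for every $i$, we get $\mu\in\mathfrak C$. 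This yields sequential compactness, hence compactness, of $\mathfrak C$.

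The step I expect to be the main obstacle is the first one: verifying that on the compact set $K$ the weak topology is genuinely generated and metrized by the countable family $\{\pi_i\}$. This is where both the density of $\{f_i\}$ in $C_0(\bR)$ (to separate points) and the compactness of $K$ (to promote a continuous separating injection into a homeomorphism onto its image, and thereby reconcile the vague and weak topologies) are indispensable; once the metric $\rho$ is in hand, the diagonal extraction and the tail estimate are routine.
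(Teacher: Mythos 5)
Your proof is correct, but it identifies the limit in a genuinely different way from the paper. The paper does not prove this lemma directly (it cites \cite[Lemma 4.3.13]{Anderson2010}), but it reproduces the same style of argument for the parallel Lemma \ref{lem:compact}: there one performs a double diagonal extraction, over the indices $i$ (using compactness of the $C_i$) \emph{and} over a countable dense set of times $D\subset[0,T]$ (using pointwise relative compactness of the measures to get $\mu^{\phi(n)}_t\to\mu_t$ for each $t\in D$), and then extends the limiting measure-valued map from $D$ to all of $[0,T]$ by continuity and density. You avoid the time diagonalization entirely: your key step is that on the compact set $K$ the weak topology is metrized by $\rho(\nu,\nu')=\sum_{i}2^{-i}\bigl(1\wedge|\nu(f_i)-\nu'(f_i)|\bigr)$, obtained from the continuous injection $\Phi=(\pi_i)_{i\in\bN}$ of $K$ into $\bR^{\bN}$, which is a homeomorphism onto its compact (hence closed) image. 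With that in hand, a single diagonal extraction over $i$ recovers $\mu_t\in K$ for \emph{every} $t$ at once (closedness of $\Phi(K)$), and uniform convergence follows from the tail estimate. Your route has the merit of making explicit exactly where compactness of $K$ is used: it upgrades the countable separating family to a metrization and rules out escape of mass, reconciling vague and weak convergence. The paper's route has the complementary merit of transferring verbatim to the sub-probability setting of Lemma \ref{lem:compact}, where no compact $K$ is prescribed and one relies instead on vague compactness in $\tilde{\mathbf P}(\bR)$. The one step you leave implicit, passing from uniform convergence in $\rho$ to convergence in $C([0,T],\mathbf P(\bR))$, is harmless: since all processes take values in the compact set $K$, any two metrics inducing its topology are uniformly equivalent there, so uniform $\rho$-convergence coincides with uniform convergence in, say, the L\'evy--Prokhorov metric.
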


The following lemma, which constructs compacts subsets in $C([0,T], \tilde {P}(\bR))$, will play a critical role in the proof of Theorem \ref{Thm-0'}.

\begin{lemma} \label{lem:compact}
Let $\{f_i\}_{i \in\bN}$ be a countable dense subset of $C_0(\bR)$,  and let $\{C_i\}_{i\in\bN}$ be a family of compact subsets of $C([0,T], \bR)$. Then the set
\begin{align*}
	\mathcal K =  \bigcap_{i \in\bN} \left\{ t \rightarrow \mu_t(f_i) \in C_i \right\}
\end{align*}
is a compact subset of $C([0,T], \tilde{\mathbf P}(\bR))$. 
\end{lemma}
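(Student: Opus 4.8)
The plan is to exploit the feature that distinguishes $\tilde{\mathbf P}(\bR)$ from $\mathbf P(\bR)$: while $\mathbf P(\bR)$ with the weak topology fails to be compact, the space $\tilde{\mathbf P}(\bR)$ of sub-probabilities with the vague topology is \emph{compact} and metrizable, and this is precisely what makes the passage to sub-probabilities useful here. First I would fix a metric for the vague topology built from the given dense family. Since $\{f_i\}$ is dense in $C_0(\bR)$ and every $\mu\in\tilde{\mathbf P}(\bR)$ has total mass at most $1$, the uniform mass bound lets one pass from convergence of $\mu^{(n)}(f_i)$ for all $i$ to convergence of $\mu^{(n)}(f)$ for every $f\in C_0(\bR)$; hence vague convergence is equivalent to convergence of all the coordinates $\mu\mapsto\mu(f_i)$, and
\[
 d(\mu,\nu)=\sum_{i=1}^{\infty}2^{-i}\,\frac{|\mu(f_i)-\nu(f_i)|}{1+|\mu(f_i)-\nu(f_i)|}
\]
is a compatible metric making $(\tilde{\mathbf P}(\bR),d)$ a compact metric space (compactness coming from Banach-Alaoglu applied to the unit ball of the dual of $C_0(\bR)$, restricted to the vaguely closed set of positive measures of mass $\le 1$). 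On $C([0,T],\tilde{\mathbf P}(\bR))$ I would use the uniform metric $D(\mu,\nu)=\sup_{t\in[0,T]}d(\mu_t,\nu_t)$.

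Because the target $\tilde{\mathbf P}(\bR)$ is compact, the Arzela-Ascoli theorem for maps into a compact metric space reduces relative compactness of a subset of $C([0,T],\tilde{\mathbf P}(\bR))$ to (uniform) equicontinuity, the pointwise relative compactness being automatic. So the central step is to show that $\mathcal K$ is equicontinuous. Fix $\e>0$ and choose $I$ with $2^{-I}<\e/2$, so that the tail $\sum_{i>I}2^{-i}$ of the series defining $d$ is $<\e/2$ uniformly over all pairs. For each $i\le I$, the set $C_i$ is compact in $C([0,T],\bR)$, hence equicontinuous, so there is $\delta_i>0$ with $|t-s|\le\delta_i$ implying $|g(t)-g(s)|\le\e/2$ for every $g\in C_i$; applying this to the path $g=\mu_{\cdot}(f_i)$, which lies in $C_i$ for every $\mu\in\mathcal K$ by the very definition of $\mathcal K$, and summing the series, one obtains $d(\mu_t,\mu_s)<\e$ whenever $|t-s|\le\delta:=\min_{i\le I}\delta_i$, uniformly in $\mu\in\mathcal K$.

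It then remains to verify that $\mathcal K$ is closed, which upgrades relative compactness to compactness. If $\mu^{(n)}\in\mathcal K$ and $D(\mu^{(n)},\mu)\to 0$, then for each fixed $i$ the elementary bound $2^{-i}\rho(\mu^{(n)}_t(f_i),\mu_t(f_i))\le D(\mu^{(n)},\mu)$, with $\rho(x,y)=|x-y|/(1+|x-y|)$, together with the uniform bound $|\mu_t(f_i)|\le\|f_i\|_{\infty}$, forces $\sup_{t\in[0,T]}|\mu^{(n)}_t(f_i)-\mu_t(f_i)|\to 0$; thus $\mu^{(n)}_{\cdot}(f_i)\to\mu_{\cdot}(f_i)$ in $C([0,T],\bR)$, and since $C_i$ is closed the limit path belongs to $C_i$. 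Hence $\mu\in\mathcal K$, so $\mathcal K$ is closed, and combined with the equicontinuity above it is compact. The main obstacle I anticipate is not any single estimate but the conceptual bookkeeping: tying the vague-topology metric correctly to the test functions $f_i$, and invoking the compactness of $\tilde{\mathbf P}(\bR)$ so that the pointwise condition in Arzela-Ascoli is free; once that is in place, the transfer of equicontinuity through the tail-truncation of the series and the closedness argument are routine.
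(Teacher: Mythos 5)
Your proof is correct, but it takes a genuinely different route from the paper's. The paper argues sequential compactness directly by a diagonal extraction: given a sequence in $\mathcal K$, it uses compactness of each $C_i$ to make the coordinate paths $t \mapsto \mu_t^{(n)}(f_i)$ converge in $C_i$, uses vague sequential compactness of $\tilde{\mathbf P}(\bR)$ (Helly-type selection) to make $\mu_t^{(n)}$ converge for $t$ in a countable dense set of times, and then glues: the limits $\varphi_i(t)=\lim_n \mu_t^{(n)}(f_i)$ are continuous, agree with $\mu_t(f_i)$ on the dense time set, and by density of $\{f_i\}$ in $C_0(\bR)$ the family $\{\mu_t\}$ extends uniquely to a continuous $\tilde{\mathbf P}(\bR)$-valued limit path. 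You instead package the same underlying compactness fact (positive measures of mass $\le 1$ form a weak-$*$ compact, metrizable subset of $C_0(\bR)^*$ by Banach--Alaoglu and separability) into an explicit metric $d$ built from the $f_i$, and then invoke the generalized Arzel\`a--Ascoli theorem for maps into a compact metric target, so that pointwise relative compactness is free and everything reduces to (i) equicontinuity of $\mathcal K$, which you transfer from equicontinuity of the compact sets $C_i$ via tail truncation of the series, and (ii) closedness of $\mathcal K$, which follows since each $C_i$ is closed. What your route buys is that it avoids the diagonal procedure and, more importantly, the extension/gluing step, which the paper treats rather tersely (``one can extend \dots\ uniquely''); it also makes visible exactly where each hypothesis enters (compactness of $C_i$ $\Rightarrow$ equicontinuity, closedness of $C_i$ $\Rightarrow$ closedness of $\mathcal K$, density of $\{f_i\}$ plus the mass bound $\Rightarrow$ the metrization). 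What the paper's route buys is self-containedness: it needs no explicit metric on $\tilde{\mathbf P}(\bR)$ and no Ascoli theorem for metric-space-valued maps, and it mirrors the argument of \cite[Lemma 4.3.13]{Anderson2010} already used elsewhere in the paper. One small remark on your write-up: the uniform bound $|\mu_t(f_i)|\le \|f_i\|_\infty$ you invoke in the closedness step is not actually needed, since $u \mapsto u/(1+u)$ is an increasing homeomorphism of $[0,\infty)$ onto $[0,1)$, so uniform smallness of $\rho$ already forces uniform smallness of $|\mu_t^{(n)}(f_i)-\mu_t(f_i)|$.
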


\begin{proof}
The proof is  similar to the proof of Lemma 4.3.13 in \cite{Anderson2010},  which is provided here for the reader's convenience. 

Noting that $\mathcal K$ is a closed subset of $C([0,T],\tilde{\mathbf P}(\bR))$ which is a Polish space, it suffices to prove that $\mathcal K$ is sequentially compact.  

Take a sequence $\mu^{(n)} \in \mathcal{K}$. Then the functions $t \rightarrow \mu_t^{(n)}(f_i) \in C_i$ for $\in \bN$. Let $D$ be a countable dense subset of $[0,T]$. Note that for each $t\in D$, $\mu_t^{(n)}$ has a subsequence that converges vaguely, i.e., converges in the Polish space $\tilde {\mathbf P}(\bR)$. Then by the diagonal procedure and the compactness of $C_i$, we can find a subsequence $\mu^{\phi(n)}$ such that $t \rightarrow \mu_t^{\phi(n)}(f_i)$ converges in $C_i$ for all $i\in \bN$ and $\mu_t^{\phi(n)}$ converges in $\tilde{\mathbf P}(\bR)$ for all $t \in D$, as $n$ tends to infinity. Denoting $\varphi_i(t):=\lim_{n \rightarrow \infty} \mu_t^{\phi(n)}(f_i)$ for all $i \in \mathbb{N}, t \in [0,T]$ and $\mu_t:=\lim_{n \rightarrow \infty} \mu_t^{\phi(n)}$ for all $t \in D$, then $\varphi_i\in C_i\subset C([0,T], \bR)$ for $i\in \bN$ and $\mu_t\in \tilde{\mathbf P}(\bR)$ for $t\in D$. The vague convergence of the measures $\mu_t^{\phi(n)}$ for $t\in D$ implies that $\varphi_i(t) = \mu_t(f_i)$ for all $i \in \mathbb{N}$ and $t \in D$. Noting that $\{f_i\}_{i \in \mathbb{N}}$ is dense in $C_0(\mathbb{R})$, $D$ is dense in $[0,T]$, $\varphi_i(t)$ is continuous, one can extend the family  $\{\mu_t, t \in D\}$ of sub-probability measures uniquely to a sub-probability-measure-valued process $\{\nu_t , t\in [0,T]\}\in C([0,T], \tilde{\mathbf P}(\mathbb{R}))$ such that  $\lim_{n \rightarrow \infty} \mu^{\phi(n)} = \nu$. This shows that $\mathcal K$ is sequentially compact in $C([0,T], \tilde{\mathbf P}(\bR))$, and the proof is completed. 
\end{proof}

  
   
Throughout the rest of the section, let  $T$ be a fixed positive number, and let $\{\mu^{(n)}\}_{n\in \bN}:=\{\mu_t^{(n)}, t \in [0,T]\}_{n \in \bN}\subset C([0,T], \mathbf P(\bR))$ be a sequence of continuous probability-measure-valued stochastic processes.  We assume the following conditions on $\{\mu^{(n)}\}_{n\in\bN}$, which will be used in Theorems \ref{Thm-0} and \ref{Thm-0'}. 
\begin{enumerate}
	\item[(I)]  For any $\e>0$, there exists a compact set $K^\e$ in $\mathbf P(\bR)$, such that for all $n\in\bN$, \[\mathbb P\left(\mu_t^{(n)}\in K^\e, \forall t\in [0,T]\right)\ge1-\e.\] 
	
	\item[(I')] For each $t\in[0,T]$, the family of $\mathbf P(\bR)$-valued random elements $\{ \mu_{t}^{(n)}: \Omega \rightarrow \mathbf P(\bR) \}_{n \in \bN}$ is tight. 
	
	\item[(II)] There exists a countable dense subset $\{f_i\}_{i \in \bN}$ of $C_0(\bR)$, such that for each $f_i$, $\{ \langle f_i, \mu_t^{(n)} \rangle, t \in [0,T] \}_{n \in \bN}$ is tight on $C([0,T], \bR)$.
\end{enumerate}

 We also list some conditions that imply the above conditions. 

Let $\varphi(x)$ a be nonnegative function such that $\lim\limits_{|x|\to\infty}\varphi(x) =\infty$. 
\begin{enumerate}
\item[(A)]
\begin{equation}\label{e:uniform-bound}
\sup_{n\in\bN}\bE\left[\sup_{t\in[0,T]} \langle \varphi,\mu_t^{(n)}\rangle\right]<\infty.
\end{equation}
 
\item[(A')] For each $t\in [0,T]$,  the family  $\{\langle \varphi, \mu_t^{(n)} \rangle\}_{n \in \bN}$ of random variables is tight.
\item[(A'')]   For each $t\in [0,T]$, 
 \begin{equation}\label{e:B4}
 \sup_{n\in\bN}\bE \left[ \left| \langle \varphi, \mu_t^{(n)} \rangle \right|^{\alpha} \right]<\infty,
 \end{equation}
for some $\alpha>0$,
\item[(B)] There exists a countable  dense subset  $\{f_i\}_{i\in\bN}$ of $C_0(\bR)$, such that  there exist positive constants $\alpha$, $\beta$,
\begin{align}\label{eq-kol-bd}
	\bE \left[ \left| \langle f, \mu_t^{(n)} \rangle - \langle f, \mu_s^{(n)} \rangle \right|^{1+\alpha} \right] \le C_{f,T} |t-s|^{1+\beta}, \quad \forall t,s \in [0,T], \forall n \in \bN,
\end{align}
for all $f \in \{f_i\}_{i\ge0}$, where $C_{f,T}$ is a constant depending only on $f$ and $T$. 

 \end{enumerate}

\begin{lemma}\label{lem:suf-con}
 (A) $\Rightarrow$ (I); (A'')$\Rightarrow$ (A')$\Rightarrow (I')$;  (B)$\Rightarrow$(II).
\end{lemma}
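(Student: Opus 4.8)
The plan is to dispatch the three implications among (A), (A'), (A''), (I), (I') by Markov's inequality paired with the compact-set construction of Lemma \ref{Lemma-B3}, and to reserve the real work for (B)$\Rightarrow$(II), which I would settle via the Kolmogorov--Chentsov tightness criterion on $C([0,T],\bR)$.

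For (A)$\Rightarrow$(I), I would set $C := \sup_{n}\bE[\sup_{t\in[0,T]}\langle\varphi,\mu_t^{(n)}\rangle]$, which is finite by \eqref{e:uniform-bound}. Given $\e\in(0,1)$, I would take $M = C/\e$ and apply Markov's inequality to the nonnegative random variable $\sup_{t\in[0,T]}\langle\varphi,\mu_t^{(n)}\rangle$, obtaining $\bP(\sup_{t\in[0,T]}\langle\varphi,\mu_t^{(n)}\rangle > M)\le \e$ uniformly in $n$. Setting $K^\e := \{\mu\in\mathbf P(\bR): \int_{\bR}\varphi(x)\,\mu(dx) \le M\}$, which is compact by Lemma \ref{Lemma-B3}, I note that on the complementary event one has $\langle\varphi,\mu_t^{(n)}\rangle\le M$ for every $t$, i.e. $\mu_t^{(n)}\in K^\e$ for all $t$; hence $\bP(\mu_t^{(n)}\in K^\e,\ \forall t\in[0,T])\ge 1-\e$, which is exactly (I).

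For (A'')$\Rightarrow$(A')$\Rightarrow$(I'), I would fix $t\in[0,T]$. From (A'') with exponent $\alpha$, Markov's inequality applied to $|\langle\varphi,\mu_t^{(n)}\rangle|^{\alpha}$ produces, for each $\e$, an $M$ with $\bP(\langle\varphi,\mu_t^{(n)}\rangle > M)<\e$ for all $n$; since $\varphi\ge0$ this is precisely the tightness (A') of the real random variables $\langle\varphi,\mu_t^{(n)}\rangle$. Granting (A'), the same compact set $K^\e$ from Lemma \ref{Lemma-B3} gives $\bP(\mu_t^{(n)}\in K^\e)=\bP(\langle\varphi,\mu_t^{(n)}\rangle\le M)\ge 1-\e$, which is the tightness (I') of $\{\mu_t^{(n)}\}_n$ in $\mathbf P(\bR)$.

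The main obstacle is (B)$\Rightarrow$(II), where I would fix $f=f_i$ and write $X_t^{(n)}:=\langle f,\mu_t^{(n)}\rangle$. Because $f\in C_0(\bR)$ is bounded, $|X_t^{(n)}|\le\|f\|_{\infty}$, so the single-time family $\{X_0^{(n)}\}_n$ is automatically tight, living in the fixed interval $[-\|f\|_{\infty},\|f\|_{\infty}]$. Together with the uniform moment bound \eqref{eq-kol-bd}, namely $\bE[|X_t^{(n)}-X_s^{(n)}|^{1+\alpha}]\le C_{f,T}|t-s|^{1+\beta}$ in which the exponent $1+\beta$ exceeds $1$, these are exactly the two hypotheses of the Kolmogorov--Chentsov tightness criterion for $C([0,T],\bR)$ (e.g. \cite[Theorem 12.3]{Billingsley}), so the laws of $\{X^{(n)}\}_n$ form a tight family in $C([0,T],\bR)$; running this argument for every $f_i$ yields (II). The one point to watch is that the moment estimate alone suffices only once single-time tightness is in hand, and I expect boundedness of $f$ to supply the latter for free, so no additional estimate is required.
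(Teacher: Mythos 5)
Your proof is correct and follows essentially the same route as the paper: Markov's inequality together with the compact sets of Lemma \ref{Lemma-B3} handles the implications among (A), (A'), (A''), (I), (I'), and the Kolmogorov tightness criterion settles (B) $\Rightarrow$ (II). Your explicit check of single-time tightness via $|\langle f,\mu_t^{(n)}\rangle|\le\|f\|_{\infty}$ is a detail the paper delegates to the cited criterion (and your direct Markov step replaces the paper's appeal to Lemma \ref{Lemma-B3'} for (A'')$\Rightarrow$(A')), but the substance is identical.
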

\begin{proof}
By \eqref{e:uniform-bound}, let $M:=\sup_{n\in\bN}\bE\left[\sup_{t\in[0,T]} \langle \varphi,\mu_t^{(n)}\rangle\right]<\infty$. For  any $\e>0$, choose $M_\e=M/\e$. The set $K^\e:=\{\mu\in\mathbf P(\bR): \langle\varphi, \mu\rangle \le M_\e\}$ is a compact subset of $\mathbf P(\bR)$ by Lemma \ref{Lemma-B3}. Then, by Markov inequality we have, for $n\in\bN,$
\begin{align*}
&\mathbb P\left(\mu_t^{(n)}\notin K^\e, \text{ for some } t\in[0,T]\right)= \mathbb P\left(\sup_{t\in[0,T]}\langle \varphi, \mu_t^{(n)}\rangle >M_\e \right)\le M/M_\e=\e.
\end{align*}
 Thus, (A) $\Rightarrow$ (I).

(A'')$\Rightarrow$(A') follows directly from Lemma \ref{Lemma-B3'}. Now we show (A')$\Rightarrow$(I').
  Fix an arbitrary $t\in [0,T]$. For  any $\e>0$, due to the tightness of $\{\varphi, \mu_t^{(n)}\}_{n\in \bN}$, one can find a positive constant $N_\e$ such that   for all $n \in \bN$,
\begin{align*}
	\bP \left( \langle \varphi, \mu_{t}^{(n)} \rangle \le  N_\e \right) >1-\e.
\end{align*}
This implies that for all $n \in \bN$,
\[\bP \left(\mu_{t}^{(n)}\in C^\e\right) >1-\e,\]
where $C^\e=\{\nu\in \mathbf P(\bR): \langle \varphi, \nu\rangle\le N_\e\}$ is a compact subset of $\mathbf P(\bR)$ by Lemma \ref{Lemma-B3}. Therefore, $\{\mu_t^{(n)}\}_{n\in \bN}$ is tight, and hence (A')$\Rightarrow$(I').

Finally, (B)$\Rightarrow$(II) follows directly from the Kolomogorov tightness criterion (see, e.g., \cite[Theorem 4.2 and Theorem 4.3]{Ikeda1981}).
\end{proof}

The following is Jakubowski' tightness criterion for probability measures on $C([0,T], \mathbf P(\bR))$. 

\begin{theorem} \label{Thm-0}
 Assume that conditions (I) and (II) are satisfied. Then the set $\{ \mu_{t}^{(n)}, t \in [0,T] \}_{n \in \bN}$ induces a tight family of probability measures on $C([0,T], \mathbf P(\bR))$.
\end{theorem}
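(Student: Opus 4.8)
The plan is to deduce tightness from Prokhorov's theorem: since $C([0,T],\mathbf P(\bR))$ is Polish, it suffices to produce, for every $\e>0$, a single compact set $\mathfrak C\subset C([0,T],\mathbf P(\bR))$ with $\bP(\mu^{(n)}\in\mathfrak C)\ge 1-\e$ for all $n\in\bN$. The compact sets will be manufactured by Lemma \ref{lemma in Anderson}, which asserts that whenever $K$ is compact in $\mathbf P(\bR)$ and $\{C_i\}_{i\in\bN}$ are compact in $C([0,T],\bR)$, the set $\{\mu:\mu_t\in K\ \forall t\}\cap\bigcap_i\{t\mapsto\mu_t(f_i)\in C_i\}$ is compact. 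Conditions (I) and (II) are precisely tailored so that each of these two types of constraints fails with arbitrarily small probability, uniformly in $n$.

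First I would fix $\e>0$ and invoke condition (I) to select a compact $K=K^{\e/2}\subset\mathbf P(\bR)$ with $\bP(\mu_t^{(n)}\in K,\ \forall t\in[0,T])\ge 1-\e/2$ for all $n$. Next, for the countable dense family $\{f_i\}_{i\in\bN}\subset C_0(\bR)$ furnished by condition (II), I would use the tightness of $\{\langle f_i,\mu^{(n)}_\cdot\rangle\}_{n\in\bN}$ on $C([0,T],\bR)$ to choose, for each $i$, a compact set $C_i\subset C([0,T],\bR)$ with $\bP(\langle f_i,\mu^{(n)}_\cdot\rangle\in C_i)\ge 1-\e 2^{-i-1}$ for every $n$. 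The geometric allocation $\e 2^{-i-1}$ is the device that keeps the cumulative error from the infinitely many constraints summable.

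I would then take $\mathfrak C$ to be the set produced by Lemma \ref{lemma in Anderson} from this $K$ and these $C_i$; it is compact in $C([0,T],\mathbf P(\bR))$. Using the inclusion $\{\mu^{(n)}\notin\mathfrak C\}\subseteq\{\exists\,t:\mu_t^{(n)}\notin K\}\cup\bigcup_i\{\langle f_i,\mu^{(n)}_\cdot\rangle\notin C_i\}$, a union bound gives
\begin{align*}
\bP\!\left(\mu^{(n)}\notin\mathfrak C\right)
&\le \bP\!\left(\exists\,t\in[0,T]:\mu_t^{(n)}\notin K\right)
+\sum_{i=1}^\infty \bP\!\left(\langle f_i,\mu^{(n)}_\cdot\rangle\notin C_i\right)\\
&\le \frac{\e}{2}+\sum_{i=1}^\infty \e 2^{-i-1}
=\frac{\e}{2}+\frac{\e}{2}=\e,
\end{align*}
so that $\bP(\mu^{(n)}\in\mathfrak C)\ge 1-\e$ uniformly in $n$, and Prokhorov's theorem completes the proof.

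The genuinely substantive input is Lemma \ref{lemma in Anderson} (compactness of $\mathfrak C$, which rests on an Arzel\`a--Ascoli/diagonal extraction together with the fact that a measure on $\bR$ is determined by its pairings against a dense subset of $C_0(\bR)$), and I am free to quote it. Beyond that, the only points requiring care are the set-theoretic inclusion underlying the union bound and the bookkeeping of the $\e$-budget across the countably many coordinates $f_i$; both are routine once the compact set is in hand. I therefore do not anticipate a serious obstacle, since conditions (I) and (II) have been arranged to match exactly the two families of constraints appearing in Lemma \ref{lemma in Anderson}.
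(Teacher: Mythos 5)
Your proposal is correct and follows essentially the same argument as the paper's proof: compact sets from conditions (I) and (II) are combined via Lemma \ref{lemma in Anderson} into a compact subset of $C([0,T],\mathbf P(\bR))$, a union bound with a geometric $\e$-allocation gives a uniform probability bound, and tightness follows. The only difference is cosmetic bookkeeping of the $\e$-budget (the paper ends with $1-2\e$ rather than $1-\e$, which is immaterial since $\e$ is arbitrary).
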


\begin{proof} 
By condition (II), for any $\e > 0$, there exist compact subsets $ C_i^\e$ of $C([0,T], \bR)$ for $i\in\bN$, such that  for each $i\in\mathbb N$,   for all $n \in \bN$,
\begin{align*}
	\bP \left( \langle f_i, \mu_t^{(n)} \rangle \in C_i^\e \right) \ge 1 - \e/2^i,
\end{align*}
and hence,  for all $n \in \bN$,
\begin{align} \label{eq-0.3}
	\bP \left( \bigcap_{i \in \bN} \left\{ \langle  f_i, \mu_t^{(n)} \rangle \in C_i^\e \right\} \right)
	\ge 1 - \sum_{i \in\bN} \bP \left( \left\{  \langle f, \mu_t^{(n)} \rangle \in C_i^\e \right\}^\complement \right)
	\ge 1 - \e.
\end{align}

By Lemma \ref{lemma in Anderson}, the set 
\begin{align*}
	\mathfrak C(\e) =  \Big\{ \mu\in C([0,T], \mathbf P(\bR)): \mu_{t} \in K^\e, \forall t\in [0,t]\Big\} \cap \bigcap_{i \ge 0} \left\{ t \rightarrow  \langle f_i, \mu_t\rangle\in C_i^\e \right\},
\end{align*}
is compact in $C([0,T], \mathbf P(\bR))$ for any $\e > 0$. By condition (I) and \eqref{eq-0.3}, we have for all $n \in \bN$,
\begin{align*}
	\bP \left( \mu^{(n)} \in \mathfrak C(\e)\right) \ge 1-2\e.
\end{align*}
This  implies the tightness of $\{\mu^{(n)}\}_{n \in \bN}$  on $C([0,T], \mathbf P(\bR))$. The proof is concluded. 
\end{proof}

\begin{remark}\label{remarkB2}
Following the proof of \cite[Theorem 3.1]{jaku}, one can easily show that conditions (I) and (II)  are also  necessary conditions for  tightness of probability measures on $C([0,T], \mathbf{P} (\bR)).$

\end{remark}

The criterion in Theorem \ref{Thm-0} can be verified by computing moments:

\begin{proposition} \label{Thm-1}
 Assume that conditions (A) and  (B) are satisfied. Then the set $\{\mu_t^{(n)}, t \in [0,T]\}_{n \in \bN}$ induces a tight family of probability measures on $C([0,T], \mathbf P(\bR))$.
\end{proposition}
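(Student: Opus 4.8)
The plan is to reduce the statement to Theorem \ref{Thm-0}, which already delivers tightness on $C([0,T], \mathbf P(\bR))$ as soon as the structural conditions (I) and (II) are in force. Since Lemma \ref{lem:suf-con} records exactly the implications (A) $\Rightarrow$ (I) and (B) $\Rightarrow$ (II), the proposition is essentially a corollary: I would verify the hypotheses of Lemma \ref{lem:suf-con} under the standing assumptions (A) and (B), extract conditions (I) and (II), and then invoke Theorem \ref{Thm-0} directly.

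In more detail, from assumption (A), that is, the uniform bound \eqref{e:uniform-bound}, the implication (A) $\Rightarrow$ (I) in Lemma \ref{lem:suf-con} supplies, for each $\e > 0$, a compact set $K^\e \subset \mathbf P(\bR)$ of the level-set form provided by Lemma \ref{Lemma-B3}, such that $\bP(\mu_t^{(n)} \in K^\e,\ \forall t \in [0,T]) \ge 1 - \e$ uniformly in $n$; this is precisely condition (I). The underlying argument is a one-line Markov inequality applied to $\sup_{t \in [0,T]} \langle \varphi, \mu_t^{(n)} \rangle$, combined with the compactness of $\{\nu \in \mathbf P(\bR) : \langle \varphi, \nu \rangle \le M_\e\}$ in $\mathbf P(\bR)$.

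From assumption (B), the moment estimate \eqref{eq-kol-bd} holds for every $f$ in a countable dense subset $\{f_i\}_{i \in \bN}$ of $C_0(\bR)$, so the Kolmogorov tightness criterion shows that each real-valued process $\{\langle f_i, \mu_t^{(n)} \rangle, t \in [0,T]\}_{n \in \bN}$ is tight on $C([0,T], \bR)$; this is exactly condition (II), and so the implication (B) $\Rightarrow$ (II) in Lemma \ref{lem:suf-con} applies verbatim. With (I) and (II) both established, Theorem \ref{Thm-0} immediately yields the tightness of $\{\mu^{(n)}\}_{n \in \bN}$ on $C([0,T], \mathbf P(\bR))$.

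I do not expect any genuine obstacle here. All of the analytic content — the construction of the compact level sets via Lemma \ref{Lemma-B3}, the Markov-inequality estimate behind (A) $\Rightarrow$ (I), and the Kolmogorov criterion behind (B) $\Rightarrow$ (II) — has already been absorbed into Lemma \ref{lem:suf-con} and Theorem \ref{Thm-0}. The only remaining task is to assemble these ingredients in the right logical order, so the proof is short and purely a matter of citing the preceding results.
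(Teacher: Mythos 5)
Your proposal is correct and matches the paper's proof exactly: the paper also derives Proposition \ref{Thm-1} directly from Theorem \ref{Thm-0} combined with the implications (A) $\Rightarrow$ (I) and (B) $\Rightarrow$ (II) of Lemma \ref{lem:suf-con}. The extra detail you supply simply unfolds the content of Lemma \ref{lem:suf-con}, which the paper treats as already established.
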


\begin{proof}
The desired result follows directly from Theorem \ref{Thm-0} and Lemma \ref{lem:suf-con}.
\end{proof}

In general situations,  it might not be easy to check  condition (I) or  (A).  Below, we provide another tightness criterion which weakens condition (I).

\begin{theorem} \label{Thm-0'}
 Assume conditions (I') and (II) are satisfied. 
Then the set $\{ \mu_{t}^{(n)}, t \in [0,T] \}_{n \in \bN}$ induces a tight family of probability measures on $C([0,T], \mathbf P(\bR))$.
\end{theorem}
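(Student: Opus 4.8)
The plan is to prove tightness by first working in the larger Polish space $C([0,T],\tilde{\mathbf P}(\bR))$ of sub-probability-measure-valued paths (whose fibre $\tilde{\mathbf P}(\bR)$ carries the vague topology), where condition (II) alone suffices, and then to use condition (I') to show that every limit point actually lives in $C([0,T],\mathbf P(\bR))$. First, from (II) I would argue exactly as in the proof of Theorem \ref{Thm-0}: for each $\e>0$ choose compact sets $C_i^\e\subset C([0,T],\bR)$ with $\bP(\langle f_i,\mu^{(n)}_\cdot\rangle\in C_i^\e)\ge 1-\e 2^{-i}$ for all $n$, so that $\bP(\langle f_i,\mu^{(n)}_\cdot\rangle\in C_i^\e\ \forall i)\ge 1-\e$. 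By Lemma \ref{lem:compact} the set $\mathcal K^\e=\bigcap_i\{\mu: t\mapsto\mu_t(f_i)\in C_i^\e\}$ is compact in $C([0,T],\tilde{\mathbf P}(\bR))$, and $\bP(\mu^{(n)}\in\mathcal K^\e)\ge 1-\e$ for all $n$. Hence $\{\mu^{(n)}\}_{n}$ is tight in $C([0,T],\tilde{\mathbf P}(\bR))$; by Prokhorov it is relatively compact there, and any subsequence admits a further subsequence with $\mu^{(n_k)}\Rightarrow\mu$ for some $C([0,T],\tilde{\mathbf P}(\bR))$-valued limit $\mu$.

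Next I would show that $\mu_t\in\mathbf P(\bR)$ for each fixed $t$, almost surely. The evaluation $\nu\mapsto\nu_t$ is continuous on $C([0,T],\tilde{\mathbf P}(\bR))$, so $\mu^{(n_k)}_t\Rightarrow\mu_t$ in $\tilde{\mathbf P}(\bR)$. By (I') there is, for each $\e$, a compact $\mathcal C^\e_t\subset\mathbf P(\bR)$ with $\bP(\mu^{(n)}_t\in\mathcal C^\e_t)\ge 1-\e$; since the inclusion $\mathbf P(\bR)\hookrightarrow\tilde{\mathbf P}(\bR)$ is a continuous injection, $\mathcal C^\e_t$ is compact, hence closed, in $\tilde{\mathbf P}(\bR)$, and the Portmanteau theorem gives $\bP(\mu_t\in\mathcal C^\e_t)\ge\limsup_k\bP(\mu^{(n_k)}_t\in\mathcal C^\e_t)\ge 1-\e$. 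Letting $\e\downarrow 0$ yields $\mu_t\in\mathbf P(\bR)$ a.s.; choosing a countable dense $D\subset[0,T]$, almost surely $\mu_s\in\mathbf P(\bR)$ for all $s\in D$ simultaneously.

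The main obstacle is to upgrade this to ``$\mu_t\in\mathbf P(\bR)$ for all $t\in[0,T]$ simultaneously'', that is, to rule out an instantaneous loss of mass at an exceptional (random) time: the total-mass functional is only lower semicontinuous for the vague topology and $C_0$-test functions are blind to mass escaping to infinity, so fullness on the dense set $D$ together with vague continuity is not by itself enough. To close this I would exploit the equicontinuity hidden in (II) --- compact subsets of $C([0,T],\bR)$ are uniformly equicontinuous --- together with the compactness of $[0,T]$. Concretely, for cut-offs $\varphi_R\in C_c(\bR)$ with $\mathbf 1_{[-R,R]}\le\varphi_R\le\mathbf 1_{[-R-1,R+1]}$, condition (I') furnishes at each $s$ a deterministic radius $R_s$ making $\langle 1-\varphi_{R_s},\mu^{(n)}_s\rangle$ small uniformly in $n$ on a high-probability event, while the equicontinuity of $t\mapsto\langle\varphi_{R_s},\mu^{(n)}_t\rangle$ (uniform in $n$) propagates this bound to a neighbourhood of $s$; a finite subcover of $[0,T]$ by such neighbourhoods then produces a single radius controlling the tails of $\mu^{(n)}_t$ uniformly in $t$ (this is essentially condition (I)), forcing $\mu_t\in\mathbf P(\bR)$ for every $t$ a.s. The delicate point I expect to wrestle with is the interplay between $R_s$ and the modulus of continuity --- enlarging the radius degrades the available modulus --- so the finite cover and the radius must be chosen consistently, which is exactly where the compactness of $[0,T]$ is essential.

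Finally I would transfer tightness back to $C([0,T],\mathbf P(\bR))$. Using a Skorokhod representation for $\mu^{(n_k)}\Rightarrow\mu$ in $C([0,T],\tilde{\mathbf P}(\bR))$, the paths $\mu^{(n_k)}$ and the limit $\mu$ are all probability-measure-valued, and for probability measures vague convergence to a probability limit coincides with weak convergence; hence the convergence actually takes place in $C([0,T],\mathbf P(\bR))$ almost surely, so $\mu^{(n_k)}\Rightarrow\mu$ there. Thus every subsequence of $\{\mu^{(n)}\}$ has a further subsequence converging weakly in $C([0,T],\mathbf P(\bR))$, which yields relative compactness and, by Prokhorov, the asserted tightness.
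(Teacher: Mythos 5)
Your first half is exactly the paper's argument: from (II) and Lemma \ref{lem:compact} you get tightness of $\{\mu^{(n)}\}$ in $C([0,T],\tilde{\mathbf P}(\bR))$, extract a weakly convergent subsequence with limit $\nu$, and use (I') together with a Portmanteau/closedness argument to conclude that $\nu_t\in\mathbf P(\bR)$ almost surely \emph{for each fixed} $t$. You then flag, correctly, that this is strictly weaker than ``almost surely, $\nu_t\in\mathbf P(\bR)$ for \emph{all} $t$ simultaneously'', because total mass is only lower semicontinuous along vaguely continuous paths and the exceptional time may be random; the paper's own proof passes over this point silently (it jumps from ``$\nu_t\in\mathbf P(\bR)$'' to ``hence $\nu\in C([0,T],\mathbf P(\bR))$''). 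Up to this point your proposal is, if anything, more careful than the printed proof.

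The genuine gap is in your proposed repair, and it cannot be closed. Your plan is to upgrade (I') to the uniform compact-containment condition (I) using cut-offs $\varphi_{R_s}$, the equicontinuity hidden in (II), and a finite subcover of $[0,T]$. The tension you yourself identify is fatal: the neighbourhood of $s$ on which the tail bound propagates is governed by the modulus of the test function approximating $\varphi_{R_s}$, which degrades as $R_s$ grows, while $R_s$ must grow as the per-point probability budget $\e'$ shrinks; the resulting subcover size $m(\e')$ grows so fast that $m(\e')\,\e'$ need not tend to $0$. This is not a technical nuisance but an obstruction: (I') and (II) together do \emph{not} imply (I), and indeed the asserted conclusion can fail. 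Take $T=1$, $U$ uniform on $[0,1]$, and $\mu^{(n)}_t=\frac12\delta_0+\frac12\delta_{g_n(t)}$ with $g_n(t)=\min\{|t-U|^{-1},n\}$; each $\mu^{(n)}$ is a continuous $\mathbf P(\bR)$-valued path. Condition (I') holds: for fixed $t$, on the event $\{|t-U|\ge 1/R\}$, which has probability at least $1-2/R$, every $\mu^{(n)}_t$ lies in the compact set of probability measures supported on $[-R,R]$. Condition (II) holds: $\langle f,\mu^{(n)}_t\rangle=\frac12 f(0)+\frac12 f(g_n(t))$ satisfies the deterministic bound $\sup_t|f(g_n(t))-f(|t-U|^{-1})|\le 2\sup_{|x|\ge n}|f(x)|\to 0$, so each sequence $\{\langle f_i,\mu^{(n)}_\cdot\rangle\}_n$ converges in $C([0,1],\bR)$, hence is tight. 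Yet half of the mass of $\mu^{(n)}_U$ sits at the point $n$; since every compact $\mathcal C\subset C([0,1],\mathbf P(\bR))$ has all its path values in one uniformly tight subset of $\mathbf P(\bR)$, one gets $\bP(\mu^{(n)}\in\mathcal C)=0$ for all large $n$, so the family is not tight on $C([0,1],\mathbf P(\bR))$. In other words, the instantaneous loss of mass at a random time that you set out to exclude can actually occur under (I') and (II); the step you call the ``main obstacle'' is precisely where any proof must break down, and the same example shows that the paper's own proof of Theorem \ref{Thm-0'} is incomplete at that very step (only the stronger hypothesis (I), as in Theorem \ref{Thm-0}, supports the conclusion).
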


\begin{proof} 


By condition (II), we can choose the same compact subsets $C_i^\e$ of $C([0,T], \bR)$  for $i\in\bN$ as in the proof of Theorem \ref{Thm-0}, and hence \eqref{eq-0.3} still holds.  By Lemma \ref{lem:compact}, the set 
\begin{align*}
	\mathcal K(\e) =   \bigcap_{i \in \bN} \left\{ t \rightarrow  \langle f_i, \mu_t\rangle\in C_i^\e \right\},
\end{align*}
is compact in $C([0,T], \tilde{\mathbf P}(\bR))$ for any $\e > 0$. By \eqref{eq-0.3}, we have, for all $n \in \bN$,
\begin{align*}
	\bP \left( \mu^{(n)} \in \mathcal K(\e)\right) \ge 1-\e.
\end{align*}

 This  implies the tightness of $\{\mu^{(n)}\}_{n\in\bN}$ on $C([0,T], \tilde{\mathbf P}(\bR))$.  Therefore,   for any subsequence of $\mu^{(n)}$, by Prokhorov's theorem, there exists a subsequence $\mu^{(n_k)}$ which converge weakly to some $\nu=\{\nu_t, t\in[0,T]\}\in C([0,T], \tilde{\mathbf P}(\bR))$ which is a continuous sub-probability-measure-valued process. Thus, for each $t\in[0,T]$,  the sequence $\mu_t^{(n_k)}$ (as  $\tilde{\mathbf P}(\bR)$-valued random elements) converges weakly to $\nu_t\in\tilde{\mathbf P}(\bR)$.  This together with the tightness of $\{\mu^{(n)}_t\}_{n\in\bN}$ (as $\mathbf P(\bR)$-valued random elements) in condition (I') implies that $\nu_t\in \mathbf P(\bR)$ and hence $\nu\in C([0,T], \mathbf P(\bR))$. Therefore, Prokhorov's theorem implies that $\{\mu^{(n)}\}_{n\in\bN}$ is tight on $C([0,T], \mathbf P(\bR))$. The proof is concluded. 
\end{proof}

\begin{remark}\label{remarkB2'}
Noting that condition (I) implies condition (I'), then by Remark \ref{remarkB2}, conditions (I') and (II) are also necessary conditions for  tightness of probability measures on $C([0,T], \mathbf{P} (\bR))$. 
\end{remark}

Similarly, we can justify the criterion in Theorem \ref{Thm-0'} by computing moments. The two Propositions below are direct consequences of Theorem \ref{Thm-0'} and Lemma \ref{lem:suf-con}. 
\begin{proposition} \label{Thm-1'}
 Assume that conditions (A') and (B) are satisfied. Then the set $\{\mu_t^{(n)}, t \in [0,T]\}_{n \in \bN}$ induces a tight family of probability measures on $C([0,T], \mathbf P(\bR))$.
\end{proposition}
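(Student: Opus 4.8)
The plan is to recognize this statement as a direct corollary of Theorem \ref{Thm-0'} combined with the implications recorded in Lemma \ref{lem:suf-con}. The entire task reduces to feeding the two hypotheses (A') and (B) through the chain of implications that produces the abstract conditions (I') and (II) required by Theorem \ref{Thm-0'}; no new estimate needs to be proved at this level.

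First I would invoke the middle implication of Lemma \ref{lem:suf-con}, namely (A') $\Rightarrow$ (I'). This guarantees that for each fixed $t \in [0,T]$ the family $\{\mu_t^{(n)}\}_{n\in\bN}$ of $\mathbf P(\bR)$-valued random elements is tight. The mechanism underlying this step is the Markov inequality applied to the real random variables $\langle \varphi, \mu_t^{(n)}\rangle$, together with the compactness of the sublevel sets $\{\nu\in\mathbf P(\bR) : \langle \varphi, \nu\rangle \le N_\e\}$ supplied by Lemma \ref{Lemma-B3}; tightness of $\{\langle \varphi, \mu_t^{(n)}\rangle\}_{n\in\bN}$ thereby transfers to tightness of the measures themselves.

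Next I would apply the last implication of Lemma \ref{lem:suf-con}, namely (B) $\Rightarrow$ (II), which rests on the Kolmogorov tightness criterion: the moment bound \eqref{eq-kol-bd}, with exponent $1+\alpha$ on the increment and $1+\beta$ on the time separation, is exactly the Kolmogorov--Chentsov hypothesis ensuring that, for each $f_i$ in the chosen dense subset of $C_0(\bR)$, the real-valued processes $\{\langle f_i, \mu_t^{(n)}\rangle\}_{n\in\bN}$ form a tight family on $C([0,T],\bR)$. With both (I') and (II) now in hand, Theorem \ref{Thm-0'} applies verbatim and yields the tightness of $\{\mu^{(n)}\}_{n\in\bN}$ on $C([0,T],\mathbf P(\bR))$, which completes the argument.

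Since every ingredient has already been established upstream, there is no genuine obstacle at the level of this proposition; the only point requiring care is to confirm that the exponents in (B) match the form of the Kolmogorov criterion consumed inside Lemma \ref{lem:suf-con}. The substantive content lives earlier — in the vague-topology compactness of Lemma \ref{lem:compact} and in the crucial upgrade performed inside Theorem \ref{Thm-0'} from a limiting sub-probability-measure-valued process to a genuine probability-measure-valued one, which is precisely the place where condition (I') is used.
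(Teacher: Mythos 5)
Your proposal is correct and follows exactly the paper's route: the paper derives Proposition \ref{Thm-1'} as an immediate consequence of Theorem \ref{Thm-0'} together with the implications (A') $\Rightarrow$ (I') and (B) $\Rightarrow$ (II) from Lemma \ref{lem:suf-con}. Your additional commentary on the mechanisms inside those implications (Markov inequality with Lemma \ref{Lemma-B3}, and the Kolmogorov criterion) accurately reflects how the paper proves Lemma \ref{lem:suf-con}, so nothing is missing.
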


\begin{proposition} \label{remark:b1}
 Assume that conditions (A'') and (B) are satisfied. Then the set $\{\mu_t^{(n)}, t \in [0,T]\}_{n \in \bN}$ induces a tight family of probability measures on $C([0,T], \mathbf P(\bR))$.
\end{proposition}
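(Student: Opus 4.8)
The plan is to obtain Proposition \ref{remark:b1} as a direct corollary, by chaining the implications already established in Lemma \ref{lem:suf-con} with the abstract tightness criterion of Theorem \ref{Thm-0'}. The strategy is to translate the two moment-type hypotheses (A'') and (B) into the functional-analytic conditions (I') and (II), and then invoke Theorem \ref{Thm-0'}.

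First I would dispatch condition (A'') using Lemma \ref{lem:suf-con}, which records the implications (A'') $\Rightarrow$ (A') $\Rightarrow$ (I'). Concretely, for each fixed $t \in [0,T]$, the uniform bound \eqref{e:B4} on the $\alpha$-th moment of $\langle \varphi, \mu_t^{(n)} \rangle$ forces tightness of the real-valued random variables $\{\langle \varphi, \mu_t^{(n)} \rangle\}_{n \in \bN}$ via Markov's inequality together with Lemma \ref{Lemma-B3'}; this in turn yields tightness of the $\mathbf P(\bR)$-valued random elements $\{\mu_t^{(n)}\}_{n \in \bN}$, since the sublevel sets $\{\nu : \langle \varphi, \nu \rangle \le N_\e\}$ are compact in $\mathbf P(\bR)$ by Lemma \ref{Lemma-B3}. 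This is precisely condition (I').

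Next I would handle condition (B) in the same manner: by Lemma \ref{lem:suf-con}, (B) $\Rightarrow$ (II), the implication being a consequence of the Kolmogorov tightness criterion applied to each of the scalar processes $t \mapsto \langle f_i, \mu_t^{(n)} \rangle$ on $C([0,T], \bR)$. With conditions (I') and (II) now both in force, Theorem \ref{Thm-0'} applies verbatim and gives the tightness of $\{\mu^{(n)}\}_{n \in \bN}$ on $C([0,T], \mathbf P(\bR))$, which completes the proof.

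Since all of the analytic content has been isolated into Lemma \ref{lem:suf-con} and Theorem \ref{Thm-0'}, there is no genuine obstacle in the argument itself; the only point requiring care is that the quantifiers and the choice of the countable dense family $\{f_i\}_{i \in \bN} \subset C_0(\bR)$ appearing in (B) and in (II) are matched, so that the same family feeds both the Kolmogorov step and the construction of the compact sets $C_i^\e$ inside the proof of Theorem \ref{Thm-0'}. This is automatic, because (B) already postulates exactly such a dense family, which is precisely the one demanded by (II).
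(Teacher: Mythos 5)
Your proof is correct and follows exactly the paper's own route: the paper likewise derives Proposition \ref{remark:b1} as a direct consequence of Lemma \ref{lem:suf-con} (via the chain (A'') $\Rightarrow$ (A') $\Rightarrow$ (I') and (B) $\Rightarrow$ (II)) combined with Theorem \ref{Thm-0'}. Your additional remarks unpacking the implications and the matching of the dense family $\{f_i\}_{i\in\bN}$ are consistent with the paper's argument and raise no issues.
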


%
%

\medskip

{\bf Acknowledgment:}
We would like to thank Rongfeng Sun for reminding us of the Jakubowski’s tightness criterion. J. Yao is partially supported by HKSAR-RGC-Grant GRF-17307319.

\bibliographystyle{alphaabbr}
\bibliography{Reference,WishartProcess}

\end{document}